\theoremstyle{definition}
\newtheorem{definition}{Definition}
\newtheorem{theorem}{Theorem}
\newtheorem{remark}{Remark}
\newtheorem{proposition}{Proposition}
\newtheorem{corollary}{Corollary}
\newcommand{\N}{\mathbb{N}}
\newcommand{\C}{\mathbb{C}}
\newcommand{\R}{\mathbb{R}}
\newcommand{\Z}{\mathbb{Z}}
\newcommand{\HH}{\mathbb{H}}
\newcommand{\Ad}{\operatorname{Ad}}
\newcommand{\Ker}{\operatorname{Ker}}
\newcommand{\Sp}{\operatorname{Sp}}
\newcommand{\SU}{\operatorname{SU}}
\newcommand{\SO}{\operatorname{SO}}
\newcommand{\UU}{\operatorname{U}}
\newcommand{\Spin}{\operatorname{Spin}}
\newcommand{\Sphere}{\mathbb{S}}
\newcommand{\Toro}{\operatorname{T}}
\title{\textsc{Matrix spherical analysis on nilmanifolds}}
\author{Rocío Díaz Martín and Linda Saal} 
\date{}
\begin{document} 
\maketitle
%\thispagestyle{empty}

%\begin{center}
\textsc{Abstract.} 
Given a nilpotent Lie group $N$, a compact subgroup $K$ of automorphisms of $N$ and an irreducible unitary representation $(\tau,W_\tau)$ of $K$, we study conditions on $\tau$ for the commutativity of the algebra of $\mathrm{End}(W_\tau)$-valued integrable functions on $N$, with an additional property that generalizes the notion of $K$-invariance. A necessary condition, proved by F. Ricci and A. Samanta, is that $(K\ltimes N,K)$ must be a Gelfand pair. In this article we determine all the commutative algebras from a particular class of Gelfand pairs constructed by J. Lauret. 
%\end{center}
\\

%%%%%%%%%%%%%%%%%%%Acknowledgments

\pagestyle{headings}
\pagenumbering{arabic}

\section{Introduction}

\quad Let $N$ be a connected  and simply connected nilpotent Lie group endowed with a left-invariant Riemannian metric determined by an inner product $\langle\cdot,\cdot\rangle$ on its Lie algebra $\mathfrak{n}$. The Riemannian manifold $(N,\langle\cdot,\cdot\rangle)$ is said to be a \textit{homogeneous nilmanifold} (cf. \cite{Lauret nil}). Its isometry group  is given by the semidirect product $K\ltimes N$, where $K$ is  the group of orthogonal automorphisms of $N$ (cf. \cite{Wil}). The action of $K$ on $N$ will be denoted by $k\cdot x$.  We say that $(K,N)$ (or $(K\ltimes N,K)$) is a \textit{Gelfand pair} when the convolution algebra of $K$-invariant integrable functions on $N$ is commutative. Let $(\tau, W_\tau)$ be an %complex 
 irreducible unitary representation of $K$. We say that  $(K,N,\tau)$ is a \textit{commutative triple} when the convolution algebra of $\mathrm{End}(W_\tau)$-valued integrable functions on $N$ such that 
$F(k\cdot x)=\tau(k)F(x)\tau(k)^{-1}$ (for all $k\in K$  and  $x\in N$) is commutative. Finally, we say that $(K,N)$ is a \textit{strong Gelfand pair} if $(K,N,\tau)$ is commutative for every $\tau\in\widehat{K}$ (where $\widehat{K}$ denotes the set of equivalence classes of %-complex- 
irreducible unitary representations of $K$).
It is shown in \cite{Fulvio} that if there exists $\tau \in\widehat{K}$ such that $(K,N,\tau)$ is a commutative triple, then $(K,N)$ is a Gelfand pair. 
\\

Starting from a faithful real representation $V$ of a compact Lie algebra $\mathfrak{g}$, J. Lauret  constructs in  \cite{Lauret} a two-step nilpotent Lie group $N(\mathfrak{g},V)$  and  he gives a full classification of the Gelfand pairs $(K,N(\mathfrak{g},V))$. The aim of this article is to determine the nontrivial commutative triples occurring for the indecomposable Gelfand pairs given there. We will restrict our attention to the groups $N(\mathfrak{g},V)$ having square integrable representations. In these cases we will see that it is possible to reduce the problem to studying the commutativity of triples $(K',H,\tau_{|_{K'}})$, where $H$ is a Heisenberg group and $K'$ is a subgroup of the orthogonal automorphisms of $H$ contained in $K$ and $\tau_{|_{K'}}$ denotes the restriction of $\tau$ to $K'$. 
\\

The main goal of this paper will be to prove the following theorem.
\begin{theorem}\label{teo 1}
Let $(K,N(\mathfrak{g},V))$ be an indecomposable Gelfand pair such that $N(\mathfrak{g},V)$ has a square integrable representation. The complete list of commutative triples $(K,N(\mathfrak{g},V),\tau)$ with $\tau\in \widehat{K}$ is the following:

\begin{itemize}

\item $\left(\SU(2)\times \Sp(n), N(\mathfrak{su}(2),(\mathbb{C}^2)^n), \tau \right), \ n\geq 1$ (Heisenberg-type), where $\mathbb{C}^2$ denotes the standard representation of $\mathfrak{su}(2)$ regarded as a real representation  (and $\mathfrak{su}(2)$ acts on $(\C^2)^n$ component-wise) and $\tau\in\widehat{\SU(2)}$ or $\tau\in\widehat{\Sp(n)}$ with highest weight  associated to a constant partition of length at most $n$.

\item $(\SU(n)\times \Sphere^1, N(\mathfrak{su}(n),\mathbb{C}^n), \tau), \ n\geq 3$, where $\mathbb{C}^n$ denotes the standard representation of $\mathfrak{su}(n)$ regarded as a real representation and $\tau$ is a character of $\Sphere^1$.

\item $(\SU(n)\times \Sphere^1, N(\mathfrak{u}(n),\mathbb{C}^n), \tau), \ n\geq 3$, where $\mathbb{C}^n$ denotes the standard representation of $\mathfrak{u}(n)$ regarded as a real representation and $\tau$ is a character of $\Sphere^1$.

\item $(\SU(2)\times \UU(k)\times \Sp(n), N(\mathfrak{u}(2),(\mathbb{C}^2)^k\oplus(\mathbb{C}^2)^n),\tau), \ k\geq 1, n\geq 0$, where the center of $\mathfrak{u}(2)$ acts non-trivially only on $(\mathbb{C}^2)^k$, $(\mathbb{C}^2)^n$ denotes the representation of $\mathfrak{su}(2)$ stated in the first item  and $\mathfrak{u}(2)$ acts component-wise on $(\mathbb{C}^2)^k$ in the standard way regarded as a real representation and $\tau\in \widehat{\UU(k)}$.

\item $(K,N(\mathfrak{g},V),\tau)$ where:
\begin{itemize}
\item $\mathfrak{g}:=\mathfrak{su}(m_1)\oplus...\oplus\mathfrak{su}(m_\beta)\oplus\mathfrak{su}(2)\oplus...\oplus\mathfrak{su}(2)\oplus \mathfrak{c}$, where  there are $\alpha$  copies of $\mathfrak{su}(2)$, $m_i\geq 3$ for all $1\leq i\leq \beta$ and $\mathfrak{c}$ is an abelian component.
\item $V:=\C^{m_1}\oplus...\oplus \C^{m_\beta}\oplus\C^{2k_1+2n_1}\oplus...\oplus\C^{2k_\alpha+2n_\alpha}$, where $k_j\geq 1$ and $n_j\geq 0$ for all $1\leq j\leq \alpha$.
\item $\mathfrak{g}$ is acting on $V$ as follows: For each $1\leq i\leq \beta+\alpha$, $\mathfrak{c}$ has a maximal subspace, denoted by $\mathfrak{c}_i$ and with $\dim(\mathfrak{c}_i)=1$,  acting non-trivially only on one component of $V$. For $1\leq i\leq \beta$,
 $\mathfrak{su}(m_i)\oplus \mathfrak{c}_i$  acts non-trivially only on  $\C^{m_i}$  and for $\beta+1<i\leq \beta+\alpha $, $\mathfrak{su}(2)\oplus\mathfrak{c}_i$  acts non-trivially only on $\C^{2k_i+2n_i}$.
\item $K=G\times U$ where 
\begin{itemize}
\item $G:= \SU(m_1)\times...\SU(m_\beta)\times \SU(2)\times...\times \SU(2)$, with $\alpha$ copies of  $\SU(2)$ and 
\item $U:=\Sphere^1\times...\times \Sphere^1\times \UU(k_1)\times \Sp(n_1)\times...\times \UU(k_\alpha)\times \Sp(n_\alpha)$, with $\beta$ copies of  $\Sphere^1$.
\end{itemize}
\item $\tau\in\widehat{\Sphere^1}\otimes...\otimes \widehat{\Sphere^1}\otimes  \widehat{\UU(k_1)}\otimes...\otimes \widehat{\UU(k_\alpha)}$.
\end{itemize}
% $(\SU(m_1)\times ...\times \SU(m_r)\times U_1\times...\times U_r, N(\mathfrak{su}(m_1)\oplus...\oplus\mathfrak{su}(m_r)\oplus\mathfrak{c}, V_1\oplus ... \oplus V_r))$, where $\mathfrak{c}$ is an abelian component of dimension $1 \leq \dim(\mathfrak{c})< r$; for $m_i\geq 3$,  $V_i=\C^{m_i}$ and $U_i=\Sphere^1$ and for $m_i=2$,  $V_i=(\C^2)^{k_i}\times (\C^2)^{n_i}$ and $U_i=\UU(k_i)\times Sp(n_i)$.

\item $(\UU(n), N(\R,\mathbb{C}^n)), \ n\geq 1$ (Heisenberg group) is a strong Gelfand pair (proved in \cite{Yakimova}).
\end{itemize}
\end{theorem}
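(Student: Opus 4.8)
The overall plan is to reduce the classification to a multiplicity-freeness question on a Heisenberg group and then to settle that question case by case. \textbf{First}, by \cite{Fulvio} a triple $(K,N(\mathfrak{g},V),\tau)$ can be commutative only when $(K,N(\mathfrak{g},V))$ is a Gelfand pair, so I start from Lauret's classification \cite{Lauret} of indecomposable Gelfand pairs and keep only those for which $N(\mathfrak{g},V)$ has a square integrable representation, i.e.\ for which the alternating form $B_\lambda(v,w)=\langle\lambda,[v,w]\rangle$ on $V$ is nondegenerate for $\lambda$ in a dense open set $\mathfrak{g}^\ast_{\mathrm{reg}}\subseteq\mathfrak{g}^\ast$. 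This is a condition on the moment map of each indecomposable summand of $V$, and it leaves exactly the pairs appearing in the statement: the Heisenberg group $N(\R,\C^n)$, the Heisenberg-type group $N(\mathfrak{su}(2),(\C^2)^n)$, the pairs built from $\mathfrak{su}(m)$, $\mathfrak{u}(m)$ and $\mathfrak{u}(2)$, and the ``linked'' families obtained by gluing such pieces along a common abelian component. For each of them it remains to describe $\{\tau\in\widehat K:(K,N(\mathfrak{g},V),\tau)\ \text{is commutative}\}$.

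\textbf{Second}, I prove the reduction announced in the introduction. Commutativity of $(K,N,\tau)$ is equivalent --- by the standard reformulation of this property (see \cite{Fulvio}) --- to the multiplicity-freeness of the representation of $K\ltimes N$ attached to $\tau$ and to the left regular representation of $N$. Square integrability forces the Plancherel decomposition of $L^2(N)$ to be carried by the family $\{\pi_\lambda:\lambda\in\mathfrak{g}^\ast_{\mathrm{reg}}\}$, and each $\pi_\lambda$ factors through the Heisenberg group $H_\lambda$ determined by the complex structure $J_\lambda$ associated with $B_\lambda$. Decomposing that representation according to the $K$-orbits in $\mathfrak{g}^\ast_{\mathrm{reg}}$ and applying Mackey's subgroup analysis, the part lying over a generic orbit $K/K_\lambda$ is a representation of $K\ltimes N$ induced from $K_\lambda\ltimes N$; since $\pi_\lambda$ is irreducible and realizes, as a $K_\lambda$-module, the Fock (polynomial) module $\mathcal{P}(V,J_\lambda)$ up to a character twist, this induced representation is multiplicity free precisely when the $K_\lambda$-module $\mathcal{P}(V,J_\lambda)\otimes (W_\tau)_{|_{K_\lambda}}$ is multiplicity free --- equivalently, when $(K_\lambda,H_\lambda,\tau_{|_{K_\lambda}})$ is a commutative triple, understood in the (possibly reducible) sense appropriate to a Heisenberg group. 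One must take care to work with the principal orbit type and to check that $\lambda\mapsto(K_\lambda,V_\lambda)$ is essentially constant on $\mathfrak{g}^\ast_{\mathrm{reg}}$.

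\textbf{Third}, for each pair from the first step I compute $K_\lambda$ and the complex $K_\lambda$-module $V_\lambda=(V,J_\lambda)$ for generic $\lambda$ and decide when $\mathcal{P}(V_\lambda)\otimes (W_\tau)_{|_{K_\lambda}}$ is multiplicity free. For the $\mathfrak{su}(m)$- and $\mathfrak{u}(m)$-pairs $K_\lambda$ contains the diagonal torus of $\UU(m)$, so $\mathcal{P}(\C^m)$ is already multiplicity free in the monomial basis; it stays so after tensoring by $(W_\tau)_{|_{K_\lambda}}$ exactly when $\tau$ is a character of the distinguished circle factor, any nontrivial $\SU(m)$-content of $\tau$ forcing either a repeated torus weight in $W_\tau$ or a coincidence $\alpha+e_S=\alpha'+e_{S'}$ of monomial shifts. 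For the Heisenberg-type pair, $K_\lambda=\Sphere^1\times\Sp(n)$ and $V_\lambda\cong\C^{2n}$ with $\Sphere^1$ acting by scalars and $\Sp(n)$ by its standard representation, so $\mathcal{P}(\C^{2n})=\bigoplus_{d\ge0}\C_{-d}\boxtimes V_{d\omega_1}$ with each summand multiplicity free; tensoring by $\tau\in\widehat{\SU(2)}$ --- whose restriction to $\Sphere^1$ is multiplicity free --- keeps the module multiplicity free because the $\Sphere^1$-degree separates distinct $d$, whereas tensoring by $\tau\in\widehat{\Sp(n)}$ of highest weight $\mu$ requires $V_{d\omega_1}\otimes V_\mu$ to be multiplicity free for \emph{all} $d\ge0$, which by the classification of multiplicity-free tensor products for $\Sp(n)$ holds exactly for the rectangular (constant) partitions $\mu$ of length $\le n$. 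The families carrying a $\UU(k)$ factor are treated by the analogous computation, with the $\UU(k)\times\Sp(n)$-decomposition of the relevant Fock module (from $(\UU(k),\UU(\,\cdot\,))$ and $(\Sp(n),\mathfrak{sl}_2)$ Howe duality) in place, and there every $\tau\in\widehat{\UU(k)}$ turns out to be admissible; finally the ``linked'' family reduces, after restriction to $K_\lambda$, to an outer tensor product over the simple factors of the preceding situations, so that multiplicity-freeness holds iff it holds factor by factor, which forces $\tau$ to be trivial on every $\SU(m_i)$ and every $\SU(2)$.

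\textbf{Main obstacle.} The hard direction is the ``only if'' part of the third step: for every excluded $\tau$ one must actually exhibit a repeated $K_\lambda$-type in $\mathcal{P}(V_\lambda)\otimes (W_\tau)_{|_{K_\lambda}}$, and this requires rather precise control of the branching of the Fock modules $\mathcal{P}(V_\lambda)$ --- through Pieri and Littlewood--Richardson rules, symplectic tensor-product multiplicities, and weight multiplicities --- which is exactly the bookkeeping that pins the answer down to the list in the statement. The last item, that $(\UU(n),N(\R,\C^n))$ is a strong Gelfand pair, is not reproved here but quoted from \cite{Yakimova}.
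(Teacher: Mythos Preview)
Your overall strategy matches the paper's: reduce commutativity of $(K,N,\tau)$ to multiplicity-freeness of $\mathcal{P}(V_\lambda)\otimes(W_\tau)_{|_{K_\lambda}}$ via localization to the Heisenberg group $N_\lambda$, and then settle each family from Lauret's list by explicit branching. However, your first step contains a genuine gap.

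You assert that the square-integrability condition on $N(\mathfrak{g},V)$ filters Lauret's list down to exactly the six families appearing in the theorem. This is false. Lauret's classification of indecomposable Gelfand pairs (with $K=G\times U$) contains \emph{nine} families with square-integrable representations; in addition to the six you list, there are
\begin{itemize}
\item[(II)] $(\Spin(4)\times\Sp(k_1)\times\Sp(k_2),\ N(\mathfrak{su}(2)\oplus\mathfrak{su}(2),\ (\C^2)^{k_1}\oplus\R^4\oplus(\C^2)^{k_2}))$,
\item[(III)] $(\Sp(2)\times\Sp(n),\ N(\mathfrak{sp}(2),(\C^4)^n))$,
\item[(IV)] $(\SO(2n),\ N(\mathfrak{so}(2n),\R^{2n}))$.
\end{itemize}
For each of these there exists $X\in\mathfrak{g}'$ with $\pi(X)$ invertible, so $B_\lambda$ is nondegenerate on a Zariski-open set and square-integrability holds. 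These families disappear from the final answer not by the square-integrability filter but because the case-by-case analysis shows that for each of them the triple is commutative \emph{only} for trivial $\tau$. Your proposal omits this analysis entirely. Case~(IV) is the easiest: $K_\lambda$ is the diagonal torus $\Toro^n\subset\SO(2n)$, any nontrivial $\tau$ has at least two distinct weights, and one manufactures a repeated character in $(\omega\otimes\tau)_{|_{\Toro^n}}$ by shifting monomials. Cases~(II) and~(III) require real work; for~(III) in particular one needs the $\Sp(n)$-decomposition of $\eta_{(r)}\otimes\eta_{(s)}$ and an argument with the root-lattice structure of $\Sp(2)$-weights that is not covered by your outline.

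A secondary point: the paper replaces your ``generic $K_\lambda$'' language with a sharper criterion that removes the need to ``check that $\lambda\mapsto(K_\lambda,V_\lambda)$ is essentially constant on $\mathfrak{g}^\ast_{\mathrm{reg}}$.'' Since the set of regular elements in $\mathfrak{g}'$ is open and dense, and the Pfaffian locus $\{P(\lambda)\neq0\}$ parametrizing $\widehat{N_{\mathrm{sq}}}$ is open and nonempty, one can always find $\lambda$ with $\pi(X_\lambda)$ invertible \emph{and} $X_\lambda$ regular, whence $K_\lambda=T\times U$ for a maximal torus $T\subset G$. Combined with the converse inclusion $T\subset C_G(X_\lambda)$ for arbitrary $\lambda$, this yields the clean equivalence: $(K,N,\tau)$ is commutative if and only if $(\omega\otimes\tau)_{|_{T\times U}}$ is multiplicity free. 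This criterion makes the bookkeeping in your third step uniform.

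Finally, in the Heisenberg-type case your appeal to ``the classification of multiplicity-free tensor products for $\Sp(n)$'' is not an off-the-shelf citation. The precise statement needed --- that $\eta\otimes\eta_{(s)}$ is multiplicity free for \emph{all} $s\geq0$ if and only if $\eta$ corresponds to a rectangular partition --- is derived in the paper from Okada's universal Pieri rule, and the ``only if'' direction (exhibiting a repeated constituent when $\eta$ is non-rectangular) is exactly the kind of bookkeeping you flag as the main obstacle.
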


\textsc{Acknowledgments.} 
We are deeply grateful to Leandro Cagliero for many conversations concerning on decomposition of tensor products of representations.
%for his availability to help us. 
We also thank the referees whose observations improved our first version of this paper significantly. 
%display page numbers in the headings. Start with roman numerals %

%%%%%%%%%%%%%%%%%%%%%%%%%%%%%%%%%%%%%%%%%%%%%%%%%%%%%%%%%%%%PRELIMINARES
%%%%%%%%%%%%%%%%%%%%%%%%%%%%%%%%%%%%%%%%%%%%%%%%%%%%%%%%%%%%

\section{Some preliminary results}\label{section prelim}

%%%%%%%%%%%%%%%%PLANCHEREL%%%%%%%%%%%%%%%%%%%%
%%%%%%%%%%%%%%%%%%%%%%%%%%%%%%%%%%%%%%%%%%%%%%

\quad  Let $N$ be a connected and simply connected nilpotent Lie endowed with a left-invariant Riemannian metric determined by an inner product $\langle\cdot,\cdot\rangle$ on its Lie algebra.  We denote by $\widehat{N}$ the set of equivalence classes of  irreducible unitary representations of $N$.
The Plancherel theorem states that %for scalar-valued functions on $N$ 
there exists a measure $\mu$ on $\widehat{N}$ such that 
\begin{equation*}
\displaystyle{\int_N |f(x)|^2 dx=\int_{\widehat{N}}\|\rho(f)\|_{HS}^2 \  d\mu(\rho)},
\end{equation*}
for all $f\in L^1(N)\cap L^2(N)$, where $dx$ is the Haar measure on $N$, $HS$ stands for the Hilbert-Schmidt norm, and for $\rho\in\widehat{N}$, $\rho(f)$ is the operator defined by $\rho(f):=\int_N f(x)\rho(x)dx$.
It has a natural extension to 
matrix-valued functions as follows. Let $M_m(\mathbb{C})$ denote the set of all square $m\times m$ matrices over $\mathbb{C}$.

\begin{theorem}\label{Plancherel}
Let $F\in L^1(N, M_m(\mathbb{C}))\cap L^2(N, M_m(\mathbb{C}))$. %be an integrable and square integrable matrix-valued function over $N$.
Then
\begin{equation*}
\|F\|_2^2=\int_{\widehat{N}} \|\rho(F)\|_{HS}^2 \ d\mu(\rho),
\end{equation*}
where $\rho(F)$ is the operator defined by $\rho(F):=\int_N \rho(x)\otimes F(x) dx$.
\end{theorem}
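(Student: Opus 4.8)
The plan is to deduce the matrix-valued Plancherel formula from the classical scalar one by choosing a convenient matrix basis and reducing the Hilbert--Schmidt norm of $\rho(F)$ to a sum of scalar Plancherel integrals. First I would write $F = \sum_{i,j=1}^m F_{ij}\, E_{ij}$, where $E_{ij}$ are the standard matrix units and each $F_{ij}\in L^1(N)\cap L^2(N)$ is a scalar function; this uses the obvious fact that membership in $L^1\cap L^2$ of the $M_m(\C)$-valued $F$ is equivalent to membership of all its entries. Since $\rho(F)=\int_N \rho(x)\otimes F(x)\,dx = \sum_{i,j}\bigl(\int_N F_{ij}(x)\,\rho(x)\,dx\bigr)\otimes E_{ij} = \sum_{i,j}\rho(F_{ij})\otimes E_{ij}$, the operator $\rho(F)$ acts on $\mathcal{H}_\rho\otimes \C^m$.

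Next I would compute $\|\rho(F)\|_{HS}^2$ in the tensor decomposition. Using the orthonormal basis $\{e_\ell\otimes \epsilon_j\}$ of $\mathcal{H}_\rho\otimes\C^m$ (with $\{e_\ell\}$ an orthonormal basis of $\mathcal{H}_\rho$ and $\{\epsilon_j\}$ the standard basis of $\C^m$), the cross terms vanish because $E_{ij}^*E_{i'j'} = \delta_{ii'}E_{jj'}$ has zero trace unless $i=i'$ and $j=j'$; more precisely $\mathrm{tr}\bigl((\rho(F_{ij})\otimes E_{ij})^*(\rho(F_{i'j'})\otimes E_{i'j'})\bigr) = \mathrm{tr}(\rho(F_{ij})^*\rho(F_{i'j'}))\,\mathrm{tr}(E_{ij}^*E_{i'j'})$, which forces $i=i'$, $j=j'$. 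Hence $\|\rho(F)\|_{HS}^2 = \sum_{i,j}\|\rho(F_{ij})\|_{HS}^2$. Integrating over $\widehat{N}$ against $\mu$ and applying the scalar Plancherel theorem to each $F_{ij}$ gives $\int_{\widehat N}\|\rho(F)\|_{HS}^2\,d\mu(\rho) = \sum_{i,j}\int_N |F_{ij}(x)|^2\,dx = \int_N \sum_{i,j}|F_{ij}(x)|^2\,dx = \int_N \|F(x)\|_{HS}^2\,dx = \|F\|_2^2$, where the interchange of sum and integral is legitimate since the sum is finite.

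The only genuinely technical point I expect is a measurability/integrability bookkeeping issue: one should check that $\rho\mapsto \|\rho(F)\|_{HS}^2$ is $\mu$-measurable (which follows from the scalar case applied entrywise, since it is a finite sum of measurable functions) and that $\rho(F)$ is well-defined as a Bochner-type integral valued in the Hilbert--Schmidt operators on $\mathcal{H}_\rho\otimes\C^m$ for $\mu$-a.e.\ $\rho$ (again reducing to the scalar statement entrywise). Neither of these presents a real obstacle; the entire content is the elementary linear algebra identity $\|\sum_{i,j} A_{ij}\otimes E_{ij}\|_{HS}^2 = \sum_{i,j}\|A_{ij}\|_{HS}^2$ combined with linearity of $\rho(\cdot)$ and the classical Plancherel theorem. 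Thus the matrix-valued version is essentially a formal consequence, and the proof is short.
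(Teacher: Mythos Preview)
Your proposal is correct and follows essentially the same approach as the paper: both reduce to the scalar Plancherel theorem by decomposing $F$ into its matrix entries $F_{ij}$ and verifying $\|\rho(F)\|_{HS}^2=\sum_{i,j}\|\rho(F_{ij})\|_{HS}^2$. Your use of the orthogonality $\mathrm{tr}(E_{ij}^*E_{i'j'})=\delta_{ii'}\delta_{jj'}$ is a slightly slicker packaging of the same computation the paper carries out explicitly with basis vectors $h_\alpha\otimes e_j$.
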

\begin{proof}
Let $\{e_i \}_{i=1}^m$ be a basis of $\mathbb{C}^m$.
The classical Plancherel theorem holds for all the matrix entries $F_{i,j}(x):=\langle F(x)e_j,e_i\rangle$ of $F$. Therefore
\begin{align*}
\|F\|_2^2 &:=\sum_{i,j=1}^m \|F_{i,j}\|_2^2 \\
&=\sum_{i,j=1}^m \int_{\widehat{N}} \|\rho(F_{i,j})\|_{HS}^2  \ d\mu(\rho).
\end{align*}
For each $(\rho,H_\rho)\in\widehat{N}$, let $\{h_\alpha\}$ be an orthonormal basis of the Hilbert space $H_\rho$, whence
\begin{align*}
\sum_{i,j=1}^m \int_{\widehat{N}} \|\rho(F_{i,j})\|_{HS}^2  \ d\mu(\rho)&=
\sum_{i,j=1}^m\int_{\widehat{N}} \sum_{\alpha,\beta}
|\langle\rho(F_{i,j})h_\alpha,h_\beta\rangle|^2 \ d\mu(\rho) \\
&=\sum_{i,j=1}^m\int_{\widehat{N}} \sum_{\alpha,\beta}
|\int_N \langle\rho(x) h_\alpha, h_\beta\rangle \langle F(x) e_j,e_i\rangle dx|^2 \ d\mu(\rho)\\
&=\int_{\widehat{N}}\sum_{i,j=1}^m \sum_{\alpha,\beta}
|\int_N \langle \rho(x)\otimes F(x) (h_\alpha\otimes e_j), h_\beta\otimes e_i\rangle dx|^2 \ d\mu(\rho)\\
&=\int_{\widehat{N}}\sum_{i,j=1}^m \sum_{\alpha,\beta}
|\langle\rho(F) (h_\alpha\otimes e_j), h_\beta\otimes e_i\rangle |^2 \ d\mu(\rho)\\
&=\int_{\widehat{N}} \|\rho(F)\|
_{HS}^2 \ d\mu(\rho).
\end{align*}
\end{proof}
Let $Z$ be the center of $N$. A representation  $ (\rho, H_\rho)\in\widehat{N}$ is said to be \textit{square integrable} if its matrix coefficients $\langle u,\rho(x)v\rangle$, for $u,v\in H_\rho$,  are square integrable functions on $N$ module $Z$. 
We denote by $\widehat{N_\mathrm{sq}}$ the subset of $\widehat{N}$ of square integrable classes. 
Theorem 14.2.14 in \cite{Wolf} states that if $N$ has a square integrable representation, its Plancherel measure is concentrated on $\widehat{N_\mathrm{sq}}$. 
\\

Let $K$ be a compact subgroup of automorphisms of $N$.
On the one hand it is shown in \cite{BJR} that if $(K,N)$ is a Gelfand pair, then $N$ must be abelian or two-step nilpotent. On the other hand, in \cite{Fulvio} it is proved that if there exists $\tau \in\widehat{K}$ such that $(K,N,\tau)$ is a commutative triple, then $(K,N)$ must be a Gelfand pair. For these reasons we deal only with two-step nilpotent groups $N$. Moreover, 
we assume that $N$  has a square integrable representation.
\\

Let $\mathfrak{n}$ be the Lie algebra of $N$ with Lie bracket  $[\cdot,\cdot]$. Consequently, $\mathfrak{n}=\mathfrak{z}\oplus V$ where $\mathfrak{z}$ is its center and $V$ is the orthogonal complement of $\mathfrak{z}$. The group $N$ acts naturally on its Lie algebra $\mathfrak{n}$  by  the adjoint action $\Ad$. Also, $N$ acts on $\mathfrak{n}^*$, the real dual space of $\mathfrak{n}$, by the dual or contragredient representation of the adjoint representation
%\begin{equation}
$\mathrm{Ad}^*(n)\lambda:=\lambda\circ \Ad(n^{-1})$  (for all $n\in N$ and  $\lambda\in\mathfrak{n}^*$). %\end{equation} 
Fixed $\lambda\in\mathfrak{n}^*$ nontrivial, let $O(\lambda):=\{\mathrm{Ad}^*(n)\lambda | \  n\in N\}$ be its coadjoint orbit. 
\\

From Kirillov's theory there is a correspondence between $\widehat{N}$ and the set of coadjoint orbits. Let  $B_\lambda$ be the skew symmetric bilinear form on $\mathfrak{n}$ given by $$B_\lambda(X,Y):=\lambda([X,Y]) \quad (X,Y\in \mathfrak{n}).$$ 
Let
$\mathfrak{m}\subset\mathfrak{n}$ be a maximal isotropic subalgebra in the sense that $B_\lambda(X,Y)=0$ for all $X,Y\in \mathfrak{m}$ and let $M:=\exp(\mathfrak{m})$. 
Defining on $M$ the character  $\chi_\lambda(\exp(Y)):=e^{i\lambda(Y)}$  (for all $Y \in\mathfrak{m}$), the irreducible 
representation  $\rho_\lambda\in\widehat{N}$ associated to $O(\lambda)$ is the induced representation $\rho_\lambda:=\mathrm{Ind}_M^N(\chi_\lambda)$. 
\\

Let $X_\lambda\in\mathfrak{z}$ be the representative of $\lambda_{|_\mathfrak{z}}$ (the restriction of $\lambda$ to $\mathfrak{z}$), that is,  $\lambda(Y)=\langle Y,X_\lambda\rangle$ for all $Y\in \mathfrak{z}$. We can split  $\mathfrak{z}=\mathbb{R}X_\lambda\oplus \mathfrak{z}_\lambda$, where $\mathfrak{z}_\lambda:=\Ker(\lambda_{|_{\mathfrak{z}}})$ is the orthogonal complement of $\mathbb{R}X_\lambda$ on $\mathfrak{z}$.
\\

One immediately sees that $\rho_\lambda\in \widehat{N_\mathrm{sq}}$ implies that $B_\lambda$ is non-degenerate on $V$ and that the orbits are maximal, i.e., they are of the form $O(\lambda)=\lambda\oplus V^*$. Indeed, let $\mathfrak{a}_\lambda$ be the subspace of $V$ where $B_\lambda$ is degenerate, i.e.,
$$\mathfrak{a}_\lambda:= \{u \in V | \ B_\lambda(u,v)=0 \ \forall v\in V\}$$ 
and let $\mathfrak{b}_\lambda$ be the subspace of $V$ where $B_\lambda$ is non-degenerate.   Consider $\mathfrak{n}_\lambda:=\mathfrak{a}_\lambda \oplus\mathfrak{b}_\lambda\oplus \R X_\lambda$ and $N_\lambda:=\exp(\mathfrak{n}_\lambda)$. We equipped $\mathfrak{a}_\lambda$ with the trivial Lie bracket and $\mathfrak{h}_\lambda:=\mathfrak{b}_\lambda\oplus \R X_\lambda$ with the bracket given by   $$[X,Y]_{\mathfrak{h}_\lambda}:=B_\lambda(X,Y)  X_\lambda.$$ 
We observe that $\mathfrak{h}_\lambda$ is a Heisenberg algebra and we denote by $H_\lambda$ the corresponding Heisenberg group. Let $A_\lambda:=\exp(\mathfrak{a}_\lambda)$.  
The representation $\rho_\lambda$ is trivial on $Z_\lambda:=\exp(\mathfrak{z}_\lambda)$. Thus it factors through $N_\lambda$ and identifying $N_\lambda$ with the product group $A_\lambda \times H_\lambda$ we can write
$$\rho_\lambda(a,n)=\chi(a)\rho_\lambda'(n),$$
where $\chi$ is a unitary character of $A_\lambda$ and $\rho_\lambda'$ is an irreducible unitary representation of $H_\lambda$. Thus, $\rho_\lambda$ cannot be square integrable unless $\mathfrak{a}_\lambda\equiv \{0\}$.
\\

The converse assertion is also true. That is, if  the orbits are maximal (or equivalently, if $B_\lambda$ is non-degenerate on $V$), then $\rho_\lambda$ is square integrable. In fact, $\mathfrak{n}_\lambda=\mathbb{R}X_\lambda\oplus V$. As before, since the character $\chi_\lambda$ is trivial on $\mathfrak{z}_\lambda$, the representation $\rho_\lambda$ acts trivially on $\mathfrak{z}_\lambda$ and defines an irreducible unitary representation in $\widehat{N_\lambda}$. In this case $N_\lambda$ is isomorphic to a Heisenberg group, so $\rho_\lambda$ is a square integrable representation.
\\

This is a particular case of the following general result
(cf. \cite[Theorem 14.2.6]{Wolf}). If $N$ is a connected and simply connected nilpotent Lie group,   the following conditions are equivalent:
\begin{itemize}
\item[(i)] $[\rho_\lambda]\in \widehat{N_\mathrm{sq}}$.
\item[(ii)] The orbit $O(\lambda)$ is completely determined by $\lambda_{|_\mathfrak{z}}$.
\item[(iii)] $B_\lambda$ is non-degenerate on $\mathfrak{n}/\mathfrak{z}$.
\end{itemize}

Finally, we remark that in this context the bilinear form $B_\lambda$ restricted to $V\times V$ defines the symplectic form associated to the Heisenberg group $N_\lambda$, so $\dim(V)=2m$ for some positive integer $m$ and 
the vector space $\mathfrak{m}$ is isomorphic to $\mathfrak{z}\oplus\mathbb{R}^m$.
\\

\bigskip

   %We consider the linear operator $J_{\lambda}:V\rightarrow V$ defined by
% \begin{equation}\label{J}
% \langle J_{\lambda}u,v\rangle:=\langle [u,v],X_\lambda\rangle=B_\lambda(u,v) \quad (u,v\in V).
% \end{equation}
%$J_{\lambda}$ is a skew symmetric linear operator on $V$

% \begin{remark}\label{remark 1}
% If $\rho_{\lambda} \in \widehat{N_\mathrm{sq}}$, then $\rho_{\lambda}\in\widehat{N_{\lambda}}$. 
% \end{remark}

%he irreducible unitary  representations of the Heisenberg group $N_\lambda$ are very well known. They are determined by a parameter $t \in \R X_\lambda$ in the center of $N_\lambda$ and they can be of two types: if $t=0$, are characters on $\C^{m}$ and if $t\neq 0$, are those realized on the Fock space 
%begin{equation*}
%mathcal{F}_{t}(\C^m)=\{F:\C^m \rightarrow \C \  | \  \int_{N_\lambda}{ |F(z)|^2e^{-\frac{t}{2}|z|^2}dz<\infty}\}.
%end{equation*}
%ote that the space $\mathcal{P}(\C^m)$, of the polynomials on $\C^m$, is dense in $\mathcal{F}_{t}(\C^m)$ for all $t\in\R X_\lambda$.

%%%%%%%%%%%%%%%%%%%%%%%KKKKKKKKK%%%%%%%%%%%%%%%%%%
%%%%%%%%%%%%%%%%%%%%%%%%%%%%%%%%%%%%%%%%%%%%%%%%%%%%%%%%%%%%%

Now we concentrate on the action of the compact subgroup $K$ of orthogonal automorphisms of $N$ (we make no distinction between automorphisms of $N$ and $\mathfrak{n}$). The group $K$ acts on $\widehat{N}$ in the following way: given $k \in K$ and $(\rho_\lambda, H_{\rho_{\lambda}})  \in \widehat{N}$, $\rho_\lambda^k(x):=\rho_\lambda(k\cdot x)$
defines an irreducible unitary representation of $N$, which may or may not be equivalent to $\rho_\lambda$. On the one hand, let $K_{\rho_{\lambda}}:=\{k\in K | \ \rho_\lambda^k \sim \rho_{\lambda}  \}$ be the stabilizer of $\rho_\lambda$.   
For $k\in K_{\rho_{\lambda}}$, there exists a (unique
up to a unitary factor) unitary operator ${\omega}_\lambda(k)$ on $H_{\rho_{\lambda}}$ which
intertwines $\rho_\lambda$ with $\rho_\lambda^k$, i.e., $\rho_\lambda^k(x)={\omega}_\lambda(k)\rho_\lambda(x){\omega}_\lambda(k)^{-1}$ for all $x\in N$. It defines a genuine unitary
representation of $K_{\rho_{\lambda}}$ on $H_{\rho_{\lambda}}$ (cf. \cite[Lemma 2.3]{BJR1}).
\\

On the other hand, let $K_{X_{\lambda}}$ be the stabilizer of $X_\lambda$ (with respect to the action of $K$ on $\mathfrak{n}$). 
For the case where 
 $\rho_\lambda\in\widehat{N_\mathrm{sq}}$,   it is easy to see that
$K_{\rho_{\lambda}}$ coincides with $K_{X_{\lambda}}$ and we denote them  by $K_\lambda$.
Observe that, for all $u,v\in V$ and for all $k\in K_\lambda$, 
\begin{align*}
B_\lambda(k\cdot u, k\cdot v)&=\langle X_\lambda, [k\cdot u, k\cdot v]\rangle= \langle X_\lambda, k\cdot[u, v]\rangle =\langle k^{-1}\cdot X_\lambda, [u, v]\rangle \\
&=\langle X_\lambda, [u, v]\rangle =B_\lambda( u,  v),
\end{align*}
where in the second equality we used that $K$ acts on $\mathfrak{n}$ by automorphisms. As a result, $K_\lambda$ is a subgroup of the symplectic group $\mathrm{Sp}(V,(B_\lambda)_{|_{V\times V}})$. Moreover, since $K_\lambda$ is compact, we can assume that it is a subgroup of the unitary group $\UU(m)\subset \mathrm{Sp}(V,(B_\lambda)_{|_{V\times V}})$.
At this point we emphasize that the representation ${\omega}_\lambda$ of $K_\lambda$ coincides with the  \textit{metaplectic representation}  associated to the Heisenberg group $N_\lambda$.

%%%%%%%%%%%%%%%%%%%%%%%%%%LOCALIZATION%%%%%%%%%%%%%%%%%%%%%%%%%%%%%%%%%%%%%%%%%%%%%%%%%%%%%%%%%%%%%%%%%%%%%%%%%%%%%%%%%%%%%%%

\subsection{Localization}\label{subsection loc}

\quad  Let $(\tau,W_\tau)\in\widehat{K}$. We denote by $L^1_{\tau}(N,\mathrm{End}(W_\tau))$ the space of $\mathrm{End}(W_\tau)$-valued integrable functions $F$ on $N$ such that 
\begin{equation*}
F(k\cdot n)=\tau(k)F(n)\tau(k)^{-1} \quad (k\in K, \ n\in N).
\end{equation*}
It is an algebra with the convolution product given by 
\begin{equation*}
(F*G)  (x):=\int_N F(xy^{-1})G(y)dy \quad (x\in N; \  F, G \in L^1_{\tau}(N,\mathrm{End}(W_\tau)) ) .
\end{equation*}
\begin{definition}$(K, N, \tau )$ (or $(K\ltimes N, K,\tau))$ is a \textit{commutative triple} if the algebra $L^1_{\tau}(N,\mathrm{End}(W_\tau))$ is commutative.
\end{definition}

\begin{theorem}\label{heisenberg}\textit{(Reduction to Heisenberg groups)}
Let $N$ be a connected and simply connected real two-step nilpotent Lie group which has a square integrable representation. Let $K$ be a compact subgroup of orthogonal automorphisms of $N$ and let $(\tau,W_\tau)\in\widehat{K}$. Then 
\begin{itemize}
\item[$(i)$] $(K,N)$ is a Gelfand pair if and only if $(K_\lambda,N_\lambda)$ is a Gelfand pair for every  $\rho_\lambda\in \widehat{N_\mathrm{sq}}$ (\textit{scalar case}).
\item[$(ii)$] $(K,N,\tau)$ is a commutative triple if and only if  $(K_\lambda,N_\lambda, \tau_{|_{K_{\lambda}}})$ is a commutative triple for every  $\rho_\lambda\in \widehat{N_\mathrm{sq}}$ (\textit{matrix case}).
\end{itemize}
\end{theorem}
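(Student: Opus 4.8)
The plan is to prove both items simultaneously, treating the scalar case $(i)$ as the special case $\tau=\mathbf{1}$ of the matrix case $(ii)$. The backbone of the argument is the matrix-valued Plancherel theorem (Theorem~\ref{Plancherel}) together with the fact that, since $N$ has a square integrable representation, its Plancherel measure is concentrated on $\widehat{N_\mathrm{sq}}$. Thus for $F\in L^1_\tau(N,\mathrm{End}(W_\tau))\cap L^2(N,\mathrm{End}(W_\tau))$ the operator family $\{\rho_\lambda(F)\}_{\rho_\lambda\in\widehat{N_\mathrm{sq}}}$ determines $F$, and commutativity of the convolution algebra is equivalent to the assertion that $\rho_\lambda(F)\rho_\lambda(G)=\rho_\lambda(G)\rho_\lambda(F)$ for $\mu$-almost every $\lambda$ and all $F,G$. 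So the task reduces to understanding, for a fixed square integrable $\rho_\lambda$, the image of $L^1_\tau(N,\mathrm{End}(W_\tau))$ under the map $F\mapsto\rho_\lambda(F)=\int_N\rho_\lambda(x)\otimes F(x)\,dx$, and showing this image ``is'' (generates the same commutant/algebra as) the corresponding image for the triple $(K_\lambda,N_\lambda,\tau_{|_{K_\lambda}})$.

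First I would record the structural reductions already assembled in the preliminaries: $\rho_\lambda$ factors through $N_\lambda=\exp(\R X_\lambda\oplus V)$, which is a Heisenberg group $H_\lambda$ (here $N_\lambda$ equals $H_\lambda$ since $\mathfrak a_\lambda=\{0\}$ in the square integrable case), the stabilizer $K_{\rho_\lambda}$ coincides with $K_{X_\lambda}=K_\lambda$, $K_\lambda$ sits inside $\UU(m)\subset\Sp(V,B_\lambda)$, and $\omega_\lambda$ is the metaplectic representation of $K_\lambda$ acting on $H_{\rho_\lambda}$. Then I would describe the averaging/localization procedure: given $F\in L^1_\tau(N,\mathrm{End}(W_\tau))$, push it down to $N_\lambda$ (integrating over $Z_\lambda=\exp\mathfrak z_\lambda$, on which $\rho_\lambda$ is trivial) to get a function on $N_\lambda$ that is covariant under $K_\lambda$ with respect to $\tau_{|_{K_\lambda}}$; this does not change $\rho_\lambda(F)$. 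Conversely, any element of $L^1_{\tau_{|_{K_\lambda}}}(N_\lambda,\mathrm{End}(W_\tau))$ can be extended/inflated to an element of $L^1_\tau(N,\mathrm{End}(W_\tau))$ (using that $K/K_\lambda$ parametrizes a transversal to the orbit and a partition-of-unity argument over the orbit $K\cdot\lambda$), with matching $\rho_\lambda$-image up to the expected conjugation by $\omega_\lambda$. Combining the two directions shows that $\{\rho_\lambda(F):F\in L^1_\tau(N)\}$ and $\{\rho_\lambda(F'):F'\in L^1_{\tau_{|_{K_\lambda}}}(N_\lambda)\}$ coincide as subsets of $\mathrm{End}(H_{\rho_\lambda})\otimes\mathrm{End}(W_\tau)$, so one algebra is commutative for a.e.\ $\lambda$ iff the other is.

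The final step is to organize the ``for every $\rho_\lambda\in\widehat{N_\mathrm{sq}}$'' bookkeeping. For the forward direction one must note that commutativity of $L^1_\tau(N)$ forces, via Plancherel, $\rho_\lambda(F)\rho_\lambda(G)=\rho_\lambda(G)\rho_\lambda(F)$ for $\mu$-a.e.\ $\lambda$; but by $K$-equivariance of the whole setup (the map $\lambda\mapsto k\cdot\lambda$ carries one triple isomorphically to the conjugate one) the set of ``bad'' $\lambda$ is $K$-invariant, hence if it is nonnull it contains a full orbit, and an openness/continuity argument upgrades ``a.e.'' to ``every'', giving that each $(K_\lambda,N_\lambda,\tau_{|_{K_\lambda}})$ is commutative. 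Conversely, if every $(K_\lambda,N_\lambda,\tau_{|_{K_\lambda}})$ is commutative, then $\rho_\lambda(F)$ and $\rho_\lambda(G)$ commute for all $\lambda$ and all $F,G$, and injectivity of the matrix Plancherel transform yields $F*G=G*F$.

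I expect the main obstacle to be making the ``conversely'' localization rigorous: realizing a covariant function on the fiber $N_\lambda$ as the restriction of a globally $\tau$-covariant function on $N$ while keeping exact control of the $\rho_\lambda$-image. The delicate points are (a) the unitary ambiguity in $\omega_\lambda$ and the need to absorb it consistently across the orbit, (b) ensuring the extended function is genuinely $L^1$ on $N$ (this is where square integrability of $\rho_\lambda$, equivalently non-degeneracy of $B_\lambda$ on $V$ and maximality of the orbits, is essential, since it makes the orbit through $\lambda$ a nice submanifold $\lambda\oplus V^*$ with the center direction as the only transverse parameter), and (c) handling the measure-theoretic ``a.e.\ vs.\ every'' passage in the forward direction, which relies on the continuity of $\lambda\mapsto\rho_\lambda(F)$ and the homogeneity of $\widehat{N_\mathrm{sq}}$ under $K$ together with dilations. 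Everything else — pushing a $\tau$-covariant function down, checking covariance is preserved, and the Plancherel bookkeeping — is routine once these points are settled.
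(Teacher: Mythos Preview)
Your outline is workable but takes a genuinely different route from the paper's. The paper does not attempt to identify the operator images $\{\rho_\lambda(F):F\in L^1_\tau(N)\}$ and $\{\rho_\lambda(F'):F'\in L^1_{\tau|_{K_\lambda}}(N_\lambda)\}$ directly. Instead it pivots on the multiplicity-free criterion of Ricci--Samanta \cite[Theorem~6.1]{Fulvio}: a triple $(K,N,\tau)$ is commutative if and only if $\omega_\lambda\otimes\tau_{|_{K_{\rho_\lambda}}}$ is multiplicity free for every $\rho_\lambda\in\widehat N$. Since the metaplectic representation $\omega_\lambda$ attached to $\rho_\lambda$ is the \emph{same} object whether one regards $\rho_\lambda$ as a representation of $N$ or of the Heisenberg group $N_\lambda$, and since $K_{\rho_\lambda}=K_\lambda$ in both pictures, applying the criterion once to $(K,N,\tau)$ and once to $(K_\lambda,N_\lambda,\tau_{|_{K_\lambda}})$ gives the forward implication in one line. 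For the converse the paper keeps your Plancherel skeleton but, rather than lifting functions from $N_\lambda$ back to $N$, uses \cite[Lemma~6.2]{Fulvio} to observe that each $\rho_\lambda(F)$ is a $K_\lambda$-intertwiner of $\omega_\lambda\otimes\tau_{|_{K_\lambda}}$; multiplicity-freeness of the latter (coming from commutativity of the local triple) then forces $\rho_\lambda(F)$ to be scalar on each irreducible component by Schur, so all such operators commute, and Theorem~\ref{Plancherel} finishes.

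What this buys: the paper's route entirely sidesteps the three obstacles you flagged. There is no inflation of covariant functions from $N_\lambda$ to $N$, no partition of unity over $K/K_\lambda$, and no a.e.-to-every upgrade, because the multiplicity-free condition is a pointwise representation-theoretic statement checked $\lambda$ by $\lambda$. Your approach, by contrast, is more self-contained (it does not import the Ricci--Samanta theorem as a black box) and closer in spirit to the scalar argument of \cite{BJR1}, but the extension step and the measure-theoretic upgrade are genuine work; if you pursue it, the dilation argument you sketch for transferring commutativity between different central characters on the Heisenberg group $N_\lambda$ is indeed the right mechanism to close the loop in the forward direction.
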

\begin{proof}

\item [$(i)$] We recall that, by hypothesis, $N$ has a Plancherel measure concentrated on $\widehat{N_\mathrm{sq}}$ and thus we apply the argument given in \cite[pages 571-574]{BJR1}.
\\ 
\item [$(ii)$]
A generalization of the ideas given in \cite{BJR1} proves the second statement. In order to do that generalization we use  \cite[Theorem 6.1]{Fulvio} which asserts that $(K,N,\tau)$ is a commutative triple if and only if ${\omega}_\lambda\otimes (\tau_{|_{K_{\rho_{\lambda}}}})$ is multiplicity free for each $\rho_\lambda\in\widehat{N}$.
\\ \\
We assume first that $(K,N,\tau)$ is a commutative triple. In particular, $\omega_\lambda\otimes(\tau_{|_{K_\lambda}})$ is multiplicity free for all  $\rho_\lambda\in \widehat{N_\mathrm{sq}}$. Therefore  \cite[Theorem 6.1]{Fulvio} states that  $(K_\lambda,N_\lambda, \tau_{|_{K_{\lambda}}})$ is a commutative triple for all $\rho_\lambda\in \widehat{N_\mathrm{sq}}$.
\\ \\
Conversely, let $F,G\in L^1_{\tau}(N,\mathrm{End}(W_\tau))$. It is clear that if $\rho\in \widehat{N}$, $\rho(F*G)=\rho(F)\rho(G)$. If we see that $\rho_\lambda(F)$ commutes with $\rho_\lambda(G)$ for all $\rho_\lambda\in \widehat{N_\mathrm{sq}}$, applying the Plancherel theorem (Theorem \ref{Plancherel}) and  \cite[Theorem 14.2.14]{Wolf} we will have shown $F*G=G*F \ a.e$. Let $F\in L^1_{\tau}(N,\mathrm{End}(W_\tau))$.  We know that the operator $\rho_\lambda(F)$ intertwines ${\omega}_\lambda\otimes(\tau_{|_{K_\lambda}})$ with itself (cf. \cite[Lemma 6.2]{Fulvio}). Since, for each $\rho_\lambda\in\widehat{N_\mathrm{sq}}$, $(K_\lambda,N_\lambda, \tau_{|_{K_{\lambda}}})$ is a commutative triple,  %by Remark \ref{meta} and applying Theorem 6.1 in \cite{Fulvio}, we have that
$\omega_\lambda\otimes(\tau_{|_{K_\lambda}})$ is multiplicity free. %(cf. \cite[Theorem 6.1]{Fulvio}). 
Therefore by Schur lemma, $\rho_\lambda(F)$ is a multiple of the identity operator on each irreducible component of $H_{\rho_\lambda}$ and the conclusion follows immediately.
\end{proof}

With this result we reduce the study of $(K,N,\tau)$ “localizing” the problem to each triple $(K_\lambda,N_\lambda,\tau_{|_{K_{\lambda}}})$, for $\lambda$ in correspondence with $\rho_\lambda\in \widehat{N_\mathrm{sq}}$. 
\\

We will assume that $\rho_\lambda$ is acting on the Fock space $\mathcal{F}_\lambda$ which consist, for $\lambda>0$ (respectively $\lambda<0$), of  holomorphic functions (respectively antiholomorphic) on $\C^m$ square integrable with respect to the measure $e^{-\frac{\lambda}{2}|z|^2}$.
%$$\mathcal{F}_\lambda:=\Big\{ f:\C^m \rightarrow\C \text{ holomorphic }| \ \int |f(z)|^2 e^{-\frac{\lambda}{2}|z|^2} dz \Big\} \text{ for } \lambda>0 \text{ (analogous for } \lambda<0\text{),} $$
The space $\mathcal{P}(\C^m)$ of polynomials on $\C^m$ is dense on $\mathcal{F}_\lambda$. 
Thereupon we consider the metaplectic action of $\UU(m)$ on $\mathcal{P}(\C^m)$ given by
$(\omega(k)(p))(z):=p(k^{-1}z)$ and
we will work with $\omega_{|_{K_{\lambda}}}$ instead of $\omega_\lambda$.
% \\

% $\mathcal{F_\lambda}$ of the holomorphics on $\C^m$ function which are square integrable with respect to the measure $e^{-\frac{\lambda}{2}|z|^2}$, for $\lambda>0$. The  in $\mathcal{F}_\lambda$ for all $\lambda$.    

%%%%%%%%%%%%%%%%%%%%%%%%TORO%%%%%%%%%%%%%%%%%%%%%%%%%%%%%%%%%%%%%%%%%%%%%%%%%%%%%%%%%%%%%%%%%%%%%%%%%%%%%%%%%%%%%%%

\subsection{Criteria for a family of two-step nilpotent Lie groups}\label{section torus}

%  Starting from a real representation $(J, V)$ of a Clifford algebra $Cl(\mathfrak{z})$, such that $(J_{(z)})^2=-Id$ for all $z$ with $|z|=1$, Kaplan in \cite{Kaplan}
% constructs a two-step nilpotent Lie algebra $\mathfrak{n}=\mathfrak{z}\oplus V$ with center $\mathfrak{z}$ and Lie bracket
% defined on $V$ by $\langle[v, w], z\rangle :=\langle J_{(z)} v, w\rangle$ for all $v, w \in V$, $z \in \mathfrak{z}$, where $\langle\cdot,\cdot\rangle$ is a
% natural inner product on $\mathfrak{n}$. The corresponding H-type group is denoted by $(N ,\langle\cdot,\cdot\rangle)$,
% where $N$ is the simply connected Lie group with Lie algebra $\mathfrak{n}$, endowed with the
% left-invariant metric determined by $\langle\cdot,\cdot\rangle$. (For a discussion about real representations we refer to \cite{BD} Chapter 6.)
% \\ 

\quad In this subsection we introduce  a subclass of two-step homogeneous nilmanifolds
with an analog construction  to that of H-type groups. This construction is very well explained in the papers  \cite{Lauret} and \cite{Lauret nil} by J. Lauret. Start from a real faithful
representation $(\pi, V)$ of a compact Lie algebra $\mathfrak{g}$ (i.e., $\mathfrak{g} = [\mathfrak{g}, \mathfrak{g}]\oplus \mathfrak{c}$  where $\mathfrak{c}$ is the center of $\mathfrak{g}$ and $\mathfrak{g}^{\prime}:=[\mathfrak{g}, \mathfrak{g}]$
is a compact semisimple Lie algebra).  
Let $\mathfrak{n}:=\mathfrak{g}\oplus V$ be
the two-step nilpotent Lie algebra with center $\mathfrak{g}$ and Lie bracket defined on $V$ by
${\langle[u, v], X \rangle}_\mathfrak{g}:={\langle\pi(X)u, v\rangle}_V$ for all $u, v \in V$, $X \in \mathfrak{g}$, where the inner products $\langle\cdot,\cdot\rangle_\mathfrak{g}$ and $\langle\cdot,\cdot\rangle_V$ are  $\mathrm{ad}(\mathfrak{g})$-invariant and $\pi(\mathfrak{g})$-invariant respectively. These inner products define an inner product $\langle\cdot,\cdot\rangle$ on $\mathfrak{n}$ satisfying 
\begin{gather*}
\langle X,Y\rangle=
\begin{cases}
\langle X,Y\rangle_\mathfrak{g} & \text{if } X,Y\in\mathfrak{g},\\
\langle X,Y\rangle_V & \text{if } X,Y\in V,\\
0 & \text{if } X\in \mathfrak{g} \text{ and } Y\in V.
\end{cases}
\end{gather*}
The simply connected Lie group with Lie algebra $\mathfrak{n}=\mathfrak{g}\oplus V$  is denoted by $N (\mathfrak{g},V)$. This group is endowed with the left-invariant metric
determined by $\langle\cdot,\cdot\rangle$, thus yielding a two-step homogeneous nilmanifold $(N (\mathfrak{g}, V ), \langle\cdot,\cdot\rangle)$. 
\\

Let $G$ be the simply connected Lie group with Lie algebra $\mathfrak{g}^{\prime}$ and let $U$ be the connected component of the identity of the group
of orthogonal intertwining operators of $(\pi,V)$. The group $U$ acts trivially on the center
$\mathfrak{g}$ of $\mathfrak{n}$ and each $g \in G$ acts on $\mathfrak{n}= \mathfrak{g}\oplus V$ by $(\Ad(g), \pi(g))$, where we also denote by $\pi$ the corresponding representation of $G$ on $V$. Let $K$ be the group of orthogonal automorphisms of $N(\mathfrak{g},V)$. If the automorphism group of $\mathfrak{g}$ is exactly the inner automorphism group of $\mathfrak{g}$ ($\mathrm{Aut(\mathfrak{g})}=\mathrm{Inn(\mathfrak{g})}$), then 
$K = G \times U$ (cf. \cite[Theorem 3.12]{Lauret nil}). From now on we will assume $K=G\times U$.
\\ 

% We will require the existence of $X\in\mathfrak{g}$ with $\Ker({\pi(X)})=\{0\}$. This condition is equivalent to having a square integrable class of representations of the group $N (\mathfrak{g}, V )$. Indeed, 
For $\lambda\in \mathfrak{n}^*$, let $X_\lambda\in\mathfrak{g}$ be its representative on the center $\mathfrak{g}$. Thus 
\begin{equation*}
\lambda([u,v])=\langle[u,v],X_\lambda\rangle=\langle\pi(X_\lambda)u,v\rangle \quad (u, v \in V).
\end{equation*}
From previous discussion, it holds that the representation $\rho_\lambda$ associated to the coadjoint orbit of $\lambda$ is square integrable if and only if $\Ker({\pi(X_\lambda)})=\{0\}$.
\\ 

Assuming that $N(\mathfrak{g},V)$ has a square integrable representation, note that  $K_\lambda$  is equal to $C_{G}(X_\lambda)\times U$, where $C_{G}(X_\lambda):=\{g\in G | \ \Ad(g)X_\lambda=X_\lambda \}$ is the centralizer of $X_\lambda$ in $G$.
\\ 

 For simplicity, sometimes we set  $N:=N(\mathfrak{g},V)$. Let $\tau\in\widehat{K}$. From Theorem \ref{heisenberg} we know that $(K,N,\tau)$ is a commutative triple if and only if $(\omega\otimes\tau)_{|_{K_\lambda}}$ is multiplicity free for every $\rho_\lambda\in \widehat{N_\mathrm{sq}}$. In the next theorem we state a similar criterion in the case when $\mathfrak{g}$ is a semisimple Lie algebra.

\begin{theorem}\label{torus}
Let  $\mathfrak{g}$  be a semisimple Lie algebra and let $T$ be a maximal torus of $G$.  Assume that $N=N(\mathfrak{g},V)$ has a square integrable representation. The triple $(K,N,\tau)$ is  commutative if and only if $(\omega\otimes\tau)_{|_{T\times U}}$ is multiplicity free. 
\end{theorem}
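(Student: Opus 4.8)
The plan is to deduce this from Theorem~\ref{heisenberg}(ii), which reduces commutativity of $(K,N,\tau)$ to the condition that $(\omega\otimes\tau)_{|_{K_\lambda}}$ be multiplicity free for every $\rho_\lambda\in\widehat{N_\mathrm{sq}}$. Since $\mathfrak{g}$ is semisimple we have $K=G\times U$ and, as noted just above the statement, $K_\lambda=C_G(X_\lambda)\times U$. The forward implication is essentially free: if $(K,N,\tau)$ is commutative, pick $\lambda$ with $X_\lambda$ a regular element of a Cartan subalgebra $\mathfrak{t}$ of $\mathfrak{g}$ (such $\lambda$ exists and yields a square integrable $\rho_\lambda$ since $\pi$ is faithful, so $\pi(X_\lambda)$ is invertible for $X_\lambda$ in a suitable cone of regular elements); then $C_G(X_\lambda)=T$, so $(\omega\otimes\tau)_{|_{T\times U}}$ is multiplicity free.

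For the converse, assume $(\omega\otimes\tau)_{|_{T\times U}}$ is multiplicity free and let $\rho_\lambda\in\widehat{N_\mathrm{sq}}$ be arbitrary; I must show $(\omega\otimes\tau)_{|_{C_G(X_\lambda)\times U}}$ is multiplicity free. The first step is to observe that, up to conjugation by an element of $G$ (which does not affect multiplicity-freeness, since conjugation is an automorphism of $K$ carrying $\omega\otimes\tau$ to an equivalent representation), we may assume $X_\lambda\in\mathfrak{t}$, so that $T\subset C_G(X_\lambda)\subset K_\lambda$. The decisive point is then the monotonicity of multiplicity-freeness under restriction: if a representation of $C_G(X_\lambda)\times U$ restricts to a multiplicity-free representation of the smaller group $T\times U$, then it was multiplicity free to begin with (a non-trivial isotypic component of multiplicity $\geq 2$ would restrict to something with an irreducible constituent of multiplicity $\geq 2$). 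Hence $(\omega\otimes\tau)_{|_{T\times U}}$ multiplicity free forces $(\omega\otimes\tau)_{|_{C_G(X_\lambda)\times U}}$ multiplicity free, and Theorem~\ref{heisenberg}(ii) finishes the argument.

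The main technical point to get right is the reduction $X_\lambda\in\mathfrak{t}$ and the interaction of $G$-conjugation with the metaplectic representation $\omega$: one needs that if $X_\lambda'=\Ad(g)X_\lambda$ then the Heisenberg algebras $\mathfrak{h}_\lambda$ and $\mathfrak{h}_{\lambda'}$ are intertwined via $\pi(g)$, so that $\omega_{\lambda'}\cong\omega_\lambda\circ(\text{conjugation by }g)$ on the correspondingly conjugated stabilizers; this is exactly the compatibility already implicit in the description of $\omega$ as the metaplectic representation and in the $K$-equivariance of Kirillov's correspondence. The other routine verification is that regular $X_\lambda\in\mathfrak{t}$ do occur among square integrable parameters, i.e.\ that the faithfulness of $\pi$ guarantees $\pi(X_\lambda)$ invertible on a nonempty open set of regular elements — this is where one uses that a faithful representation of a semisimple Lie algebra has $\ker\pi(X)=\{0\}$ for generic $X$. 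Neither of these is hard; the conceptual content is entirely carried by Theorem~\ref{heisenberg}(ii) together with the restriction-monotonicity of multiplicity-freeness.
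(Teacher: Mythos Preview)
Your overall architecture matches the paper's proof exactly: one direction uses that every $K_\lambda=C_G(X_\lambda)\times U$ contains (a conjugate of) $T\times U$, together with the monotonicity of multiplicity-freeness under passage to a larger group; the other direction picks a single $\lambda$ with $X_\lambda$ regular, so that $K_\lambda=T\times U$. The paper argues the second direction via the Pfaffian: the set $\{X\in\mathfrak{g}\mid \pi(X)\text{ invertible}\}$ is open and, by the hypothesis that $N$ has a square integrable representation, nonempty; since regular elements are open and dense, the two sets meet.

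There is, however, a genuine error in your justification of that last point. You write that ``faithfulness of $\pi$ guarantees $\pi(X_\lambda)$ invertible on a nonempty open set of regular elements'' and that ``a faithful representation of a semisimple Lie algebra has $\ker\pi(X)=\{0\}$ for generic $X$.'' This is false. For instance, $\mathfrak{so}(3)$ acting on $\mathbb{R}^3$ (or $\mathfrak{so}(3)\oplus\mathfrak{so}(3)$ acting on $\mathbb{R}^3\oplus\mathbb{R}^3$, if you want $V$ even-dimensional) is faithful, yet $\pi(X)$ is never invertible. In those examples $N(\mathfrak{g},V)$ has \emph{no} square integrable representation, which is precisely why the theorem carries that hypothesis. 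So you must invoke the assumption ``$N$ has a square integrable representation'' to get nonemptiness of $\{X:\pi(X)\text{ invertible}\}$; faithfulness alone does not do it. Once you replace your faithfulness appeal by this hypothesis (and the observation that the invertibility locus is Zariski-open), your argument is complete and coincides with the paper's.

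A minor remark: your discussion of $G$-conjugation and the compatibility of $\omega_\lambda$ with $\omega_{\lambda'}$ is more than the paper spells out; the paper simply says ``$C_G(X_\lambda)$ contains a maximal torus of $G$'' and leaves the conjugacy implicit. Your more careful treatment is fine, but not strictly necessary here.
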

\begin{proof}
If $(\omega\otimes\tau)_{|_{T\times U}}$ is  multiplicity free, then for each $X_\lambda\in\mathfrak{g}$, $(\omega\otimes\tau)_{|_{K_\lambda}}$ is multiplicity free since $C_G(X_\lambda)$ contains a maximal torus of $G$.
\\ \\
Now we assume that $(K,N,\tau)$ is a commutative triple. We recall that  $X\in\mathfrak{g}$ is said to be a regular element of $\mathfrak{g}$ when $C_{G}(X)$ is a maximal torus  and it is very well known that the set of regular elements of $\mathfrak{g}$ is open and dense in $\mathfrak{g}$.  We only need to find a regular element in $\mathfrak{g}$ corresponding to a square integrable representation of $N$. 
\\ \\
For each $\lambda\in\mathfrak{n}^*$ there is defined a canonical function $P(\lambda)$ which is a homogeneous polynomial on $\mathfrak{n}^*$  called  the \textit{Pfaffian}, which  only depends on $\lambda_{|_{\mathfrak{g}}}$ (for a reference see \cite{Wolf}). It is proved in   \cite[Theorem 14.2.10]{Wolf} that there exists a bijection from $\{\lambda\in \mathfrak{g}^*| \ P(\lambda)\neq 0 \}$ onto $\widehat{N_\mathrm{sq}}$ given by
\begin{equation*}\label{mapa pfaffiano}
\phi(\lambda_{|_\mathfrak{g}})=\rho_\lambda
\end{equation*}
Moreover, Theorem 14.2.14 in \cite{Wolf} asserts that the Plancherel measure of $N$ is concentrated on $\widehat{N_\mathrm{sq}}$ and that its image under the map $\phi^{-1}$ is a positive multiple of $|P(X)|dX$, where $dX$ is the Lebesgue measure in $\mathfrak{g}$.
\\ \\
In particular, the set $\{\lambda\in \mathfrak{g}^*| \ P(\lambda)\neq 0 \}$  is in correspondence  with the set $\{X_\lambda\in \mathfrak{g}| \ \pi(X_\lambda) \text{ is invertible}\}$. Since it is an open set, there is a regular element on it. 
\end{proof}

\begin{corollary}\label{Klambda con regulares}
Let $\mathfrak{g}$ be a non-abelian compact Lie algebra (i.e. $\mathfrak{g}'\not\equiv \{0\}$). Assume that there exists $X\in \mathfrak{g}'$ such that $\pi(X)$ is invertible. The triple $(K,N,\tau)$ is  commutative  if and only if
$(\omega\otimes\tau)_{|_{T\times U}}$ is  multiplicity free, where $T\subset G$ is a maximal torus of $G$.
\end{corollary}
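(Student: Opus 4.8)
The plan is to adapt the proof of Theorem \ref{torus}, the only new ingredient being the bookkeeping of the abelian summand $\mathfrak{c}$ of $\mathfrak{g}=\mathfrak{g}'\oplus\mathfrak{c}$. For $\lambda\in\mathfrak{n}^*$ write its representative on the center as $X_\lambda=X_\lambda'+c_\lambda$ with $X_\lambda'\in\mathfrak{g}'$ and $c_\lambda\in\mathfrak{c}$. Since $U$ acts trivially on $\mathfrak{g}$ and $G$ acts on $\mathfrak{g}$ through the adjoint action of its semisimple part (hence trivially on $\mathfrak{c}$), one has $C_G(X_\lambda)=C_G(X_\lambda')$, and therefore $K_\lambda=C_G(X_\lambda')\times U$. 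Moreover the hypothesis provides $X\in\mathfrak{g}'\subset\mathfrak{g}$ with $\pi(X)$ invertible, so $\Ker(\pi(X))=\{0\}$ and $N$ has a square integrable representation; hence Theorem \ref{heisenberg}$(ii)$ applies, and $(K,N,\tau)$ is commutative if and only if $(\omega\otimes\tau)_{|_{K_\lambda}}$ is multiplicity free for every $\rho_\lambda\in\widehat{N_\mathrm{sq}}$.

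For the implication $(\Leftarrow)$, suppose $(\omega\otimes\tau)_{|_{T\times U}}$ is multiplicity free and fix $\rho_\lambda\in\widehat{N_\mathrm{sq}}$. Since $\mathfrak{g}'$ is compact, $X_\lambda'$ lies in a maximal abelian subalgebra, so $C_G(X_\lambda')$ contains a maximal torus of $G$, which up to conjugacy we may assume to be $T$ (the conclusion does not depend on the chosen maximal torus, and conjugation by an element of $K$, transported to the metaplectic side by the orthogonal transformation $\pi(g)$ of $V$ exactly as in the reduction arguments of Theorems \ref{heisenberg} and \ref{torus}, preserves multiplicities). The restriction of $(\omega\otimes\tau)_{|_{K_\lambda}}$ to the subgroup $T\times U\subset K_\lambda$ then equals $(\omega\otimes\tau)_{|_{T\times U}}$, which is multiplicity free; so if some irreducible of $K_\lambda$ occurred in $(\omega\otimes\tau)_{|_{K_\lambda}}$ with multiplicity $\geq 2$, any irreducible constituent of its further restriction to $T\times U$ would occur there with multiplicity $\geq 2$, a contradiction. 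Hence $(\omega\otimes\tau)_{|_{K_\lambda}}$ is multiplicity free for every $\rho_\lambda\in\widehat{N_\mathrm{sq}}$, and $(K,N,\tau)$ is commutative.

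For the implication $(\Rightarrow)$, by the reduction above together with the same conjugation remark it suffices to produce a single $\rho_\lambda\in\widehat{N_\mathrm{sq}}$ for which $C_G(X_\lambda')$ is a maximal torus of $G$, i.e. for which $X_\lambda'$ is a regular element of $\mathfrak{g}'$: for such a $\lambda$ one has $(\omega\otimes\tau)_{|_{K_\lambda}}=(\omega\otimes\tau)_{|_{T\times U}}$ up to conjugacy, and this is multiplicity free because $(K,N,\tau)$ is commutative. Now $\{Y\in\mathfrak{g}'\ :\ \pi(Y)\ \text{invertible}\}$ is open in $\mathfrak{g}'$ and nonempty by hypothesis, whereas the set of regular elements of $\mathfrak{g}'$ is open and dense in $\mathfrak{g}'$; their intersection is therefore nonempty, and any $Y$ in it, taken with $c_\lambda=0$, gives the required square integrable $\rho_\lambda$.

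The one delicate point, used in both directions, is the conjugation step: because the metaplectic representation $\omega=\omega_\lambda$ depends a priori on $\lambda$ through the symplectic form $B_\lambda$ on $V$ (equivalently, through the induced complex structure on $V$), one must verify that multiplicity-freeness of $(\omega\otimes\tau)_{|_{T\times U}}$ for a fixed maximal torus is genuinely equivalent to multiplicity-freeness of $(\omega_\lambda\otimes\tau)_{|_{C_G(X_\lambda')\times U}}$ for every square integrable $\rho_\lambda$. This is handled by conjugating the Heisenberg data of $\rho_\lambda$ to those of a fixed reference square integrable representation by an orthogonal element of $K$, under which the metaplectic representations correspond — the same device already employed in the proof of Theorem \ref{heisenberg} — so no genuinely new idea is required.
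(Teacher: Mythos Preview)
Your argument is correct and follows the same route as the paper: the $(\Leftarrow)$ direction uses that $C_G(X_\lambda')$ contains a maximal torus, and the $(\Rightarrow)$ direction intersects the open set $\{Y\in\mathfrak{g}':\pi(Y)\text{ invertible}\}$ with the dense set of regular elements of $\mathfrak{g}'$ to produce the required $\lambda$. Your explicit bookkeeping of the abelian summand (writing $X_\lambda=X_\lambda'+c_\lambda$ and noting $C_G(X_\lambda)=C_G(X_\lambda')$) is a useful clarification that the paper leaves implicit; on the other hand, your ``delicate point'' about $\omega_\lambda$ versus $\omega$ is already absorbed by the paper's standing convention, stated just after Theorem~\ref{heisenberg}, that one works with the single metaplectic action $\omega$ on $\mathcal{P}(\C^m)$ and restricts it to $K_\lambda$, so no separate conjugation argument is needed here.
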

\begin{proof}
As in the previous proof, if $(\omega\otimes\tau)_{|_{T\times U}}$ is  multiplicity free, $(K,N,\tau)$ is  commutative.
Conversely, by the hypothesis  $\{X \in \mathfrak{g}' | \ \pi(X) \text{ is invertible} \}$ is a non-empty open set, so it has a regular element and the conclusion follows as in Theorem \ref{torus}. %let $X\in\mathfrak{g}'$ such that $\pi(X)$ is invertible. 
%and let $\lambda\in\mathfrak{n}^*$ given by $\lambda(Y):=\langle Y,X\rangle$ for all $Y\in\mathfrak{g}$. 
\end{proof}

%%%%%%%%%%%%%%%%%%%%%%%%%%%LISTA LAURET%%%%%%%%%%%%%%%%%%%%%
%%%%%%%%%%%%%%%%%%%%%%%%%%%%%%%%%%%%%%%%%%%%%%%%%%

 The group $N(\mathfrak{g}, V)$ is said \textit{decomposable} if $N(\mathfrak{g}, V)$ is a direct product of Lie groups of the form
 $$N(\mathfrak{g}, V)=N(\mathfrak{h}_1, V_1)\times N(\mathfrak{h}_2, V_2).$$
% where $\mathfrak{n}_1=\mathfrak{h}_1\oplus V_1$ and $\mathfrak{n}_2=\mathfrak{h}_2\oplus V_2$ are Lie subalgebras of $\mathfrak{n}$ such that $\mathfrak{n}=\mathfrak{n}_1\oplus\mathfrak{n}_2$ and $[\mathfrak{n}_1,\mathfrak{n}_2]=0$. 
Otherwise, we will say that $N(\mathfrak{g},V)$ is \textit{decomposable}.
\\ 

 We now recall the classification, due to J. Lauret, of all the Gelfand
pairs of the form $(G\times U, N(\mathfrak{g},V))$ where $N(\mathfrak{g},V)$ is indecomposable and has a square integrable representation  (cf. \cite[Remark 3]{Lauret}). 

\begin{itemize}
\item[(I)] $\left(\SU(2)\times \Sp(n), N(\mathfrak{su}(2),(\mathbb{C}^2)^n)\right), \ n\geq 1$ (Heisenberg-type), where $\mathfrak{su}(2)$ acts on $(\C^2)^n$ as $Im(\HH)$ 
acts component-wise on $\HH^n$  by the quaternion product  on the left side, where $\HH$ denotes the quaternions and $Im(\HH)$ the imaginary quaternions.

\item[(II)] $(\Spin(4)\times \Sp(k_1)\times Sp(k_2), N(\mathfrak{su}(2)\oplus\mathfrak{su}(2), (\mathbb{C}^2)^{k_1}\oplus\R^4 \oplus(\mathbb{C}^2)^{k_2})), \ k_1+k_2\geq 1$, where the real vector space $\R^4=(\C^2\otimes\C^2)_\R$ denotes the standard representation of $\mathfrak{so}(4)=\mathfrak{su}(2)\oplus\mathfrak{su}(2)$ and the first copy of $\mathfrak{su}(2)$ acts only on  $(\mathbb{C}^2)^{k_1}$ and the second one only on  $(\mathbb{C}^2)^{k_2}$.

\item[(III)] $(\Sp(2)\times \Sp(n), N(\mathfrak{sp}(2), (\mathbb{C}^4)^n)), \ n\geq 1$, where $\mathfrak{sp}(2)$ acts component-wise on $(\HH^2)^n$  in the standard way  (identifying $\HH^2$ with $\C^4$).

\item[(IV)] $(\SO(2n), N(\mathfrak{so}(2n),\mathbb{R}^{2n})), \ n\geq 2$ (free two-step nilpotent Lie groups), where $\mathbb{R}^{2n}$ denotes the standard representation of $\mathfrak{so}(2n)$.

\item[(V)] $(\SU(n)\times \Sphere^1, N(\mathfrak{su}(n),\mathbb{C}^n)), \ n\geq 3$, where $\mathbb{C}^n$ denotes the standard representation of $\mathfrak{su}(n)$ regarded as a real representation.

\item[(VI)] $(\SU(n)\times \Sphere^1, N(\mathfrak{u}(n),\mathbb{C}^n)), \ n\geq 3$, where $\mathbb{C}^n$ denotes the standard representation of $\mathfrak{u}(n)$ regarded as a real representation.

\item[(VII)] $(\SU(2)\times \UU(k)\times \Sp(n), N(\mathfrak{u}(2),(\mathbb{C}^2)^k\oplus(\mathbb{C}^2)^n)), \ k\geq 1, n\geq 0$, where the center of $\mathfrak{u}(2)$ acts non-trivially only on $(\mathbb{C}^2)^k$, in fact, $(\mathbb{C}^2)^n$ denotes the representation of $\mathfrak{su}(2)$ described in  item (I) and $\mathfrak{u}(2)$ acts component-wise on $(\mathbb{C}^2)^k$ in the standard way regarded as a real representation.

\item[(VIII)] $(G\times U,N(\mathfrak{g},V))$ where:
\begin{itemize}
\item $\mathfrak{g}:=\mathfrak{su}(m_1)\oplus...\oplus\mathfrak{su}(m_\beta)\oplus\mathfrak{su}(2)\oplus...\oplus\mathfrak{su}(2)\oplus \mathfrak{c}$, with $\alpha$  copies of $\mathfrak{su}(2)$, $m_i\geq 3$ for all $1\leq i\leq \beta$ and $\mathfrak{c}$ is an abelian component.
\item $V:=\C^{m_1}\oplus...\oplus \C^{m_\beta}\oplus\C^{2k_1+2n_1}\oplus...\oplus\C^{2k_\alpha+2n_\alpha}$, where $k_j\geq 1$ and $n_j\geq 0$ for all $1\leq j\leq \alpha$.
\item $\mathfrak{g}$ acts on $V$ as follows: for each $1\leq i\leq \beta+\alpha$, $\mathfrak{c}$ has a maximal subspace, denoted by $\mathfrak{c}_i$ and with $\dim(\mathfrak{c}_i)=1$,  acting non-trivially only on one component of $V$. For $1\leq i\leq \beta$,
 $\mathfrak{su}(m_i)\oplus \mathfrak{c}_i$  acts non-trivially only on  $\C^{m_i}$ (as the representation stated in item (VI)) and for $\beta+1<i\leq \beta+\alpha $, $\mathfrak{su}(2)\oplus\mathfrak{c}_i$  acts non-trivially only on $\C^{2k_i+2n_i}$ (as the representation stated in item (VII)).
\item $U:=\Sphere^1\times...\times \Sphere^1\times \UU(k_1)\times \Sp(n_1)\times...\times \UU(k_\alpha)\times \Sp(n_\alpha)$, with $\beta$ copies of $\Sphere^1$.
\end{itemize}
% $(\SU(m_1)\times ...\times \SU(m_r)\times U_1\times...\times U_r, N(\mathfrak{su}(m_1)\oplus...\oplus\mathfrak{su}(m_r)\oplus\mathfrak{c}, V_1\oplus ... \oplus V_r))$, where $\mathfrak{c}$ is an abelian component of dimension $1 \leq \dim(\mathfrak{c})< r$; for $m_i\geq 3$,  $V_i=\C^{m_i}$ and $U_i=\Sphere^1$ and for $m_i=2$,  $V_i=(\C^2)^{k_i}\times (\C^2)^{n_i}$ and $U_i=\UU(k_i)\times Sp(n_i)$.

\item[(IX)] $(\UU(n), N(\R,\mathbb{C}^n)), \ n\geq 1$ (Heisenberg group).
\end{itemize}

\begin{remark}
We observe that for all the cases stated in the above list there exists $X\in\mathfrak{g}'$ satisfying $\Ker(\pi(X))=\{0\}.$
\end{remark}

Our goal is to consider, for each case, an arbitrary irreducible unitary representation $\tau$ of $K=G\times U$ and determine if the resulting triple is commutative. The last item of the list
was studied in \cite{Yakimova} where it was proved that the triples $(\UU(n),N(\R,\mathbb{C}^n),\tau)$ are commutative for all $\tau\in\widehat{\UU(n)}$, i.e., when $N$ is the Heisenberg group and $K$ is the unitary group we have a strong Gelfand pair.
\\ 

We remark that we are excluding two of the cases given in  \cite[Remark 3]{Lauret} since the corresponding nilpotent group does not have square integrable representations. They still may give rise to commutative triples.

 %%%%%%%%%%%%%%%%%%%%%%%%%%%%%%%%%%%%%%%%%%%%%%%%%%%%%%%%%%%
 %%%%%%%%%%%%%%%%%%%%%%%%%%%%%%%%%%%%%%%%%%%%%%%%%%%%%%%%%%%

\subsection{Background about tensor product of irreducible representations relative to the symplectic group} \label{subsec tensor product of rep}

\quad  A \textit{partition} is a monotone decreasing finite sequence $\sigma=(\sigma_1,\sigma_2,...)$ where its entries $\sigma_i$ are nonnegative integers. The set of all partitions is denoted by $\mathbb{P}$. The \textit{length} of $\sigma$, denoted by $l(\sigma)$, is the number of nonzero entries of $\sigma$, and the \textit{size} of $\sigma$, denoted by $|\sigma|$, is the sum of its entries. A partition $\sigma$ is often identified with its \textit{Young diagram}, which is a left-justified array of $|\sigma|$ cells with $\sigma_i$ cells in the $i$-th row. The conjugate partition $\sigma^t$
of $\sigma$ is the partition whose diagram is obtained 
by reflecting the diagram of $\sigma$ along the main diagonal. For two partitions $\sigma$ and $\varsigma$, we write $\varsigma\subset\sigma$ if $\varsigma_i\leq\sigma_i$ for all $i$ and the skew diagram $\sigma /\varsigma $ is defined to be the set  that remains when we make the difference of the two diagrams. The size of this skew diagram is defined by $|\sigma / \varsigma|=|\sigma|-|\varsigma|$. The skew diagram $\sigma / \varsigma$ is said to be a \textit{horizontal $k$-strip} if it contains at most one cell in each column and $|\sigma / \varsigma |=k$. Note that $\sigma / \varsigma$ is a horizontal strip if and only if $\sigma_1\geq\varsigma_1\geq\sigma_2\geq\varsigma_2\geq...$ .
% \\ \\
% \underline{About $\mathfrak{sp}(n,\C)$}
\\

Let $\mathfrak{h}$ be the Cartan subalgebra of $\mathfrak{sp}(n)$ consisting of the diagonal matrices (over $\C$) of the form
\begin{equation*}
\mathfrak{h}=\left\lbrace H=\begin{pmatrix}  
  h_1&    \\
     & \ddots &    \\
     &        & h_n&  \\
     &        &    & -h_1 & \\
     &        &    &      & \ddots &\\
     &        &    &      &        &  -h_n
\end{pmatrix} | \ h_1, ..., h_n \in \C \right\rbrace .
\end{equation*}
Let $\delta_{i,j}$ denote the Kronecker delta. Writing 
$H_i:=\delta_{i,i}-\delta_{n+i,n+i}$, we have that $\mathfrak{h}$ is a complex vector space generated by $\{ H_1,...,H_n \}$. Let 
$\{L_1,...,L_n\}$
be its dual basis in the dual space $\mathfrak{h}^*$, so $\langle L_i,H\rangle=h_i$ for all $H\in\mathfrak{h}$. The fundamental weights of $\mathfrak{sp}(n)$ are given by $L_1, L_1+L_2,..., L_1+...+L_n$ and the simple roots are $L_1-L_2, ,..., L_{n-1}-L_n, 2L_n$. From the theorem of the highest weight,  every  irreducible representation of $\mathfrak{sp}(n)$ is in correspondence with a nonnegative integer linear combination of the fundamental weights. Hence $\eta \in \widehat{\Sp(n)}$ can be parametrized in terms of the weights $\{L_i\}$ as $(\eta_1,...,\eta_n)$  where  $\eta_i\in\Z_{\geq 0}$ $\forall i$ and $\eta_1\geq \eta_2\geq ... \geq \eta_n$. (For a reference see, for example, \cite{Knapp} or \cite{Fulton y Harris}.)
\\ 

By abuse of notation, for each $\eta\in\widehat{\Sp(n)}$ we will denote its partition with the same letter $\eta$. %(given by it heights weight in terms of the fundamentals weights of $\mathfrak{sp}(n)$). 
Apart from that, we will denote the representation $\eta\in\widehat{\Sp(n)}$ by $\eta_{(\eta_1,...,\eta_n)}$ to emphasize that the representation $\eta$ is in correspondence with the partition $(\eta_1,...,\eta_n)$.  
\\ 

The decomposition of the tensor product of irreducible representations of $\mathfrak{sp}(n)$ is developed in \cite{Koike} and in the recent works \cite{Howe} and \cite{Okada}.
\\ 

The irreducible unitary representations of $\Sp(n)$ on the space of the homogeneous polynomials over $\C^{2n}$ of degree $r$ and $s$ respectively are associated to the partitions $(r)$ and $(s)$ of length one and they are denoted by $\eta_{(r)}$ and $\eta_{(s)}$. The formula of the decomposition of their tensor product is given in \cite[page 510 ]{Koike} for $r\geq s$:
\begin{equation}\label{tensor de simetricas en sp}
\eta_{(r)}\otimes\eta_{(s)}=\bigoplus_{j=0}^s\bigoplus_{i=0}^j\eta_{(r+s-j-i,j-i)}.
\end{equation}
% Note that the above formula generalizes the Clebsch-Gordan formula for $\Sp(1)$:
% \begin{equation}\label{Clebsch-Gordan formula}
% \eta_{(r)}\otimes\eta_{(s)}=\bigoplus_{j=0}^s\eta_{(r+s-2j)}.
% \end{equation}

The tensor product of a %fundamental 
representation $\eta_{(1,...,1)}$ of length $r>1$ (denoted also by $\eta_{(1^r)}$) by a representation $\eta_{(s)}$ (with $s>1$) is  given in \cite[page 510]{Koike}  as well:
\begin{equation}\label{tensor de fundamental y simetrica}
\eta_{(1^r)}\otimes \eta_{(s)}=
\eta_{(s+1,1^{r-1})}\oplus\eta_{(s,1^r)}\oplus
\eta_{(s-1,1^{r-1})}\oplus\eta_{(s,1^{r-2})}.
\end{equation}

%The tensor product of two fundamental representations is also given in \cite{Koike} page 510.
%In \cite{Howe} Proposition 5.4 and \cite{Howe1} 2.1.3 it is developed the tensor product of two representations of $\Sp(n)$ assuming a special condition called \textit{the stable range condition}.
%\\ 

Finally, we enunciate a “universal” Pieri rule due to S. Okada.
\begin{theorem}\cite[Proposition 3.1]{Okada}
For an arbitrary irreducible representation (or partition) $\eta$ and a nonnegative integer $s$,
\begin{equation}\label{universal Pieri law}
\eta\otimes \eta_{(s)} \ =\sum_{\sigma\in\mathbb{P} \ \text{of length at most } n}M_{\eta,s}^\sigma \ \eta_{(\sigma)},
\end{equation}
where $M_{\eta,s}^\sigma$ denotes the number of partitions $\varsigma$ such that $\eta / \varsigma$ and $\sigma / \varsigma$  are horizontal strips and $|\eta / \varsigma|+|\sigma / \varsigma|=s$.
\end{theorem}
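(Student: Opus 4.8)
The plan is to prove the identity first in the ring $\Lambda$ of symmetric functions (the universal characters) and then specialize to $\Sp(n)$. At the stable level $\eta_{(s)}$ corresponds to the complete homogeneous symmetric function $h_s$ (a one-row partition needs no symplectic straightening, so the universal symplectic character of $(s)$ is just $s_{(s)}=h_s$), and the claim becomes the product formula
\begin{equation*}
sp_\eta \cdot h_s \;=\; \sum_{\sigma} M_{\eta,s}^{\sigma}\, sp_{\sigma},
\end{equation*}
now with $\sigma$ ranging over \emph{all} partitions. This is the one-row case $\mu=(s)$ of the Newell--Littlewood rule $sp_\eta\cdot sp_\mu=\sum_\nu N^\nu_{\eta\mu}\,sp_\nu$, and the point is that the coefficient collapses: in $N^\sigma_{\eta,(s)}=\sum_{\alpha,\beta,\gamma}c^\eta_{\alpha\beta}\,c^{(s)}_{\alpha\gamma}\,c^\sigma_{\beta\gamma}$ the factor $c^{(s)}_{\alpha\gamma}$ is nonzero only when $\alpha=(a)$ and $\gamma=(g)$ are single rows with $a+g=s$ (and then equals $1$), so two applications of the classical Pieri rule for $\mathrm{GL}$ identify $N^\sigma_{\eta,(s)}$ with the number of partitions $\varsigma$ such that $\eta/\varsigma$ and $\sigma/\varsigma$ are horizontal strips and $|\eta/\varsigma|+|\sigma/\varsigma|=s$, i.e.\ with $M^\sigma_{\eta,s}$.

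Two self-contained variants avoid quoting Newell--Littlewood. One can start instead from the Koike--Terada (dual) Jacobi--Trudi determinant for $sp_\eta$, multiply it by $h_s$, distribute $h_s$ across a row, expand each elementary product by the ordinary Pieri rule, and check that the resulting alternating sum regroups as $\sum_\sigma M^\sigma_{\eta,s}\,sp_\sigma$ --- the computation parallels the classical proof of the $\mathrm{GL}$ Pieri rule from Jacobi--Trudi, with the extra folded terms of the symplectic determinant absorbed by a matching telescoping cancellation. Alternatively, one tensors iteratively with the vector representation $\eta_{(1)}$ (whose tensor rule merely adds or removes one box), expands $\eta_{(1)}^{\otimes s}$ in terms of symplectic oscillating (up-down) tableaux, and extracts the $\mathrm{Sym}^s$-isotypic component by inclusion--exclusion over the symmetric powers; this is the bijective viewpoint tied to Okada's count of generalized oscillating tableaux.

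It then remains to descend from $\Lambda$ to the representation ring of $\Sp(n)$. Specializing, $sp_\sigma$ becomes the irreducible character $\eta_{(\sigma)}$ whenever $\ell(\sigma)\le n$; and since $\ell(\eta)\le n$ while every $\sigma$ above satisfies $\ell(\sigma)\le \ell(\varsigma)+1\le \ell(\eta)+1$, the only terms needing attention have $\ell(\sigma)=n+1$. For these the symplectic modification rule (Koike--Terada, King) rewrites $sp_\sigma$ at rank $n$ as $0$ or as $\pm$ an irreducible character of length $\le n$. The main obstacle --- the step carrying the genuine content beyond the classical Pieri rule --- is to show that these overflow contributions cancel, so that what survives is exactly the sum in the statement, taken over $\ell(\sigma)\le n$ and with all coefficients positive. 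This is a finite combinatorial verification: one builds a sign-reversing involution on the length-$(n+1)$ terms, pairing each such $\sigma$ with the shorter partition produced by its straightening, using that the horizontal-strip conditions force $\sigma_{n+1}\le\eta_n$ and thereby confine the available straightening moves to a controlled, cancelling family.
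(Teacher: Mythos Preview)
The paper does not prove this theorem at all: it is quoted verbatim from \cite[Proposition~3.1]{Okada} and used as a black box to derive Corollaries~\ref{coro 1 sp} and~\ref{coro 2 sp}. There is therefore no ``paper's own proof'' to compare against; your proposal is an attempt to supply a proof where the authors deliberately chose to cite one.

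On the substance of your sketch: the stable (universal-character) part is correct and standard. Identifying $\eta_{(s)}$ with $h_s$, collapsing the Newell--Littlewood coefficient $N^{\sigma}_{\eta,(s)}$ via $c^{(s)}_{\alpha\gamma}=\delta_{\alpha,(a)}\delta_{\gamma,(s-a)}$, and reading off the horizontal-strip description of $M^{\sigma}_{\eta,s}$ from two applications of the $\mathrm{GL}$ Pieri rule is exactly right. Where your argument becomes a genuine sketch rather than a proof is the descent to finite rank. You correctly isolate the only possible overflow at $\ell(\sigma)=n+1$, but the sentence ``one builds a sign-reversing involution on the length-$(n{+}1)$ terms, pairing each such $\sigma$ with the shorter partition produced by its straightening'' is an assertion, not a construction: you have not specified the involution, checked it is well-defined on the set of triples $(\sigma,\varsigma)$ contributing to the sum, or verified that the straightened partner satisfies the same horizontal-strip constraints with the correct sign. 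That bookkeeping is the actual content of the finite-rank statement, and it is not automatic --- the symplectic modification rule can in principle produce collisions that fail to pair off cleanly unless one tracks the strip conditions carefully. If you want a complete proof along these lines you should either carry out that involution explicitly or, as Okada does, work directly with symplectic tableaux models where the length restriction is built in from the start.
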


\begin{corollary}\label{coro 1 sp} Let $\eta$ be
an arbitrary irreducible representation of $\Sp(n)$, then
$\eta$ appears in the decomposition into irreducible factors of $\eta\otimes\eta_{(2)}$.
\end{corollary}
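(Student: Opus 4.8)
The plan is to apply the universal Pieri rule (formula \eqref{universal Pieri law}) with $s=2$ and show that the coefficient $M_{\eta,2}^{\eta}$ is at least $1$, i.e.\ that $\eta$ occurs in $\eta\otimes\eta_{(2)}$. By the theorem of S. Okada, $M_{\eta,2}^{\eta}$ counts the partitions $\varsigma$ with $\varsigma\subset\eta$ such that both skew diagrams $\eta/\varsigma$ and $\eta/\varsigma$ (the same one here, since $\sigma=\eta$) are horizontal strips and $|\eta/\varsigma|+|\eta/\varsigma|=2$; equivalently $|\eta/\varsigma|=1$ and $\eta/\varsigma$ is a horizontal $1$-strip.

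First I would observe that if $\eta$ is the zero partition the statement is trivial (then $\eta\otimes\eta_{(2)}=\eta_{(2)}$ does \emph{not} contain $\eta$; but $\Sp(n)$ has rank $n\geq 1$ and the corollary is applied only for genuine representations, and in any case for $\eta\neq 0$ the argument below works). Assuming $\eta=(\eta_1,\dots,\eta_n)$ is a nonzero partition of length at most $n$, let $k=l(\eta)$ be its length, so $\eta_k\geq 1$. Define $\varsigma:=(\eta_1,\dots,\eta_{k-1},\eta_k-1,0,\dots,0)$, i.e.\ $\varsigma$ is $\eta$ with the last nonzero entry decreased by one. Then $\varsigma\subset\eta$, $|\eta/\varsigma|=1$, and $\eta/\varsigma$ consists of the single cell in row $k$, which trivially contains at most one cell per column, hence is a horizontal $1$-strip. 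Therefore this $\varsigma$ contributes $1$ to the count $M_{\eta,2}^{\eta}$, giving $M_{\eta,2}^{\eta}\geq 1$, which is exactly the claim.

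The only subtle point is checking that $\varsigma$ so defined is actually a partition (monotone decreasing) and has length at most $n$: decreasing the last nonzero entry $\eta_k$ by one cannot violate $\eta_{k-1}\geq\varsigma_k$ since $\eta_{k-1}\geq\eta_k>\eta_k-1=\varsigma_k$, and the length only drops or stays the same, so it remains $\leq n$. Thus there is no real obstacle; the argument is a direct unwinding of the Pieri rule. (Alternatively, for $\eta$ of length one one can read the claim straight off formula \eqref{tensor de simetricas en sp} with $r=\eta_1\geq 2$, $s=2$, taking $j=2$, $i=0$, which yields the summand $\eta_{(\eta_1)}$; and for $\eta=\eta_{(1^r)}$ it follows from \eqref{tensor de fundamental y simetrica} with $s=2$, which contains $\eta_{(2,1^{r-2})}$ but one should instead note $\eta\otimes\eta_{(2)}\supset\eta\otimes\eta_{(1)}\otimes\eta_{(1)}\ominus\cdots$—so the clean uniform proof is the Pieri one above.)
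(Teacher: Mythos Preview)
Your argument is correct and follows essentially the same approach as the paper: apply Okada's universal Pieri rule \eqref{universal Pieri law} with $s=2$ and exhibit the partition $\varsigma$ obtained from $\eta$ by decreasing its last nonzero entry by one, so that $\eta/\varsigma$ is a horizontal $1$-strip and $M_{\eta,2}^{\eta}\geq 1$. Your added care in verifying that $\varsigma$ is a genuine partition, and your observation that the statement tacitly excludes the trivial $\eta$ (which is the only case in which the claim fails, and which is never used in the applications), are both well taken.
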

\begin{proof}
We use the “universal” Pieri rule with $s=2$.  The cardinal number
$M_{\eta,2}^\eta$ is not zero since considering  $\varsigma=(\eta_1,...,\eta_{m}-1)$ we have that $\eta / \varsigma$ is a horizontal strip (there is only one box in the Young diagram) and $2|\eta / \varsigma|=2$.
\end{proof}

\begin{corollary}\label{coro 2 sp}
Let $\eta\in\widehat{\Sp(n)}$ corresponding to the partition $(\eta_1,...,\eta_m)$, with $\eta_m\neq 0$, $0\leq m\leq n$. The representation $\eta\otimes\eta_{(s)}$ is multiplicity free for all $s\in\Z_{\geq 0}$ if and only if $\eta_i=\eta_j$ for all $1\leq i,j\leq m$.%  (i.e. with a rectangle as its Young diagram associated).
\end{corollary}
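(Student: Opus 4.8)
The plan is to invoke Okada's universal Pieri rule \eqref{universal Pieri law}, which expresses $\eta\otimes\eta_{(s)}=\sum_{\sigma}M_{\eta,s}^{\sigma}\,\eta_{(\sigma)}$; consequently $\eta\otimes\eta_{(s)}$ is multiplicity free if and only if $M_{\eta,s}^{\sigma}\le 1$ for every partition $\sigma$ (of length at most $n$) and every $s\in\Z_{\ge 0}$. Recall that $M_{\eta,s}^{\sigma}$ counts the partitions $\varsigma$ such that $\eta/\varsigma$ and $\sigma/\varsigma$ are horizontal strips with $|\eta/\varsigma|+|\sigma/\varsigma|=s$. I will establish the two implications separately.

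\emph{Sufficiency.} Assume $\eta=(c^{m})$ with $c\ge 1$ (the case $m=0$ is trivial). Fix $\sigma$ and $s$, and let $\varsigma$ be counted by $M_{\eta,s}^{\sigma}$. Since $\eta/\varsigma$ is a horizontal strip and $\varsigma\subset\eta=(c^{m})$, the interlacing conditions $\eta_i\ge\varsigma_i\ge\eta_{i+1}$ force $\varsigma_1=\dots=\varsigma_{m-1}=c$ and $\varsigma_i=0$ for $i>m$; thus $\varsigma=(c^{m-1},t)$ with $0\le t\le c$ (read as $\varsigma=(t)$ when $m=1$). Then $|\eta/\varsigma|=c-t$ and $|\sigma/\varsigma|=|\sigma|-c(m-1)-t$, so the requirement $|\eta/\varsigma|+|\sigma/\varsigma|=s$ is a single linear equation in $t$, with at most one solution. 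Hence $M_{\eta,s}^{\sigma}\le 1$ for all $\sigma,s$, and $\eta\otimes\eta_{(s)}$ is multiplicity free.

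\emph{Necessity.} Assume $\eta$ is not rectangular, so there is $j$ with $1\le j\le m-1$ and $\eta_{j}>\eta_{j+1}$ (in particular $m\ge 2$). Writing $e_i$ for the $i$-th unit vector, set $\varsigma:=\eta-e_{j}$, $\varsigma':=\eta-e_{m}$ and $\sigma:=\eta+e_{1}$. Then $\varsigma,\varsigma',\sigma$ are partitions of length $\le m\le n$ and $\varsigma\ne\varsigma'$; moreover $\eta/\varsigma$ and $\eta/\varsigma'$ are single cells, hence horizontal strips of size $1$. The point is that $\sigma/\varsigma$ and $\sigma/\varsigma'$ are horizontal strips of size $2$: the cell of $\sigma$ lying outside $\eta$ sits in row $1$, in column $\eta_1+1$, strictly to the right of every column of $\eta$, while the cell removed from $\eta$ to form $\varsigma$ (respectively $\varsigma'$) lies in row $j$ (respectively row $m$); checking the interlacing chain $\sigma_1\ge\varsigma_1\ge\sigma_2\ge\varsigma_2\ge\cdots$ then reduces to the single inequality $\eta_j-1\ge\eta_{j+1}$ (respectively $\eta_m-1\ge 0$), which holds by the choice of $j$ (respectively because $\eta_m\ne 0$). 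Since $|\eta/\varsigma|+|\sigma/\varsigma|=3=|\eta/\varsigma'|+|\sigma/\varsigma'|$, both $\varsigma$ and $\varsigma'$ are counted by $M_{\eta,3}^{\sigma}$, whence $M_{\eta,3}^{\sigma}\ge 2$ and $\eta\otimes\eta_{(3)}$ is not multiplicity free.

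The arithmetic in the sufficiency step is immediate. The main obstacle is the bookkeeping in the necessity step: verifying that $\varsigma,\varsigma',\sigma$ are genuine partitions and that the skew shapes $\sigma/\varsigma$ and $\sigma/\varsigma'$ satisfy the interlacing characterization of horizontal strips. This amounts to a short case distinction (according to whether $j=1$ or $2\le j\le m-1$, and separately for the removal from the bottom row) that I expect to be routine. I would also record the slightly sharper fact that emerges from the proof: when $\eta$ is not rectangular, $\eta\otimes\eta_{(s)}$ already fails to be multiplicity free for $s=3$.
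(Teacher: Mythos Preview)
Your argument is correct and follows the same overall strategy as the paper: apply Okada's Pieri rule \eqref{universal Pieri law} and, in the rectangular case, observe that the intermediate partition $\varsigma$ is forced to be $(c^{m-1},t)$ so that the size constraint $|\eta/\varsigma|+|\sigma/\varsigma|=s$ becomes a linear equation in $t$ with at most one solution. That part matches the paper essentially verbatim.

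The one genuine difference is in the necessity step. You exhibit multiplicity in $\eta\otimes\eta_{(3)}$ by taking $\sigma=\eta+e_1$ and $\varsigma,\varsigma'\in\{\eta-e_j,\eta-e_m\}$. The paper instead takes $\sigma=\eta$ and the same two $\varsigma$'s, obtaining $M_{\eta,2}^{\eta}\ge 2$ directly (this is also the content of Corollary~\ref{coro 1 sp}). Their choice is a little cleaner because with $\sigma=\eta$ one has $\sigma/\varsigma=\eta/\varsigma$, so there is only one horizontal-strip condition to check rather than two, and it yields failure already at $s=2$ rather than $s=3$. So your parenthetical remark that $s=3$ is a ``slightly sharper fact'' should be dropped: the paper's argument gives the sharper bound $s=2$, and in fact the later applications in the paper (e.g.\ Proposition~\ref{teo ej i}) invoke precisely this $s=2$ case via Corollary~\ref{coro 1 sp}. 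Your necessity argument is perfectly valid for the corollary as stated, but you may wish to adopt the simpler $\sigma=\eta$ variant.
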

Note that, from (\ref{tensor de simetricas en sp}) and (\ref{tensor de fundamental y simetrica}), the corollary holds in the particular cases $\eta=\eta_{(r)}$ (for some $r\in\Z_{\geq 0}$) and $\eta=\eta_{(1^{r'})}$ (for some $r'\in\Z_{\geq 0}$).
\begin{proof}
We assume first that $s=2$ and that $\eta_i>\eta_{i+1}$, i.e. $\eta_i -1\geq\eta_{i+1}$, for some $1\leq i\leq m$. From (\ref{universal Pieri law}), $\eta$ appears in the decomposition of $\eta\otimes \eta_{(2)}$ at least twice: considering $\varsigma_1=(\eta_1,...,\eta_m-1)$ 
and $\varsigma_2=(\eta_{1},...,\eta_{i-1},\eta_i-1,\eta_{i+1},...,\eta_k)$ we have that $\eta /\varsigma_1$ and $\eta /\varsigma_2$ are 1-horizontal strips. 
\\ \\
Therefore we have proved that $\eta$ must satisfy necessarily the condition $\eta_i=\eta_j$ for all $1\leq i,j\leq m$. Now we will prove that this condition is sufficient.
\\ \\
Let  $(a,a,...,a)$ be the partition of length $0\leq m\leq n$ associated to $\eta$ for some $a\in\N$. Let $\varsigma$ be a partition such that $\eta / \varsigma$ is a horizontal strip. Since $\varsigma$ must satisfy $\eta_1\geq\varsigma_1\geq...\geq\varsigma_{m-1}\geq\eta_m\geq\varsigma_m$, we have that  $\varsigma$ is equal to $\varsigma_j:=(a,...,a,a-j)$ for some $0\leq j\leq a$. Now we fix an arbitrary nonnegative integer $s$ and let $\sigma$ be an irreducible representation which appears in the decomposition of $\eta\otimes\eta_{(s)}$. Accordingly, there is $0 \leq j\leq a$ such that $\sigma /\varsigma_j$ is an $(s-j)$-horizontal strip. Note that the partition $(\sigma_1,...,\sigma_l)$ associated to $\sigma$ must satisfy: $l\leq m+1$, $\sigma_2=...=\sigma_{m-1}=a$,  $\sigma_1\geq a$, $(a-j)\leq \sigma_{m}\leq a$ and $0\leq\sigma_{m+1}\leq (a-j)$. Therefore $|\sigma / \varsigma_j|=\sigma_1-a+\sigma_{m}-(a-j)+\sigma_{m+1}$ and thus $\sigma_1+\sigma_{m}+\sigma_{m+1}=s+2a-2j$. From the last equality, $\sigma$ appears only once in the decomposition of $\eta\otimes\eta_{(s)}$: indeed, if we assume that it appears two times (or more), then there are $\varsigma_j$ and $\varsigma_k$, as above, with $j\neq k$, such that $\sigma /\varsigma_j$ and $\sigma /\varsigma_k$ are $(s-j)$ and $(s-k)$ horizontal strips respectively, hence $s+2a-2j=s+2a-2k$ (so $j=k$) and we have a contradiction.
\end{proof}

\section{Case by case analysis}\label{section case by case}

 In this paragraph we explore each of the cases given in Section \ref{section torus} in order to derive commutative triples.

%\subsection{Groups that does not have square integrable representations}
%Example (II) of  Lauret's list:

%Here $\mathfrak{g}=\mathfrak{su}(2)$ and the action $(\pi,V)$ of $\mathfrak{su}(2)$ on $V=\R^3 \oplus(\mathbb{C}^2)^n$ ($n\geq 0$) is defined by: 
%\begin{itemize}
%\item[$(i)$] $\mathfrak{su}(2)$ acts on $\R^3$ as $\mathfrak{so}(3)$ acts naturally on $\R^3$ (through the isomorphism between $\mathfrak{so}(3)$ and $\mathfrak{su}(2)$),
%\item[$(ii)$] let $\HH$ denotes the quaternions, then $\mathfrak{su}(2)$ acts on $(\C^2)^n$ as the imaginary quaternions $Im(\HH)$ acts naturally on each coordinate of $\HH^n$ (through the isomorphism between $Im(\HH)$ and $\mathfrak{su}(2)$ and between $\HH^n$ and $(\C^2)^n$).
%\end{itemize}
%then $K=\SU(2)\times \Sp(n)$. 
%As on the first component $\mathfrak{g}$ acts as  antisymmetric transformations on $\R^3$ then $\pi(X)$ has non-trivial kernel for all $X\in \mathfrak{g}$.  

%%%%%%%%%%%%%%%%%%%heissenberg%%%%%%%%%%%%%%%%%%%%%%%%%%
%%%%%%%%%%%%%%%%%%%%%%%%%%%%%%%%%%%%%%%%%%%%%%%%%%%

%\subsection{Heisenberg groups}
%Let $H^n$ denotes the $n$-th Heisenberg group, $n\geq 1$. Ricci and Samanta have obtained in Corollary 10.4 in \cite{Fulvio} that the triple $(\UU(n),H^n,\tau)$ (item (VII) in the list of Lauret) is commutative for any $\tau\in \widehat{\UU(n)}$.

%%%%%%%%%%%%%%%%%%%%%%%%I%%%%%%%%%%%%%%%%%%%%%%%%%%
%%%%%%%%%%%%%%%%%%%%%%%%%%%%%%%%%%%%%%%%%%%%%%%%%%%

\subsection*{Case (I): Heisenberg type}

% Item (I) in the list exhibit by J. Lauret is in correspondence with the example Nº17 of table 13.4.1 in \cite{Wolf} and with item 15 of table 3 \cite{Vinberg}.
%\\ 

\quad Let $\HH$ denote the quaternions and $Im(\HH)$ the imaginary quaternions. For $n\geq 1$, the group $N(\mathfrak{su}(2),(\C^2)^n)$ is the H-type group with Lie algebra $Im(\HH)\oplus\HH^n$ where the real representation $\pi$ of $Im(\HH)$ on $\HH^n$ is given by the quaternion product on the left side on each component,   
\begin{equation}
\pi(z)(v):=(zv_1,...,zv_n) \quad (v=(v_1,...,v_n)\in\HH, \  z\in Im (\HH))
\end{equation}
i.e. $\mathfrak{su}(2)$ acts on $(\C^2)^n$ as $Im(\HH)$   acts naturally on each coordinate of $\HH^n$ (identifying $Im(\HH)$ with $\mathfrak{su}(2)$ and $\HH^n$ with $(\C^2)^n$).
\\

The unitary group of intertwining operators of $\pi$ is $\Sp(n)$ that acts on $\HH^n$ on the right side and  $K=\SU(2)\times \Sp(n)$. 
%or equivalently $K=S^3\times \Sp(n)$ (where also the sphere $S^3$ is isomorphic to $\Sp(1)$).  
A maximal torus on $\SU(2)$ is given by $\Toro^1:=\{\left(\begin{smallmatrix} e^{i\theta} & 0\\ 0 & e^{-i\theta} \end{smallmatrix}\right) | \ \theta\in\R \}$. Note that the action of $\Toro^1\times \Sp(n)$ on  $\mathcal{P}((\C^2)^{n})$ (the space of polynomials on $(\C^2)^n$) is $\C$-linear whereas the action of $K$ is not.
\\ 

%Tenemos $K=\SU(2)\times \Sp(n)$ y para $X_\lambda\in \mathfrak{su}(2)\simeq Im(\mathbb{H})$, $K_{X_\lambda}=\Toro^1\times \Sp(n)$. En este ejemplo $V=\mathfrak{su}(2)\simeq Im(\mathbb{H})$, por lo tanto el álgebra de Heisenberg es $N_{X_\lambda}=\R X_\lambda \oplus V$, $K_{X_\lambda}$ es subgrupo de los operadores  unitarios sobre $V$ que denotamos $\UU(V)$. Queremos ver cómo se mete $K_{X_\lambda}$ en $\UU(V)$ que en este caso es isomorfo a  $\UU(\C^{2n})$. Para ello debemos meterlo según como mande la representación fijada $J_{z}(v)=z v$ para todo $v\in V$, $z\in \Toro^1$. Como $\Toro^1=\{ \cos(\theta)1+\sin(\theta)X_\lambda \}$, $J_{z}=J_{\cos(\theta)1+\sin(\theta)X_\lambda }=\cos(\theta)I+\sin(\theta)(J_{X_\lambda})$ . Como  $J_{X_\lambda}=\begin{pmatrix} 0 & -1 \\ 1 &0 \end{pmatrix}$. Luego el toro se mete de la manera natural en $\UU(2n)$, es decir como $\begin{pmatrix} \cos(\theta) & -\sin(\theta)\\ \sin(\theta) & \cos(\theta) \end{pmatrix} I\in \mathrm{End}(\R^{4n})$ o equivalentemente como $e^{i\theta}I\in \mathrm{End}(\C^{2n})$.
The metaplectic representation $\omega$ of $\Toro^{1}\times \Sp(n)$ on  $\mathcal{P}((\C^2)^n)$ decomposes into
\begin{equation}
\omega=\bigoplus_{s \in\Z_{\geq 0} }\chi_{s}\otimes\eta_{(s)}, 
\end{equation}
where $\chi_{s}(\theta):=e^{-is\theta}$ and $\eta_{(s)}$ denotes the irreducible representation of $\Sp(n)$ on the space $\mathcal{P}_s((\C^2)^n)$ of homogeneous polynomials of degree $s$.
\\ 

A representation $\tau\in\widehat{\SU(2)\times \Sp(n)}$ is given by the tensor product $\tau=\nu_k\otimes \eta$, where $(\nu_k,\mathcal{P}_k(\C^{2}))\in \widehat{\SU(2)}$ is the well known representation of $\SU(2)$ on the homogeneous polynomials on $\C^{2}$ of degree $k$ and $\eta\in \widehat{\Sp(n)}$. When we restrict $\tau$ to $\Toro^1\times \Sp(n)$ we have
\begin{equation}
(\nu_k\otimes \eta)_{|_{\Toro^1\times \Sp(n)}}=\left( \bigoplus_{i=0}^k\chi_{k-2i} \right)\otimes \eta.
\end{equation}
In consequence
\begin{equation}\label{wxt I}
(\omega\otimes\nu_k\otimes \eta)_{|_{\Toro^1\times \Sp(n)}}=\bigoplus_{s=0}^{\infty}\left[\left( \oplus_{i=0}^k\chi_{s+k-2i} \right)\otimes \eta_{(s)} \right] \otimes\eta.
\end{equation}

\begin{proposition}\label{teo ej i}
The triple $(\SU(2)\times \Sp(n), N(\mathfrak{su}(2),(\C^2)^n),\tau)$ is commutative if and only if:  
\begin{itemize}
\item[$(i)$]  $\tau\in\widehat{\SU(2)}$ or
\item[$(ii)$]  $\tau\in \widehat{\Sp(n)}$
and corresponds to a partition of the form $(a,a,...,a)$, of length at most $n$. %, for some nonnegative integer $a$.
\end{itemize}
\end{proposition}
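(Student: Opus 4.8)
The plan is to apply Corollary~\ref{Klambda con regulares}: since $\mathfrak{g}=\mathfrak{su}(2)$ is non-abelian and there is $X\in\mathfrak{su}(2)$ with $\pi(X)$ invertible (the Remark after the classification), the triple $(\SU(2)\times\Sp(n),N(\mathfrak{su}(2),(\C^2)^n),\tau)$ is commutative if and only if $(\omega\otimes\tau)_{|_{\Toro^1\times\Sp(n)}}$ is multiplicity free, where $\Toro^1$ is the maximal torus of $\SU(2)$. So the whole problem reduces to analyzing the decomposition~(\ref{wxt I}).

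First I would treat case $(i)$: when $\tau=\nu_k\otimes\eta_{(0)}$ is the trivial representation of $\Sp(n)$ tensored with a representation of $\SU(2)$, formula~(\ref{wxt I}) collapses to $\bigoplus_{s=0}^\infty\bigl(\bigoplus_{i=0}^k\chi_{s+k-2i}\bigr)\otimes\eta_{(s)}$. Here I must show this is multiplicity free. The key observation is that the $\Sp(n)$-factor $\eta_{(s)}$ already pins down $s$, and then within a fixed $s$ the characters $\chi_{s+k-2i}$ for $i=0,\dots,k$ are pairwise distinct (they have distinct weights $s+k-2i$). Hence no repetition occurs; this is a short computation. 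Conversely, for case $(ii)$ with $\tau=\nu_0\otimes\eta$ (trivial on $\SU(2)$, arbitrary $\eta\in\widehat{\Sp(n)}$), formula~(\ref{wxt I}) becomes $\bigoplus_{s=0}^\infty\chi_s\otimes(\eta_{(s)}\otimes\eta)$. Since the character $\chi_s$ determines $s$, this is multiplicity free if and only if $\eta_{(s)}\otimes\eta$ is multiplicity free as an $\Sp(n)$-representation for every $s\geq 0$. By Corollary~\ref{coro 2 sp}, this happens exactly when $\eta$ corresponds to a constant partition $(a,a,\dots,a)$ of length at most $n$. This establishes that the triples in $(i)$ and $(ii)$ are commutative and that within the families $\tau=\nu_k\otimes\eta_{(0)}$ and $\tau=\nu_0\otimes\eta$ these are the only ones.

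The main obstacle is the general case $\tau=\nu_k\otimes\eta$ with $k\geq 1$ and $\eta$ nontrivial: I must show such a triple is never commutative, i.e. (\ref{wxt I}) always has a multiplicity. Here the $\SU(2)$-part contributes at least two distinct values $i$ (since $k\geq 1$), and correspondingly for a given final weight $w=s+k-2i$ there can be several pairs $(s,i)$ producing it. Concretely, I would fix the weight $w$ and note that $(s,i)=(s_0,0)$ and $(s_0-2,1)$ (for suitable $s_0$) both contribute $\chi_w$, with $\Sp(n)$-parts $\eta_{(s_0)}\otimes\eta$ and $\eta_{(s_0-2)}\otimes\eta$ respectively. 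It then suffices to exhibit one irreducible $\Sp(n)$-representation occurring in both. Using the universal Pieri rule~(\ref{universal Pieri law}): $\eta_{(s_0-2)}\otimes\eta$ contains $\eta$ itself provided $s_0-2\geq 2$ by Corollary~\ref{coro 1 sp} applied in reverse—more precisely, $\eta$ appears in $\eta\otimes\eta_{(s_0-2)}$ for appropriate $s_0$, and $\eta$ also appears in $\eta\otimes\eta_{(s_0)}$—so for $s_0$ large enough (both at least $2$) the representation $\eta$ itself sits in both summands, giving the desired multiplicity. One must only check the boundary values of $s$ separately and confirm the weight bookkeeping $s_0+k-2\cdot 0=(s_0-2)+k-2\cdot(-1)$ is corrected properly: the honest pairing is $(s,i)=(s_0,1)$ and $(s_0-2,0)$, both yielding weight $s_0+k-2$, with $\Sp(n)$-parts $\eta_{(s_0)}\otimes\eta$ and $\eta_{(s_0-2)}\otimes\eta$. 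Choosing $s_0\geq 4$ so that $\eta$ appears in both (via the Pieri rule, as in Corollary~\ref{coro 1 sp}) completes the argument. Finally, combining the three cases—$\tau$ trivial on $\Sp(n)$, $\tau$ trivial on $\SU(2)$, and $\tau$ nontrivial on both—yields exactly the list $(i)$--$(ii)$, finishing the proof.
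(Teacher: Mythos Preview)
Your overall strategy is exactly the paper's: reduce via Theorem~\ref{torus}/Corollary~\ref{Klambda con regulares} to the decomposition~(\ref{wxt I}), handle $\tau=\nu_k$ and $\tau=\eta$ separately (the latter via Corollary~\ref{coro 2 sp}), and for the mixed case $k\geq 1$, $\eta$ nontrivial, exhibit two pairs $(s,i)$ yielding the same character and sharing a common $\Sp(n)$-constituent.

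The only issue is in the mixed case. You insist on $s_0\geq 4$ and then claim that $\eta$ appears in both $\eta_{(s_0)}\otimes\eta$ and $\eta_{(s_0-2)}\otimes\eta$ ``via the Pieri rule, as in Corollary~\ref{coro 1 sp}''. But Corollary~\ref{coro 1 sp} is stated only for $s=2$, and its conclusion does \emph{not} extend to larger even $s$: by the universal Pieri rule, $\eta$ occurs in $\eta\otimes\eta_{(s)}$ only if one can remove a horizontal $(s/2)$-strip from $\eta$, which fails already for $\eta=\eta_{(1)}$ and $s=4$ (indeed $\eta_{(4)}\otimes\eta_{(1)}=\eta_{(5)}\oplus\eta_{(4,1)}\oplus\eta_{(3)}$ by~(\ref{tensor de simetricas en sp})). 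The fix is simply to take $s_0=2$: then the two pairs are $(s,i)=(0,0)$ and $(2,1)$, both giving the character $\chi_k$, with $\Sp(n)$-parts $\eta_{(0)}\otimes\eta=\eta$ and $\eta_{(2)}\otimes\eta$; now Corollary~\ref{coro 1 sp} applies directly and gives $\eta\subset\eta_{(2)}\otimes\eta$, producing the multiplicity. This is precisely what the paper does.
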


\begin{proof}
Let $\tau=\nu_k\otimes \eta\in\widehat{\SU(2)\times \Sp(n)}$.
\\  \\
From (\ref{wxt I}), if $\tau=\nu_k$, 
the assertion is easily proved since 
\begin{equation*}
(\omega\otimes\nu_k)_{|_{\Toro^1\times \Sp(n)}}=\bigoplus_{s=0}^{\infty}\left( \oplus_{i=0}^k\chi_{s+k-2i} \right)\otimes \eta_{(s)} 
\end{equation*}
is multiplicity free. 
For the case $\tau=\eta$, from (\ref{wxt I}) we arrive at 
\begin{equation*}
(\omega\otimes \eta)_{|_{\Toro^1\times \Sp(n)}}=\bigoplus_{s=0}^{\infty} \chi_{s} \otimes \eta_{(s)}  \otimes\eta.
\end{equation*}
It is multiplicity free if and only if $\eta_{(s)}  \otimes\eta$ is multiplicity free for all $s\in\Z_{\geq 0}$. From Corollary \ref{coro 2 sp} this holds if and only if $\eta$ corresponds to a partition of the form $(a,a,...,a)$ of length at most $n$.  
\\ \\
Now we assume  
$\eta$ and $\nu_k$ are both nontrivial and from Theorem \ref{torus}, we will prove that $(\omega\otimes\nu_k\otimes \eta)_{|_{\Toro^1\times \Sp(n)}}$ is not multiplicity free. 
\\ \\
If we consider $s=0$ in (\ref{wxt I}), we pick  $\chi_k\otimes\eta$. On the other hand, if we consider $s=2$, we pick $\chi_k\otimes\eta_{(2)}\otimes\eta$. 
% \begin{align*}
% & s=0: \ \ (\chi_{-k}\oplus \chi_{-k+2}\oplus ...\oplus \chi_{k-2}\oplus \chi_{k}) \otimes \eta,\\
% & s=2: \ \ (\chi_{-k+2}\oplus \chi_{-k+4}\oplus ...\oplus \chi_{k}\oplus \chi_{k+2})\otimes \eta_{(2)}\otimes \eta.
% \end{align*}
From Corollary \ref{coro 1 sp}, $\eta$ appears in the decomposition into irreducible factors of the tensor product $\eta_{(2)}\otimes \eta$. Therefore $\eta$ appears at least twice in the decomposition of $(\omega\otimes\nu_k\otimes \eta)_{|_{\Toro^1\times \Sp(n)}}$.
\end{proof}

\subsection*{Case (II)}

\quad In this case $\mathfrak{g}=\mathfrak{su}(2)\oplus\mathfrak{su}(2)$ and it acts on $V=(\C^2)^{k_1}\oplus\R^4\oplus(\C^2)^{k_2}$ (with $k_1+k_2 \geq 1$) by
\begin{itemize}
\item[$(i)$] the first copy $\mathfrak{su}(2)$ of $\mathfrak{g}$ acts only on $(\C^2)^{k_1}$ as $\mathfrak{sp}(1)$ acts on $\HH^{k_1}$  in the natural way, component-wise, whereas the second copy $\mathfrak{su}(2)$ of $\mathfrak{g}$ acts analogously only on $(\C^2)^{k_2}$ and
\item[$(ii)$] $\mathfrak{g}=\mathfrak{so}(4)$ acts naturally on $\R^4$ (since $\mathfrak{su}(2)\oplus\mathfrak{su}(2)= \mathfrak{so}(4)$).
\end{itemize}

Thus, the connected unitary group of intertwining operators of this action is $U=\Sp(k_1)\times \Sp(k_2)$.
\\ 

Let $\Toro^1$ be a torus of $\SU(2)$. The metaplectic representation of $\Toro^1\times \Toro^1\times U$ on $\mathcal{P}((\mathbb{C}^2)^{k_1}\oplus\R^4\oplus(\mathbb{C}^2)^{k_2})$ decomposes in the following way:
\begin{itemize}
\item[$(i)$] The action of $\Toro^1\times \Sp(k_1)$ on  $\mathcal{P}((\C^{2})^{k_1})$ decomposes without multiplicity into irreducible representations  as
\begin{equation} \label{meta iii a}
\bigoplus_{r\in\Z_{\geq 0}}\chi_r\otimes\eta_{(r)} ,
\end{equation}
where $\chi_r(\theta)=e^{-ri\theta}$ (for all $\theta\in \Toro^1$) and $\eta_{(r)}$ is the classical representation of $\Sp(k_1)$ on $\mathcal{P}_r((\C^2)^{k_1})$.

The action of $\Toro^1\times \Sp(k_2)$ on  $\mathcal{P}((\C^{2})^{k_2})$ decomposes analogously
\begin{equation}\label{meta iii b}
\bigoplus_{s\in\Z_{\geq 0}}\chi_{s}\otimes\eta_{(s)},
\end{equation}
where here $\eta_{(s)}$ is the classical representation of $\Sp(k_2)$ on $\mathcal{P}_s((\C^2)^{k_2})$.
\item[$(ii)$] The action of $\Toro^1\times \Toro^1=\{\left(\begin{smallmatrix}
e^{i\theta_1} & 0\\
0& e^{i\theta_2}
\end{smallmatrix}\right)| \ \theta_1, \theta_2\in \R \}$ on $\mathcal{P}(\C^2)$ decomposes without multiplicity into the following sum of characters,
\begin{equation}\label{meta iii c}
\bigoplus_{l_1, l_2\in\Z_{\geq 0}}\chi_{(l_1, l_2)}, 
\end{equation}
where  $\chi_{(l_1, l_2)}(\theta_1,\theta_2)=e^{-l_1i\theta_1}e^{-l_2i\theta_1}$ (for all $(\theta_1,\theta_2)\in \Toro^1\times \Toro^1$).
\end{itemize}

Therefore from (\ref{meta iii a}), (\ref{meta iii b}) and (\ref{meta iii c}),
\begin{equation}\label{meta iii}
\omega_{|_{\Toro^1\times \Toro^1\times U}}=\bigoplus_{r,s,l_1, l_2\in\Z_{\geq 0}}\chi_{(r+l_1, s+l_2)}\otimes\eta_{(r)}\otimes\eta_{(s)}.
\end{equation}

Let $\tau=\nu_m\otimes\nu_n\otimes\eta_1\otimes\eta_2$ be an irreducible unitary representation of $K=\SU(2)\times \SU(2)\times U$, where $\nu_m$ and $\nu_n$ are the classical irreducible unitary representations of $\SU(2)$ on $\mathcal{P}_m(\C^2)$ and $\mathcal{P}_n(\C^2)$ respectively, and  $\eta_1$ and $\eta_2$ are arbitrary irreducible unitary representations of $\Sp(k_1)$ and $\Sp(k_2)$ respectively. When we restrict $\tau$ to $\Toro^1\times \Toro^1\times U$ we obtain the following decomposition,
\begin{equation}\label{tau iii}
\left(\bigoplus_{i=0}^m\chi_{m-2i}\right)\otimes
\left(\bigoplus_{j=0}^n\chi_{n-2j}\right)\otimes
\eta_1\otimes\eta_2. 
\end{equation}

Therefore from (\ref{meta iii}) and (\ref{tau iii}),
\begin{equation}\label{omega tau iii}
(\omega\otimes\tau)_{|_{\Toro^1\times \Toro^1\times U}}=\bigoplus_{r,s,l_1, l_2\in\Z_{\geq 0}}\left(\oplus_{i=0}^m\oplus_{j=0}^n\chi_{(r+l_1+m-2i, s+l_2+n-2j)}\right)\otimes\left(\eta_{(r)}\otimes\eta_1\right)\otimes\left(\eta_{(s)}\otimes\eta_2\right).
\end{equation}

\begin{proposition}
The triple $(\SU(2)\times \SU(2)\times \Sp(k_1)\times \Sp(k_2), N(\mathfrak{su}(2)\oplus\mathfrak{su}(2),(\C^2)^{k_1}\oplus\R^4\oplus(\C^2)^{k_2}),\tau)$ is commutative if and only if $\tau$ is the trivial representation.
\end{proposition}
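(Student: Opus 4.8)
The strategy is to apply Theorem~\ref{torus}, which reduces commutativity of the triple to checking that $(\omega\otimes\tau)_{|_{\Toro^1\times\Toro^1\times U}}$ is multiplicity free, and to use the explicit decomposition \eqref{omega tau iii}. The "if" direction is immediate: when $\tau$ is trivial we have $m=n=0$ and $\eta_1,\eta_2$ trivial, so \eqref{omega tau iii} collapses to $\bigoplus_{r,s,l_1,l_2}\chi_{(r+l_1,s+l_2)}\otimes\eta_{(r)}\otimes\eta_{(s)}$, and the only source of a potential repeat would be two pairs $(r,l_1)\neq(r',l_1')$ with $r+l_1=r'+l_1'$; but then $\eta_{(r)}\neq\eta_{(r')}$ (these are distinct $\Sp(k_1)$-representations), so the summands are inequivalent. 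Hence the triple is commutative. The bulk of the work is the "only if" direction: assuming $\tau=\nu_m\otimes\nu_n\otimes\eta_1\otimes\eta_2$ is nontrivial, I must exhibit a multiplicity in \eqref{omega tau iii}.

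First I would dispose of the case $k_1+k_2\geq 1$ with, say, $k_1\geq 1$. The plan is a case split according to which tensor factors of $\tau$ are nontrivial. If $m\geq 1$: fix $s=l_2=0$ and $j=0$, and look only at the first "slot" of the characters. For fixed target weight the pair $(\chi_{(r+l_1+m-2i,\,n)},\eta_{(r)})$ repeats whenever we can realize the same first-index $r+l_1+m-2i$ with two different values of $r$ (so that, to get equivalent total summands, we also need $\eta_{(r)}\otimes\eta_1\cong\eta_{(r')}\otimes\eta_1$ — which fails for $r\neq r'$ — so instead we must keep $r$ fixed and vary $(l_1,i)$). So with $r$ fixed, the summand $\eta_{(r)}\otimes\eta_1$ (itself possibly reducible, but that is irrelevant — any fixed irreducible constituent will do) gets attached to the character $\chi_{(r+l_1+m-2i,\,n-2j)}$, and since $m\geq 1$ we can hit the same weight via $(l_1,i)$ and $(l_1+m-2i-(m-2i'),i')$ — concretely $(l_1,i)=(1,0)$ versus $(l_1',i')=(m,?)$ requires care; cleaner is to note $m-2i$ ranges over a set of size $m+1\geq 2$ while $l_1\in\Z_{\geq 0}$, so $r+l_1+m-2i$ takes each sufficiently large value in at least two ways with $r,j$ fixed. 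This produces the multiplicity. The case $n\geq 1$ is symmetric.

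The remaining case is $m=n=0$ but $(\eta_1,\eta_2)$ nontrivial, say $\eta_1$ nontrivial. Here \eqref{omega tau iii} reads $\bigoplus_{r,s,l_1,l_2}\chi_{(r+l_1,s+l_2)}\otimes(\eta_{(r)}\otimes\eta_1)\otimes(\eta_{(s)}\otimes\eta_2)$. Fix $s=l_2=0$ (so the second $\Sp$-factor contributes $\eta_2$ and the second character slot is $0$) and consider the weight $(N,0)$ for $N$ large. The constituents contributing are, for each $r$ with $0\leq r\leq N$ (taking $l_1=N-r$), the irreducible factors of $\eta_{(r)}\otimes\eta_1$, each tagged with the same character $\chi_{(N,0)}$. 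So it suffices to find $r\neq r'$ and an irreducible $\Sp(k_1)$-representation occurring in both $\eta_{(r)}\otimes\eta_1$ and $\eta_{(r')}\otimes\eta_1$: then that representation appears with multiplicity $\geq 2$ in the $(N,0)$-isotypic piece, for $N$ chosen larger than $r,r'$. By Corollary~\ref{coro 1 sp}, $\eta_1$ occurs in $\eta_{(0)}\otimes\eta_1=\eta_1$ trivially (take $r'=0$) and also in $\eta_{(2)}\otimes\eta_1$ (take $r=2$); since $\eta_1$ is nontrivial these are genuinely the situation we want, and choosing $N\geq 2$ gives the multiplicity. The case $\eta_2$ nontrivial is symmetric, using the second coordinate. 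This exhausts all nontrivial $\tau$, so commutativity forces $\tau$ trivial.

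\textbf{Main obstacle.} The only delicate point is bookkeeping: in \eqref{omega tau iii} each character $\chi_{(\cdot,\cdot)}$ is attached to a tensor product of $\Sp$-representations that is itself reducible, so one must be careful to compare \emph{irreducible} constituents and to ensure that the two ways of producing a given weight do not get "absorbed" by an accidental inequivalence of the attached representations. The safeguard is always to keep the summation indices $r$ (and $s$) that control the $\Sp$-representations \emph{fixed}, and to move only the torus indices $l_1,l_2$ and the restriction indices $i,j$, so that literally the same $\Sp$-constituent reappears; and, in the $m=n=0$ subcase, to invoke Corollary~\ref{coro 1 sp} so that a \emph{single} named constituent ($\eta_1$ itself) is seen in two different $\eta_{(r)}\otimes\eta_1$. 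Once this is set up the verification is routine arithmetic on weights.
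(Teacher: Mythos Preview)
Your proposal is correct and follows essentially the same route as the paper: both rely on Theorem~\ref{torus} and the decomposition~\eqref{omega tau iii}, and in the nontrivial case both produce multiplicity by (a) fixing $r$ and varying $(l_1,i)$ when $m\geq 1$ (the paper's concrete choice is $(l_1,i)=(0,0)$ versus $(2,1)$), and (b) comparing $(r,l_1)=(0,2)$ with $(2,0)$ together with Corollary~\ref{coro 1 sp} when $\eta_1$ is nontrivial. Your write-up is a bit more discursive and also spells out the ``if'' direction explicitly, but the substance is the same.
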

\begin{proof}
Let $\tau=\nu_m\otimes\nu_n\otimes\eta_1\otimes\eta_2$ as above. If $\nu_m$ is nontrivial, i.e. $m\geq 1$, considering in (\ref{omega tau iii}) $l_1=0$, $i=0$ and then $l_1=2$, $i=1$, we obtain multiplicity in the decomposition into irreducible representations of $(\omega\otimes\tau)_{|_{\Toro^1\times \Toro^1\times U}}$. Analogously, if $\nu_n$ is nontrivial, i.e. $n\geq 1$, we have multiplicity in (\ref{omega tau iii}).
\\ \\
If $\eta_1\in\widehat{\Sp(k_1)}$ is nontrivial, by the same argument given in Proposition \ref{teo ej i}  (that uses Corollary \ref{coro 1 sp}), considering on the one hand $r=0$, $l_1=2$ and on the other hand $r=2$, $l_1=0$ on (\ref{omega tau iii}), we have multiplicity in $(\omega\otimes\tau)_{|_{\Toro^1\times \Toro^1\times U}}$. The same holds for a nontrivial $\eta_2\in\widehat{\Sp(k_2)}$.
\end{proof}

%%%%%%%%%%%%%%%%%%%%%%%%%%%(IV)%%%%%%%%%%%%%%%%%%%%%%%%%%%%
%%%%%%%%%%%%%%%%%%%%%%%%%%%%%%%%%%%%%%%%%%%%%%%%%%%%%%%%%%%

\subsection*{Case (III)}

\quad  This case consists of $\mathfrak{g}=\mathfrak{sp}(2)$ and $\mathfrak{n}=\mathfrak{sp}(2)\oplus (\HH^2)^n$ (with $n\geq 1$), where we are identifying $\C^4$ with $\HH^2$ (as real vector spaces). The real representation $\pi$ of $\mathfrak{sp}(2)$  on $(\HH^2)^n$ is similar  to the one described in the case (I):
\begin{equation}\label{pi ej iv}
\pi(z)(v):=(zv_1,...,zv_n) \quad (v=(v_1,...,v_n)\in \HH^2,  \ z\in \mathfrak{sp}(2)).
\end{equation}

As the group of unitary intertwining operators of $(\pi,(\HH^2)^n)$ is $\Sp(n)$ (acting on the right side as a $\C$-linear action), the group $K$ is $\Sp(2)\times \Sp(n)$. 
\\ 

Let $\Toro^2:=\{ \left( \begin{smallmatrix}
e^{i\theta_1} &  0  \\
0 & e^{i\theta_2}  
\end{smallmatrix} \right)  | \ \theta_1,\theta_2\in\R \}$ be a maximal torus on $\Sp(2)$. %It acts diagonally on $\C^{4}$. 
\\ 

Since the natural action of $\mathfrak{sp}(2)$ on $\HH^2$ is irreducible, Schur's lemma implies that  each intertwining operator $A\in \Sp(n)$ of the action of $\mathfrak{sp}(2)$ on $\mathcal{P}((\HH^{2})^n)$, $A:(\HH^2)^n\rightarrow (\HH^2)^n$, has the following  matrix representation
\begin{equation*}
[A]=\begin{pmatrix} 
a_{11}I & a_{12}I & ... & a_{1n}I\\ 
a_{21}I & a_{22}I & ... & a_{2n}I\\
... & ... & ... & ...\\
... & ... & ... & ...\\
a_{n1}I & a_{n2}I & ... & a_{nn}I
\end{pmatrix}, \textrm{ where } I=\begin{pmatrix} 
1 & 0\\ 
0 & 1
\end{pmatrix}, \text{ and } a_{i,j}\in \HH.
\end{equation*}
We can deduce that the action of $\Sp(n)$ on $\mathcal{P}((\C^4)^n)$  splits as $\mathcal{P}((\C^2)^n)\otimes \mathcal{P}((\C^2)^n)$ and also it can be written as $\bigoplus_{r,s\in \Z_{\geq 0}}\mathcal{P}_r((\C^2)^n)\otimes \mathcal{P}_s((\C^2)^n)$. 
\\ 

Therefore the metaplectic representation $\omega$ of  $\Toro^2\times \Sp(n)$ on $\mathcal{P}((\C^4)^{n})$ decomposes into
\begin{equation*}
\omega_{|_{\Toro^2\times \Sp(n)}}=
\bigoplus_{r,s\in\Z_{\geq 0}}\chi_{(r,s)}\otimes
\eta_{(r)}\otimes\eta_{(s)},
\end{equation*}
where $\chi_{(r,s)}(\theta_1,\theta_2)=e^{-ir\theta_1}e^{-is\theta_2}$ and where $\eta_{(r)}$ and $\eta_{(s)}$ denote the classical representations of $\Sp(n)$ on the homogeneous polynomials on $(\C^2)^n$  of degree $r$ and $s$ respectively. Moreover, from (\ref{tensor de simetricas en sp}), we obtain the multiplicity free decomposition
\begin{equation}\label{meta ej iv}
\omega=\bigoplus_{r,s\in\Z_{\geq 0}}\chi_{(r,s)}\otimes\left(\bigoplus_{j=0}^s\bigoplus_{i=0}^j\eta_{(r+s-j-i,j-i)}\right).
\end{equation}
%By the generalization of the Clebsch-Gordan formula for $\Sp(n)$ given in \cite{Koike} page 510 we know that:
%\begin{equation}\label{gordan 2}
%\eta_{(r)}\otimes\eta_{(s)}=\bigoplus_{j=0}^{s}\bigoplus_{i=0}^{j}\eta_{(r+s-j-i,j-i)}.
%\end{equation}

\begin{proposition}
The triple $(\Sp(2)\times \Sp(n), N(\mathfrak{sp}(2),(\HH^2)^n), \tau)$  is  commutative if and only if $\tau$ is the trivial representation. 
\end{proposition}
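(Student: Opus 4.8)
The plan is to invoke Theorem \ref{torus}. Since $\mathfrak{g}=\mathfrak{sp}(2)$ is semisimple and $N=N(\mathfrak{sp}(2),(\HH^2)^n)$ has a square integrable representation, the triple $(K,N,\tau)$ is commutative if and only if $(\omega\otimes\tau)_{|_{\Toro^2\times\Sp(n)}}$ is multiplicity free. Write an arbitrary $\tau\in\widehat{K}$ as $\tau=\mu\otimes\eta$ with $\mu\in\widehat{\Sp(2)}$, $\eta\in\widehat{\Sp(n)}$, and write $\mu_{|_{\Toro^2}}=\bigoplus_{w}\chi_{w}$ as the sum of its weight characters. Using that the $\Sp(n)$-part of the $(r,s)$-summand of $\omega$ in (\ref{meta ej iv}) is exactly $\eta_{(r)}\otimes\eta_{(s)}$, one gets
\begin{equation*}
(\omega\otimes\tau)_{|_{\Toro^2\times\Sp(n)}}=\bigoplus_{r,s\in\Z_{\geq 0}}\bigoplus_{w}\chi_{(r,s)+w}\otimes(\eta_{(r)}\otimes\eta_{(s)}\otimes\eta).
\end{equation*}
For $\tau=\mathbf 1$ this is precisely the decomposition (\ref{meta ej iv}), which is multiplicity free, so that direction is immediate; for the converse one assumes $\tau\neq\mathbf 1$ and must produce a repeated irreducible summand, and it is natural to split according to whether $\eta$ is trivial.

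Suppose $\eta\neq\mathbf 1$. I would look at the summand with $(r,s)=(1,1)$, whose $\Sp(n)$-component is $\eta_{(1)}\otimes\eta_{(1)}=\eta_{(2)}\oplus\eta_{(1,1)}\oplus\mathbf 1$ (the term $\eta_{(1,1)}$ appearing only for $n\geq 2$). Fix any weight $w$ of $\mu$. Tensoring by $\eta$, the representation $\eta$ of $\Sp(n)$ occurs in this block attached to the fixed character $\chi_{(1,1)+w}$ at least twice: once coming from $\mathbf 1\otimes\eta$, and once inside $\eta_{(2)}\otimes\eta$ by Corollary \ref{coro 1 sp}. Thus $\chi_{(1,1)+w}\otimes\eta$ has multiplicity at least two, and the triple is not commutative. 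Note that this argument does not use anything about $\mu$.

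Suppose instead $\eta=\mathbf 1$, so $\mu\neq\mathbf 1$. Here the rigid correlation between the $\Toro^2$-character $\chi_{(r,s)}$ and the $\Sp(n)$-type $\eta_{(r)}\otimes\eta_{(s)}$ in $\omega$ must be broken by the Weyl symmetry of the weights of $\mu$. A nontrivial irreducible representation $\mu$ of $\Sp(2)$ always carries a weight with unequal coordinates: it has some nonzero weight $(c_1,c_2)$, and if $c_1=c_2$ then the sign change $(x,y)\mapsto(x,-y)$ in the Weyl group produces the weight $(c_1,-c_1)$. Choose a weight $(a,b)$ of $\mu$ with $a>b$; then $(b,a)$ is also a weight. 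I would then combine the two distinct summands of $\omega$ indexed by $(r,s)=(a-b,0)$ and $(r,s)=(0,a-b)$, whose $\Sp(n)$-components are both $\eta_{(a-b)}$, with the weights $(b,a)$ and $(a,b)$ of $\mu$ respectively: each contributes one copy of $\chi_{(a,a)}\otimes\eta_{(a-b)}$, so this irreducible of $\Toro^2\times\Sp(n)$ occurs with multiplicity at least two, and again the triple is not commutative.

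The main obstacle is this last case. In contrast with case (II), where the direct summand $\R^4$ of $V$ supplies a free polynomial direction and hence a translation that forces multiplicities for every nontrivial tensor factor, here there is no analogous slack: the only mechanism available is the interplay between the symmetry $(r,s)\mapsto(s,r)$ of the metaplectic decomposition (\ref{meta ej iv}) and the Weyl symmetry of $\mu$. Verifying carefully that every nontrivial irreducible of $\Sp(2)$ has an off-diagonal weight, and that the two occurrences of $\chi_{(a,a)}\otimes\eta_{(a-b)}$ just constructed come from genuinely different summands, is the delicate point. Together with the case $\eta\neq\mathbf 1$, this shows that $(\omega\otimes\tau)_{|_{\Toro^2\times\Sp(n)}}$ is multiplicity free exactly when $\tau$ is trivial, which by Theorem \ref{torus} yields the proposition.
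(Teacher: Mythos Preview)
Your proof is correct. The treatment of the case $\eta\neq\mathbf 1$ is essentially identical to the paper's: both isolate the block $(r,s)=(1,1)$, use that $\eta_{(1)}\otimes\eta_{(1)}$ contains both the trivial representation and $\eta_{(2)}$, and then invoke Corollary~\ref{coro 1 sp} to see $\eta$ twice under a fixed $\Toro^2$-character.

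For the case $\eta=\mathbf 1$, $\mu\neq\mathbf 1$, your route is genuinely different and somewhat more transparent. The paper parametrizes the weights of $\mu$ (there called $\kappa$) as $(\kappa_1-l,\kappa_2+l-2m)$ via subtraction of simple roots, pairs the highest weight with a second weight, chooses $(r,s)=(0,l)$ and $(r',s')=(l,2m)$, and then has to appeal to the explicit tensor formula (\ref{tensor de simetricas en sp}) to check that $\eta_{(l)}$ occurs in $\eta_{(l)}\otimes\eta_{(2m)}$. You instead exploit two symmetries simultaneously: the Weyl group swap $(a,b)\leftrightarrow(b,a)$ on the weights of $\mu$ and the swap $(r,s)\leftrightarrow(s,r)$ on the metaplectic side. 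Since one of the tensor factors is $\eta_{(0)}$ in both of your choices, no tensor product computation is needed at all. The price you pay is the small extra verification that every nontrivial $\mu\in\widehat{\Sp(2)}$ has an off-diagonal weight, which you handle cleanly via the sign-change element of the Weyl group. Both arguments are valid; yours avoids formula~(\ref{tensor de simetricas en sp}) entirely in this case.
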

\begin{proof}
Let $\tau\in\widehat{\Sp(2)\times \Sp(n)}$. Then $\tau=\kappa\otimes \eta$, where $\kappa \in \widehat{\Sp(2)}$ and $\eta\in \widehat{\Sp(n)}$.
\\ \\
First, let $\eta\in\widehat{\Sp(n)}$ be nontrivial. Fix $r=1$ and $s=1$ in (\ref{meta ej iv}). On the one hand take in (\ref{meta ej iv}) $i=j=0$ and on the other hand, $i=j=1$ . As a result,  $\chi_{(1,1)}\otimes(\kappa_{|_{\Toro^2}})\otimes\eta$ and $\chi_{(1,1)}\otimes(\kappa_{|_{\Toro^2}})\otimes(\eta_{(2)}\otimes\eta)$ appear in $(\omega\otimes\tau)_{|_{\Toro^2\times \Sp(n)}}$. From Corollary \ref{coro 1 sp} we know that $\eta$ appears in the decomposition of $\eta_{(2)}\otimes\eta$. As we expected, $(\omega\otimes\tau)_{|_{\Toro^2\times \Sp(n)}}$ is not multiplicity free. This leads us to assume the component $\eta$ to be trivial.
\\ \\
Now, let $\kappa\in\widehat{\Sp(2)}$ be nontrivial. %It is associated to its highest weight $a_1 L_1+a_2 (L_1+L_2)$ and therefore 
We can associate to $\kappa$  the partition $(\kappa_1,\kappa_2)$. %, where $\kappa_1=a_1+a_2$ and $\kappa_2=a_2$. 
From the theorem of the highest weight, every weight of $\kappa$ is given by the highest weight minus nonnegative sums of the simple roots $L_1-L_2$ and $2L_2$. 
Let $\chi_{(\kappa_1,\kappa_2)}$ and $\chi_{(\kappa_1-l, \kappa_2+l-2m)}$ be two different characters in the decomposition of $\kappa_{|_{\Toro^2}}$ for some nonnegative integers $l$ and $m$. In the decomposition of $\omega_{|_{\Toro^2\times \Sp(n)}}\otimes\kappa_{|_{\Toro^2}}$ appear (in particular) the following sums of irreducible representations
\begin{equation*}
\left(\bigoplus_{r,s \in \Z_{\geq 0}}\chi_{(\kappa_1+r,\kappa_2+s)}\otimes\eta_{(r)}\otimes\eta_{(s)}\right)\oplus \left(\bigoplus_{r',s' \in \Z_{\geq 0}}\chi_{(\kappa_1-l+r',\kappa_2+l-2m+s')}\otimes\eta_{(r')}\otimes\eta_{(s')}\right).
\end{equation*}
We choose $r'=l$, $s'=2m$, $r=0$ and $s=l$.  They satisfy 
\begin{equation}\label{eleccionparam}
     \left\{
	       \begin{array}{ll}
		 r=-l+r' \\
		 s=l-2m+s'
	       \end{array}
	     \right.
\end{equation}
The representation $\eta_{(l)}$  appears in $\eta_{(r)}\otimes\eta_{(s)}$ and in $\eta_{(r')}\otimes\eta_{(s')}$. This holds since from (\ref{tensor de simetricas en sp}),
\begin{align*}
\eta_{(r)}\otimes\eta_{(s)}&=\eta_{(0)}\otimes\eta_{(l)}=\eta_{(l)},\\
\eta_{(r')}\otimes\eta_{(s')}&=\eta_{(l)}\otimes\eta_{(2m)}=\oplus_{j=0}^{max\{l,2m\}}\oplus_{i=0}^j\eta_{(l+2m-j-i,j-i)},
\end{align*}
and taking $j=i=m$ we see that $\eta_{(l)}$ appears in $\eta_{(r')}\otimes\eta_{(s')}$.
% If $2m-n> 0$, we consider $r'=2m$, $s'=2m$, $r=2m-n$ and $s=n$. Therefore they satisfy (\ref{eleccionparam}) and the representation $\eta_{2m}$ appears on the decompositions of $\eta_{(r)}\otimes\eta_{(s)}=\eta_{(2m-n)}\otimes\eta_{(n)}$ and  $\eta_{(r')}\otimes\eta_{(s')}=\eta_{(2m)}\otimes\eta_{(2m)}$.
\\ \\
In conclusion, when $\tau$ is not the trivial representation, $(\omega\otimes\tau)_{|_{\Toro^2\times \Sp(n)}}$ is not multiplicity free and by Theorem \ref{torus},  the triple is not commutative.
\end{proof}

%%%%%%%%%%%%%%%%%%%%%%%%VI%%%%%%%%%%%%%%%%%%%%%%%%%%
%%%%%%%%%%%%%%%%%%%%%%%%%%%%%%%%%%%%%%%%%%%%%%%%%%%

\subsection*{Case (IV): free two-step nilpotent Lie groups}

\quad We will study the pairs $(\SO(2n), N(\mathfrak{so}(2n),\mathbb{R}^{2n}))$, for  $n\geq 2$. Here the representation $\pi$ is the natural action of $\mathfrak{so}(2n)$ on $\mathbb{R}^{2n}$ and therefore 
$K=\SO(2n)$.
\\ 

We fix a maximal torus on $\SO(2n)$,
\begin{equation*}
\Toro^{n}:=\left\lbrace \begin{pmatrix}  
  \cos (\theta_1)& -\sin(\theta_1)  & &0 & 0&\\
  \sin(\theta_1) & \cos(\theta_1) &   &0& 0& \\
                                & & \ddots&  &  & \\
           0& 0&   &\cos (\theta_n)& -\sin(\theta_n)\\
    0 & 0 &   & \sin(\theta_n) & \cos(\theta_n)
\end{pmatrix} | \ \theta_1, ..., \theta_n \in \R \right\rbrace .
\end{equation*}

The metaplectic representation of $\Toro^n$ on $\mathcal{P}(\C^n)$ is the usual one and it decomposes without multiplicity into a direct sum of characters
\begin{equation}
\omega_{|_{\Toro^n}}=\bigoplus_{(k_1,...,k_n)\in \mathbb{Z}_{\geq 0}}\chi_{(k_1,...,k_n)},
\end{equation}
where $(\chi_{(k_1,...,k_n)}(\theta_1,...,\theta_n))(z_1^{k_1}...z_n^{k_n})=e^{-ik_1\theta_1}z_1^{k_1}...e^{-ik_n\theta_n}z_n^{k_n}$.
\\ 

Let $(\tau,W_\tau)\in\widehat{\SO(2n)}$ be a nontrivial representation.
We must analyze the decomposition into irreducible factors of $\tau$ restricted to the torus $\Toro^n$ and then
verify whether $\omega_{|_{\Toro^n}}\otimes\tau_{|_{\Toro^n}}$ is multiplicity free or not:
\begin{itemize}
\item[(i)] If $\tau_{|_{\Toro^n}}$ is not multiplicity free, then $\omega_{|_{\Toro^n}}\otimes\tau_{|_{\Toro^n}}$ is not multiplicity free either.
\item[(ii)] Assume that $\tau_{|_{\Toro^n}}$ is multiplicity free. Since $\tau$ is not the trivial representation, $\tau_{|_{\Toro^n}}$ decomposes into a sum of characters (as many as its dimension).
Let $\chi_{(r_1,...,r_n)}$ and $\chi_{(s_1,...,s_n)}$ be two different characters on $\tau_{|_{\Toro^n}}$ determined by the $n$-tuples of integers $(r_1,...,r_n)$ and $(s_1,...,s_n)$. Let $a:=\text{max}\{s_i, r_i, 0: 1\leq i\leq n\}$ and for each $1\leq i\leq n$, consider nonnegative integers $k_i:=a-r_i$ and $l_i:=a-s_i$. Since $\chi_{(k_1,...k_n)}$ and $\chi_{(l_1,...l_n)}$ appear in  $\omega_{|_{\Toro^n}}$, then   $\chi_{(a,...,a)}$ appears in the decomposition of $\omega_{|_{\Toro^n}}\otimes \chi_{(r_1,...,r_n)}$ and in the decomposition of $\omega_{|_{\Toro^n}}\otimes \chi_{(s_1,...,s_n)}$, so $\chi_{(a,...,a)}$ appears at least twice in $\omega_{|_{\Toro^n}}\otimes\tau_{|_{\Toro^n}}$.
\end{itemize}

In conclusion, by Theorem \ref{torus}, we have proved  the following result.
\begin{proposition}
The triple $(\SO(2n), N(\mathfrak{so}(2n), \R^{2n}), \tau)$ is  commutative  if and only if  $\tau$ is the trivial representation.
\end{proposition}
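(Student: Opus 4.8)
The plan is to invoke Theorem \ref{torus}: since $\mathfrak{g}=\mathfrak{so}(2n)$ is semisimple and (by the Remark) $N(\mathfrak{so}(2n),\R^{2n})$ has a square integrable representation, the triple $(\SO(2n),N(\mathfrak{so}(2n),\R^{2n}),\tau)$ is commutative if and only if $(\omega\otimes\tau)_{|_{\Toro^n}}$ is multiplicity free (here $U$ is trivial because the natural representation of $\mathfrak{so}(2n)$ on $\R^{2n}$ has only scalar orthogonal intertwiners, so $K=\SO(2n)=G$). The direction that needs checking is: if $\tau$ is nontrivial, then $(\omega\otimes\tau)_{|_{\Toro^n}}$ is \emph{not} multiplicity free.

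First I would record the weight decomposition $\omega_{|_{\Toro^n}}=\bigoplus_{(k_1,\dots,k_n)\in\Z_{\geq 0}^n}\chi_{(k_1,\dots,k_n)}$, which is multiplicity free and, crucially, has each character of \emph{nonnegative} exponent vector occurring exactly once. Then I split into two cases according to whether $\tau_{|_{\Toro^n}}$ is multiplicity free. If it is not, then already $\omega_{|_{\Toro^n}}\otimes\tau_{|_{\Toro^n}}$ fails to be multiplicity free, since the trivial character $\chi_{(0,\dots,0)}$ occurs once in $\omega_{|_{\Toro^n}}$ and tensoring with a repeated weight of $\tau$ reproduces that repetition. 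If $\tau_{|_{\Toro^n}}$ is multiplicity free, then since $\tau$ is nontrivial it has dimension $\geq 2$, hence at least two distinct weights $\chi_{(r_1,\dots,r_n)}$ and $\chi_{(s_1,\dots,s_n)}$. Set $a:=\max\{0,r_i,s_i:1\leq i\leq n\}$ and $k_i:=a-r_i\geq 0$, $l_i:=a-s_i\geq 0$. Then $\chi_{(k_1,\dots,k_n)}$ and $\chi_{(l_1,\dots,l_n)}$ both appear in $\omega_{|_{\Toro^n}}$, so $\chi_{(a,\dots,a)}=\chi_{(k_1,\dots,k_n)}\chi_{(r_1,\dots,r_n)}=\chi_{(l_1,\dots,l_n)}\chi_{(s_1,\dots,s_n)}$ appears (at least) twice in $(\omega\otimes\tau)_{|_{\Toro^n}}$. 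This is exactly the argument outlined in steps (i)--(ii) preceding the proposition. The conclusion for both directions then follows from Theorem \ref{torus}, noting that the trivial representation trivially gives a commutative triple because $(\SO(2n),N(\mathfrak{so}(2n),\R^{2n}))$ is a Gelfand pair by Lauret's classification (case (IV)).

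The only point requiring a small amount of care — the ``main obstacle'' such as it is — is verifying that for a nontrivial irreducible $\tau$ the restriction $\tau_{|_{\Toro^n}}$ genuinely contains two \emph{distinct} weights when it happens to be multiplicity free; this is immediate since a one-dimensional representation of $\SO(2n)$ for $n\geq 2$ is trivial (as $\SO(2n)$ is its own commutator subgroup), so $\dim W_\tau\geq 2$ forces at least two weight lines and, under the multiplicity-free assumption, these carry distinct characters. Everything else is the bookkeeping already displayed in the excerpt, so no further computation is needed.
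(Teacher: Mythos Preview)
Your proposal is correct and follows essentially the same approach as the paper: the paper's argument is precisely the two-case split (i)--(ii) displayed just before the proposition, using the multiplicity-free decomposition of $\omega_{|_{\Toro^n}}$ into the characters $\chi_{(k_1,\dots,k_n)}$ and, in the nontrivial case, shifting two distinct weights of $\tau$ to the common character $\chi_{(a,\dots,a)}$ via $a=\max\{0,r_i,s_i\}$. Your added remark that $\SO(2n)$ is perfect for $n\geq 2$ (so a nontrivial $\tau$ has $\dim W_\tau\geq 2$) makes explicit a point the paper leaves implicit, but otherwise the arguments coincide.
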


%En primer lugar supongamos que aparecen en $\tau_{|_{\Toro^n}}$ los caracteres $\chi_{(k_1,...k_n)}$ y $\chi_{(l_1,...l_n)}$, con $(k_1,...k_n)$ y $(l_1,...l_n)$ dos $n$-uplas de enteros no negativos. Entonces, como tanto $\chi_{(k_1,...k_n)}$ como $\chi_{(l_1,...l_n)}$ aparecen en $\omega_{|_{\Toro^n}}$, se tiene que $\chi_{(k_1+l_1,...k_n+l_n)}$ aparece al menos 2 veces en $\omega_{|_{\Toro^n}}\otimes\tau_{|_{\Toro^n}}$.

%%%%%%%%%%%%%%%%%%%%%%%%V y IX%%%%%%%%%%%%%%%%%%%%%%%%%%
%%%%%%%%%%%%%%%%%%%%%%%%%%%%%%%%%%%%%%%%%%%%%%%%%%%

\subsection*{Cases (V) and (VI)}

\quad  Let $n\geq 3$.
In case (V), $\mathfrak{n}=\mathfrak{su}(n)\oplus \C^n$, the representation $\pi$ of $\mathfrak{su}(n)$ on $\C^n$ is the canonical and then  $K=\SU(n)\times \Sphere^1$. 
In case (VI),  $\mathfrak{g}=\mathfrak{u}(n)$ is not semisimple and it is acting on $\C^n$ in the natural way. The subgroup of automorphisms $K$ is the same as in case (V), i.e., $K=\SU(n)\times \Sphere^1$.
\\ 

Every $\tau\in\widehat{K}$ is given by $\tau=\nu\otimes\chi_r$, for some $\nu\in\widehat{\SU(n)}$ and  some  character $\chi_r$ of $\Sphere^1$ with $r\in\mathbb{Z}$.

\begin{proposition}
The triple $(\SU(n)\times \Sphere^1, N(\mathfrak{su}(n),\mathbb{C}^n), \tau)$  is  commutative if and only if $\tau\in\widehat{\Sphere^1}$.
\end{proposition}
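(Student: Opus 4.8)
The plan is to invoke Theorem~\ref{torus}: since $\mathfrak{g}=\mathfrak{su}(n)$ is semisimple, the triple is commutative if and only if $(\omega\otimes\tau)_{|_{T\times\Sphere^1}}$ is multiplicity free, where $T$ is a maximal torus of $\SU(n)$. Here $\dim_\R V=2n$, so $\omega$ acts on $\mathcal{P}(\C^n)$ by $(\omega(k)p)(z)=p(k^{-1}z)$, and by the discussion in Section~\ref{section torus} we may realize $K_\lambda$ as $T\times\Sphere^1$ for a regular $X_\lambda\in\mathfrak{su}(n)$. The first step is to describe $\omega_{|_{T\times\Sphere^1}}=\bigoplus_{k\in\Z_{\geq0}^n}\chi_k$, a sum of characters indexed by the monomials $z^k$: for suitable signs $\epsilon_j\in\{\pm1\}$ (the signs of the eigenvalues of $\pi(X_\lambda)$, coming from the compatible complex structure), $\chi_k$ restricted to $T$ is the weight $-(\epsilon_1k_1,\dots,\epsilon_nk_n)$ read modulo the vector $(1,\dots,1)$, and $\chi_k$ restricted to $\Sphere^1$ is $\phi\mapsto e^{-i(\epsilon_1k_1+\dots+\epsilon_nk_n)\phi}$. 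I then claim $\omega_{|_{T\times\Sphere^1}}$ is already multiplicity free: writing $m_j:=\epsilon_jk_j$, the $T$-part of $\chi_k$ recovers $(m_j)_j$ modulo $(1,\dots,1)$ and the $\Sphere^1$-part recovers $\sum_jm_j$, and two integer vectors with the same class modulo $(1,\dots,1)$ and the same coordinate sum are equal, so $(m_j)_j$, hence $k$, is determined by $\chi_k$. The main obstacle is a bookkeeping one: the metaplectic model uses the complex structure attached to $X_\lambda$ rather than the standard one on $\C^n$, which is what forces the signs $\epsilon_j$ into the formulas; one has to check that all the congruence-and-sum arguments below are insensitive to these signs, which they are because $\epsilon_j^2=1$.

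Granting this description, sufficiency is immediate. If $\tau=\chi_r\in\widehat{\Sphere^1}$ is a character of $\Sphere^1$ (so the $\SU(n)$-part of $\tau$ is trivial), then $(\omega\otimes\chi_r)_{|_{T\times\Sphere^1}}$ is obtained from $\omega_{|_{T\times\Sphere^1}}$ by adding $r$ to every $\Sphere^1$-degree, which is a bijection on the set of characters that occur; hence it is multiplicity free and the triple is commutative by Theorem~\ref{torus}.

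For necessity, write $\tau=\nu\otimes\chi_r$ with $\nu\in\widehat{\SU(n)}$ nontrivial. As $\SU(n)$ is semisimple, $\dim\nu\geq2$, so $\nu_{|_T}$ has two distinct weight basis vectors $v_\mu$ and $v_{\mu'}$ (with $\mu=\mu'$ allowed). Since $\mu$ and $\mu'$ are weights of an irreducible representation of $\SU(n)$, their difference lies in the root lattice of $\mathfrak{su}(n)$ and hence has a representative $\delta\in\Z^n$ with $\sum_j\delta_j=0$. Pick $M\in\N$ with $\epsilon_j\delta_j+M\geq0$ for all $j$ and set $k:=(\epsilon_j\delta_j+M)_j$ and $k':=(M,\dots,M)$, so that $k,k'\in\Z_{\geq0}^n$, $\epsilon_j(k_j-k'_j)=\delta_j$, and $\sum_j\epsilon_j(k_j-k'_j)=\sum_j\delta_j=0$. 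Then in $(\omega\otimes\nu\otimes\chi_r)_{|_{T\times\Sphere^1}}$ the two basis vectors $z^{k}\otimes v_\mu$ and $z^{k'}\otimes v_{\mu'}$ carry the same character of $T\times\Sphere^1$: the $T$-parts coincide because $-(\epsilon_jk_j)_j+\mu$ and $-(\epsilon_jk'_j)_j+\mu'$ differ by $-(\epsilon_j(k_j-k'_j))_j+(\mu-\mu')=-\delta+\delta=0$ modulo $(1,\dots,1)$ --- here I use that $\nu$ contributes its weight on $T$ --- and the $\Sphere^1$-parts coincide because $\nu$ and $\chi_r$ contribute a fixed character there while $\sum_j\epsilon_jk_j=\sum_j\epsilon_jk'_j$. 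Since the two vectors are distinct (either $\mu\neq\mu'$, or $\mu=\mu'$ but $v_\mu\neq v_{\mu'}$), $(\omega\otimes\tau)_{|_{T\times\Sphere^1}}$ is not multiplicity free, so the triple is not commutative. (Case (VI), with $\mathfrak{g}=\mathfrak{u}(n)$, is handled the same way, using Corollary~\ref{Klambda con regulares} in place of Theorem~\ref{torus}.)
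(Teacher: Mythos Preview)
Your proof is correct and follows the same route as the paper's: invoke Theorem~\ref{torus}, decompose $\omega_{|_{T\times\Sphere^1}}$ as a multiplicity-free sum of characters indexed by the monomials in $\mathcal{P}(\C^n)$, note that tensoring by a single character $\chi_r$ preserves multiplicity-freeness, and for nontrivial $\nu\in\widehat{\SU(n)}$ pick two weight vectors of $\nu$ and shift by suitable monomials to produce a repeated character. Your explicit tracking of the signs $\epsilon_j$ coming from the $\lambda$-dependent complex structure is a legitimate refinement that the paper silently suppresses (a regular $X_\lambda\in\mathfrak{su}(n)$ cannot have all eigenvalues of the same sign), but since your argument is insensitive to these signs it does not alter the strategy; likewise, allowing $\mu=\mu'$ is harmless but unnecessary, as a nontrivial irreducible of $\SU(n)$ always has at least two distinct weights.
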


\begin{proof}
Let $\Toro^{n-1}$ be a maximal tours in $\SU(n)$.
%The Lie algebra $\mathfrak{g}=\mathfrak{su}(n)$ is a semisimple. 
In order to use Theorem \ref{torus}  we analyze how to decompose the metaplectic representation of $\Toro^{n-1}\times \Sphere^1$ acting on $\mathcal{P}(\C^n)$. We obtain
%\begin{equation*}\Toro^{n-1}\times \Sphere^1= \left\lbrace \left( \begin{pmatrix}e^{i\theta_1}& ... & & & ... & 0 & 0 \\0 & ... & & & ... & 0 & 0 \\ ... &...  & & & ... & 0 & 0\\0& ...  & &  &0 & e^{i\theta_{n-1}} & 0\\0& ...  & &  &0 & 0 & e^{-i\sum_{j=1}^{n-1}\theta_j}\end{pmatrix}, e^{i\theta_n}  \right) | (\theta_1,...,\theta_n)\in \R^n\right\rbrace \end{equation*}UNA FORMA EXPLÍCITA
%\begin{equation*}
%\omega_{|_{\Toro^{n-1}\times \Sphere^1}}=\bigoplus_{j\geq 0}\bigoplus_{l_1,...,l_n\in \mathbb{N}_0; \ l_1+...+l_n=j}
%\chi_{(l_n-l_1,...,l_n-l_1,-(l_1+...+l_n))}(\theta_1,...,\theta_{n-1},\theta_n)
%\end{equation*}
%Podemos reparametrizar los caracteres  haciendo el cambio de variables:
%$$\left\{
%\begin{array}{rcl}
 %    \theta_1-\theta_n & = & \varphi_1 
  %\\  & ... & 
  %\\  \theta_{n-1}-\theta_n & = & \varphi_{n-1}
  %\\ -(\theta_1+...+\theta_n) & = & \varphi_n
%\end{array}
%\right.$$
%y nos queda
\begin{equation}
\omega_{|_{\Toro^{n-1}\times \Sphere^1}}=\bigoplus_{ m_1,...,m_n\in \mathbb{Z}_{\geq 0}}
\chi_{(m_1,...,m_n)},
\end{equation}
where $(\chi_{(m_1,...,m_n)}(\theta_1,...,\theta_n))(z_1^{m_1}...z_n^{m_n})=e^{-im_1\theta_1}z_1^{m_1}...e^{-im_n\theta_n}z_n^{m_n}$.
\\ \\
Let $\tau=\nu\otimes\chi_r$ as above. $\tau$  restricted to $\Toro^{n-1}\times \Sphere^1$ decomposes into a sum of characters of the form $\chi_{k_1,...,k_{n-1},r}$ for certain $k_1,...,k_{n-1}\in\mathbb{Z}$.
%(ya que por ejemplo el caracter trivial aparece en $\omega_{|_{\Toro^{n-1}\times \Sphere^1}}$).
\\ \\
It is clear that if $\nu$ is the trivial representation of $\SU(n)$,  $(\omega\otimes\chi_r)_{|_{\Toro^{n-1}\times \Sphere^1}}$ decomposes into a sum of characters without multiplicity since
\begin{equation*}
(\omega\otimes\chi_r)_{|_{\Toro^{n-1}\times \Sphere^1}}=\bigoplus_{ m_1,...,m_{n-1},m_n\in \mathbb{Z}_{\geq 0}}
\chi_{(m_1,...,m_{n-1},m_n+r)}.
\end{equation*}
We suppose $(\nu, W_\nu)$ is  nontrivial, so $\dim(W_\nu)>1$. Let  $\chi_{k_1,...,k_{n-1}}$ and $\chi_{l_1,...,l_{n-1}}$ be two different characters that appear in $\nu_{|_{\Toro^{n-1}}}$. As a result, the characters \begin{equation*}
 \bigoplus_{ m_1,...,m_n\in \mathbb{Z}_{\geq 0}}\chi_{(m_1+k_1,...,m_{n-1}+k_{n-1},m_n+r)} \ \ \text{and} \ \ \bigoplus_{ \tilde{m}_1,...,\tilde{m}_n\in \mathbb{Z}_{\geq 0}}\chi_{(\tilde{m}_1+l_1,...,\tilde{m}_{n-1}+l_{n-1},\tilde{m}_n+r)}.
 \end{equation*}
appear in the decomposition of $(\omega\otimes\tau\otimes\chi_r)_{|_{\Toro^{n-1}\times \Sphere^1}}$. 
 %Como quiero  $m_1$ y  $\tilde{m}_1$ tales que $-m_1+k_1=-\tilde{m}_1+l_1$ (con $k_1$ y $l_1$ fijos), tenemos que se debería cumplir la condición $m_1-\tilde{m}_1=k_1-l_1\neq 0$. 
We obtain multiplicity in the decomposition of $(\omega\otimes\nu\otimes\chi_r)_{|_{\Toro^{n-1}\times \Sphere^1}}$ when we choose, for example, $m_n=\tilde{m}_n=0$ and for $1\leq i\leq n-1$:
\begin{equation}
     \left\{
	       \begin{array}{lll}
		 m_i=k_i-l_i  & \text{; } \tilde{m}_i=0  & \text{if } k_i-l_i \geq 0 \\ 
		 m_i=0 & \text{; } \tilde{m}_i=l_i-k_i & \text{if }
         k_i-l_i < 0  .        
	       \end{array}
	     \right.
\end{equation}
\end{proof}

\begin{proposition}
The triple $(\SU(n)\times \Sphere^1, N(\mathfrak{u}(n),\mathbb{C}^n), \tau)$ is  commutative if and only if $\tau\in\widehat{\Sphere^1}$.
\end{proposition}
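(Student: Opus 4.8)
The plan is to reuse almost verbatim the proof of the previous proposition (case~(V)), since the automorphism group $K=\SU(n)\times\Sphere^1$ and the parametrisation $\tau=\nu\otimes\chi_r$ of $\widehat{K}$ (with $\nu\in\widehat{\SU(n)}$ and $\chi_r\in\widehat{\Sphere^1}$) are unchanged. The one genuine difference is that now $\mathfrak{g}=\mathfrak{u}(n)$ is not semisimple, so Theorem~\ref{torus} is not available; I would replace it by Corollary~\ref{Klambda con regulares}. This corollary applies: $\mathfrak{g}'=\mathfrak{su}(n)\neq\{0\}$, and by the Remark following Lauret's list (or directly, taking $X=\mathrm{diag}(i\theta_1,\dots,i\theta_n)$ with all $\theta_j\neq 0$ and $\sum_j\theta_j=0$) there is $X\in\mathfrak{su}(n)$ with $\pi(X)$ invertible. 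It then reduces commutativity of $(\SU(n)\times\Sphere^1,N(\mathfrak{u}(n),\C^n),\tau)$ to the multiplicity-freeness of $(\omega\otimes\tau)_{|_{T\times\Sphere^1}}$, where $T\subset\SU(n)$ is a maximal torus and $\Sphere^1=U$ is the group of orthogonal intertwining operators of $\pi$.

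The next step is to observe that this multiplicity-free condition is exactly the one solved in case~(V). For a regular element $X_\lambda$ chosen inside $\mathfrak{g}'=\mathfrak{su}(n)$ one has $K_\lambda=C_{\SU(n)}(X_\lambda)\times\Sphere^1=T\times\Sphere^1$, and the attached Heisenberg group $N_\lambda$ carries $V=\C^n$ with symplectic form $\langle\pi(X_\lambda)\cdot,\cdot\rangle$; hence its metaplectic representation $\omega$ decomposes on $\mathcal{P}(\C^n)$, without multiplicity, as the sum of the characters $\chi_{(m_1,\dots,m_n)}$ with $(m_1,\dots,m_n)\in\Z_{\geq 0}^n$ --- the very same decomposition used in case~(V). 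Writing as before $\tau_{|_{T\times\Sphere^1}}$ as a sum of characters $\chi_{(k_1,\dots,k_{n-1},r)}$, I would argue: if $\nu$ is trivial the product $(\omega\otimes\chi_r)_{|_{T\times\Sphere^1}}$ is manifestly a multiplicity-free sum of characters, so the triple is commutative; if $\nu$ is nontrivial, I would pick two distinct weights of $\nu_{|_T}$ and cancel the difference of their coordinates against the nonnegative multi-indices coming from $\omega$, producing a character that occurs at least twice, just as in the previous proof. Consequently the triple is commutative if and only if $\nu$ is trivial, i.e.\ $\tau\in\widehat{\Sphere^1}$.

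The main obstacle is not computational but conceptual: one must verify that enlarging the centre of $\mathfrak{n}$ from $\mathfrak{su}(n)$ (case~(V)) to $\mathfrak{u}(n)$ (case~(VI)) --- the sole structural change --- affects neither the stabiliser $K_\lambda$ nor the metaplectic representation $\omega$ once the regular element is taken in $\mathfrak{g}'=\mathfrak{su}(n)$, so that the extra central line $\mathfrak{c}=\R\,iI$ plays no role. Granting this, the whole analysis is inherited from the previous proposition.
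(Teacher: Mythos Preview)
Your proposal is correct and follows essentially the same approach as the paper: the paper's proof is the single line ``This result holds from the previous theorem and Corollary~\ref{Klambda con regulares},'' and you have simply unpacked what that means --- invoking Corollary~\ref{Klambda con regulares} (in place of Theorem~\ref{torus}, since $\mathfrak{g}=\mathfrak{u}(n)$ is not semisimple) to reduce to the multiplicity-freeness of $(\omega\otimes\tau)_{|_{T\times\Sphere^1}}$, and then observing that this is precisely the condition already analysed in case~(V). Your additional remarks about why the extra central direction in $\mathfrak{u}(n)$ does not affect the analysis are correct and make explicit what the paper leaves implicit.
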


\begin{proof}
This result holds from the previous theorem and Corollary \ref{Klambda con regulares}. 
% if $X_\lambda$ belongs to the center of $\mathfrak{u}(n)$, then $K_\lambda=K=\SU(n)\times \Sphere^1$.  Over $K$, $\omega\otimes \chi_r$ decomposes multiplicity free as
%\begin{equation*}
%\omega\otimes \chi_r=\bigoplus_{j\in\Z_{\geq 0}}\tau_j\otimes\chi_{r-j}.
%\end{equation*}
\end{proof}

%%%%%%%%%%%%%%%%%%%%%%%%%%%VIII%%%%%%%%%%%%%%%%%%%%%%%%%%%%
%%%%%%%%%%%%%%%%%%%%%%%%%%%%%%%%%%%%%%%%%%%%%%%%%%%%%%%%%%%

\subsection*{Case (VII)}

\quad In the item (VII) of the list $\mathfrak{g}=\mathfrak{u}(2)$ acts on $V=(\C^2)^k\oplus (\C^2)^n$ by $\pi$ defined in the following way:
\begin{itemize}
\item[$(i)$] $\mathfrak{u}(2)$ acts on each  of the $k$ components of $(\C^2)^k$ in the natural way,
\item[$(ii)$] in $(\C^2)^n$ the center of $\mathfrak{u}(2)$ acts trivially and the semisimple part acts as $\mathfrak{sp}(1)$ (or $Im(\HH)$) on the left side in each  of the $n$ components of $(\C^2)^n$.
\end{itemize}

Therefore the unitary group of intertwining operators is $U=\UU(k)\times \Sp(n)$. The nilpotent group $N(\mathfrak{u}(2),(\mathbb{C}^2)^k\oplus(\mathbb{C}^2)^n))$ has Lie algebra $\mathfrak{n}=\mathfrak{u}(2)\oplus (\C^2)^k\oplus (\C^2)^n$ and its group of orthogonal automorphisms is $K=\SU(2)\times \UU(k)\times \Sp(n)$. 
A maximal torus on $\SU(2)$ is given by $\Toro^1=\{\left(\begin{smallmatrix} e^{i\theta} & 0\\ 0 & e^{-i\theta} \end{smallmatrix}\right) | \ \theta\in\R\} $ and it is in correspondence with $\{e^{i\theta}| \ \theta\in\R \}$ in $\Sp(1)$.
\\ 

The metaplectic representation $\omega$ of $\Toro^1\times U$ acts on $\mathcal{P}((\C^2)^k\oplus (\C^2)^n)$ in the following way:
\begin{itemize}
\item[(i)] In order to understand the action of $\UU(k)$ on $\mathcal{P}((\C^{2})^k)$ we must think each element $A\in \UU(k)$ as an (unitary) intertwining operator of the action of $\mathfrak{u}(2)$ on $\mathcal{P}((\C^{2})^k)$: since the natural action of $\mathfrak{u}(2)$ on $\C^2$ is irreducible, Schur's lemma implies that $A$ is a linear operator  $A:(\C^2)^k\rightarrow (\C^2)^k$  whose matrix representation is
\begin{equation*}
[A]=\begin{pmatrix} 
a_{11}I & a_{12}I & ... & a_{1k}I\\ 
a_{21}I & a_{22}I & ... & a_{2k}I\\
... & ... & ... & ...\\
... & ... & ... & ...\\
a_{k1}I & a_{k2}I & ... & a_{kk}I
\end{pmatrix}, \textrm{ where } I=\begin{pmatrix} 
1 & 0\\ 
0 & 1
\end{pmatrix}, \text{ and } a_{i,j}\in \C.
\end{equation*}
From here the action of $\UU(k)$ on $\mathcal{P}((\C^2)^k)$  splits as $\mathcal{P}(\C^k)\otimes \mathcal{P}(\C^k)$ and also it can be written as $\bigoplus_{r,s\in \Z_{\geq 0}}\mathcal{P}_r(\C^k)\otimes \mathcal{P}_s(\C^k)$. Although $\mathcal{P}_r(\C^k)\otimes \mathcal{P}_s(\C^k)$ is not irreducible as $\UU(k)$-module, it is multiplicity free. Moreover, from $\cite{Yakimova}$,  $(\UU(n),N(\R,\C^n))$ is a strong Gelfand pair. In particular, for each $s\in\Z_{\geq 0}$, the sum $\bigoplus_{r\in \Z_{\geq 0}}\mathcal{P}_r(\C^k)\otimes \mathcal{P}_s(\C^k)$ is multiplicity free as $\UU(k)$-module. 

Apart from that, $\Toro^1\subset \SU(2)$ acts on each polynomial $p\in\mathcal{P}((\C^{2})^k)$ by
\begin{equation*}
p(u_1,v_1,...,u_k,v_k) \mapsto p(e^{i\theta}u_1,e^{-i\theta}v_1,...,e^{i\theta}u_k,e^{-i\theta}v_k), \quad (u_i, v_i\in\C).
\end{equation*}
Therefore the action of $\Toro^1\times \UU(k)$ on $\mathcal{P}((\C^2)^k)$ decomposes without multiplicity into
\begin{equation}\label{meta viii parte 1}
\bigoplus_{r, s\in \Z_{\geq 0}}\chi_{r-s}\otimes\upsilon_{(r)}\otimes\upsilon_{(s)},
\end{equation}
denoting by $\upsilon_{(r)}$ and $\upsilon_{(s)}$ the classical irreducible representations of $\UU(k)$ on $\mathcal{P}_r(\C^k)$ and $\mathcal{P}_s(\C^k)$ respectively and where $\chi_{r-s}(e^{i\theta})=e^{-(r-s)i\theta}$.

According to $\cite{Yakimova}$, for each  $s\in \Z_{\geq 0}$, 
\begin{equation*}
\bigoplus_{r\in \Z_{\geq 0}}\chi_{r}\otimes\upsilon_{(r)}\otimes\upsilon_{(s)}
\end{equation*}
is multiplicity free. Hence
\begin{equation*}
\bigoplus_{r, s\in \Z_{\geq 0}}\chi_{r-s}\otimes\upsilon_{(r)}\otimes\upsilon_{(s)}=\bigoplus_{s\in \Z_{\geq 0}}\chi_{-s}\otimes\left(\bigoplus_{r\in \Z_{\geq 0}}\chi_{r}\otimes\upsilon_{(r)}\otimes\upsilon_{(s)}\right)
\end{equation*}
is also multiplicity free.

% and the character $\chi_{r-s}$ distinguishes when the parameter $s$ runs on $\Z_{\geq 0}$ in (\ref{meta viii parte 1}). Then the action of $\Toro^1\times \UU(k)$ on $\mathcal{P}((\C^2)^k)$ is multiplicity free. 

\item[(ii)] For each $j\in\Z_{\geq 0}$ the action of $\Sp(n)$ on $\mathcal{P}_j(\C^{2n})$, denoted by $\eta_{(j)}$, is irreducible. $\Toro^1\subset \Sp(1)$ acts on each homogeneous polynomial by its degree, in the sense that if $p \in\mathcal{P}_j(\C^{2n})$ the action is $(e^{i\theta},p)\mapsto e^{-ji\theta}p$. Therefore the action of $\Toro^1\times \Sp(n)$ on $\mathcal{P}(\C^{2n})$ decomposes into irreducible representations as    
\begin{equation}
\bigoplus_{j\in\Z_{\geq 0}}\chi_j\otimes \eta_{(j)}.
\end{equation}
\end{itemize}

In conclusion, the metaplectic representation decomposes multiplicity free into
\begin{equation}
\omega_{|_{\Toro^1 \times U}}=\left(\bigoplus_{r, s\in \Z_{\geq 0}}\chi_{r-s}\otimes\upsilon_{(r)}\otimes\upsilon_{(s)}\right)\otimes \left(\bigoplus_{j\in\Z_{\geq 0}}\chi_j\otimes \eta_{(j)}\right)
\end{equation}
(but we must keep in mind that each term $\upsilon_{(r)}\otimes\upsilon_{(s)}$ is not irreducible).
\\ 

An irreducible unitary representation of $K$ is given by $\tau=\nu_d\otimes \upsilon \otimes \eta$, where $(\nu_d,\mathcal{P}_d(\C^{2}))$ is the well known irreducible unitary  representation of $\SU(2)$ on the homogeneous polynomials on $\C^{2}$ of degree $d$, $\upsilon\in\widehat{\UU(k)}$ and $\eta\in \widehat{\Sp(n)}$. This representation restricted to $\Toro^1\times U$ decomposes into irreducible representations as
\begin{equation*}
\tau_{|_{\Toro^1\times U}}=(\oplus_{i=0}^d\chi_{d-2i})\otimes\upsilon\otimes\eta.
\end{equation*}

Therefore $\omega\otimes\tau$ restricted to $\Toro^1\times U$ decomposes into
\begin{equation}\label{meta-tau VIII}
(\omega\otimes\tau)_{|_{\Toro^1\times U}}=\bigoplus_{r,s,j\in\Z_{\geq 0}}\oplus_{i=0}^d\chi_{r-s+j+d-2i}\otimes \upsilon_{(r)}\otimes \upsilon_{(s)}\otimes \upsilon \otimes\eta_{(j)}\otimes\eta.
\end{equation}

\begin{proposition}\label{teo ej viii}
The triple $(\SU(2)\times \UU(k)\times \Sp(n), N(\mathfrak{u}(n),(\mathbb{C}^2)^k\oplus(\mathbb{C}^2)^n), \tau)$ is commutative if and only if $\tau\in\widehat{\UU(k)}$.
\end{proposition}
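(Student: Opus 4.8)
The plan is to apply Theorem~\ref{torus} (via the remark ensuring the existence of $X\in\mathfrak{g}'$ with $\Ker(\pi(X))=\{0\}$, so Corollary~\ref{Klambda con regulares} applies since $\mathfrak{u}(2)$ is non-abelian): the triple is commutative if and only if $(\omega\otimes\tau)_{|_{\Toro^1\times U}}$ is multiplicity free. So everything reduces to analyzing the decomposition (\ref{meta-tau VIII}). First I would dispose of the ``if'' direction: suppose $\tau=\upsilon\in\widehat{\UU(k)}$, i.e.\ $d=0$ and $\eta$ trivial. Then (\ref{meta-tau VIII}) collapses to $\bigoplus_{r,s,j}\chi_{r-s+j}\otimes\upsilon_{(r)}\otimes\upsilon_{(s)}\otimes\upsilon\otimes\eta_{(j)}$. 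Because the $\eta_{(j)}$ are distinct irreducibles of $\Sp(n)$ indexed by their degree $j$, and the $\Toro^1$-character $\chi_{r-s+j}$ records $r-s+j$, fixing the $\Sp(n)$-type pins down $j$ and the character pins down $r-s$; so it suffices to check that for each fixed value of $r-s$ the sum $\bigoplus_{r-s=\text{const}}\upsilon_{(r)}\otimes\upsilon_{(s)}\otimes\upsilon$ is multiplicity free as a $\UU(k)$-module. This is exactly the content already extracted from \cite{Yakimova} in part (i) of the computation of $\omega_{|_{\Toro^1\times U}}$: the strong Gelfand pair property of $(\UU(k),N(\R,\C^k))$ says $\bigoplus_r \chi_r\otimes\upsilon_{(r)}\otimes\upsilon_{(s)}$ is multiplicity free for each $s$, and tensoring by the fixed $\upsilon$ and reorganizing the grading gives what we need. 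I would spell out this bookkeeping carefully, since the two independent $\Toro^1$-gradings (from $r-s$ and from $j$, plus the shift by $d-2i$) must be tracked simultaneously.

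**The ``only if'' direction.** Here I would assume $\tau=\nu_d\otimes\upsilon\otimes\eta$ is \emph{not} of the form $\upsilon\in\widehat{\UU(k)}$, meaning either $d\geq 1$ or $\eta$ nontrivial, and produce a repeated constituent in (\ref{meta-tau VIII}). If $\eta\in\widehat{\Sp(n)}$ is nontrivial, I would run the same argument as in Proposition~\ref{teo ej i} and the Case (III) proof: fix $j=0$ against $j=2$ (with the $\Toro^1$-character and the $\UU(k)$-types held equal by compensating the shift $j$ with a shift in $r$ or $s$, e.g.\ comparing $(r,s,j)=(0,2,0)$ with $(0,0,2)$ so that $r-s+j$ is unchanged), and use Corollary~\ref{coro 1 sp}: $\eta$ appears inside $\eta_{(2)}\otimes\eta$, so the $\Sp(n)$-factor $\eta$ (paired with the same $\UU(k)$-constituents and the same $\Toro^1$-character) occurs at least twice. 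If instead $d\geq 1$ (and $\eta$ trivial, otherwise we are already done), I would exploit the summation $\oplus_{i=0}^d\chi_{d-2i}$ exactly as in the Case (I) and Case (II) proofs: the index $i$ gives a nontrivial spread of $\Toro^1$-characters, and I can compensate a change in $i$ by a change in $r-s$ while keeping the $\UU(k)$- and $\Sp(n)$-types fixed. Concretely, comparing $(i,r,s)$ with $(i',r',s')$ chosen so that $r-s+d-2i=r'-s'+d-2i'$ while the multiset of $\UU(k)$-constituents of $\upsilon_{(r)}\otimes\upsilon_{(s)}\otimes\upsilon$ overlaps that of $\upsilon_{(r')}\otimes\upsilon_{(s')}\otimes\upsilon$ — e.g.\ $(i,r,s)=(0,0,0)$ and $(i,r,s)=(1,1,1)$ with $d\geq 2$, or a one-box adjustment when $d=1$ — forces a common irreducible constituent to reappear.

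**Main obstacle.** The delicate point is the $d=1$ subcase (and more generally making sure the $\UU(k)$-side genuinely produces a \emph{repeated} constituent rather than merely a nonempty overlap that could be killed by the differing $\Toro^1$-characters). Because $\upsilon_{(r)}\otimes\upsilon_{(s)}$ decomposes by the classical $\GL$ Pieri/Littlewood--Richardson rule, I would identify one explicit $\UU(k)$-irreducible — say the one with partition obtained from $\upsilon$ by adding and removing a single box along a row — that occurs both in $\upsilon_{(r)}\otimes\upsilon_{(s)}\otimes\upsilon$ for the first choice of $(r,s)$ and in $\upsilon_{(r')}\otimes\upsilon_{(s')}\otimes\upsilon$ for the second, with the $\Toro^1$-characters and $\Sp(n)$-types matched by the degree-counting identity above. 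This is the analogue, on the $\UU(k)$ side, of Corollary~\ref{coro 1 sp} on the $\Sp(n)$ side, and it is the one computation I would need to do by hand rather than cite. Once that box-moving lemma is in place, assembling the two cases ($d\geq 1$ versus $\eta$ nontrivial) into the ``only if'' statement is routine, and together with the first paragraph this proves the proposition.
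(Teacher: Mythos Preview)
Your overall plan---reduce via Corollary~\ref{Klambda con regulares} and then analyse the decomposition~(\ref{meta-tau VIII})---is exactly the paper's, but your execution diverges from it in two places, and the concrete choices you write down do not work.

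\textbf{The ``only if'' direction.} Your numerical examples are arithmetically wrong. For $\eta$ nontrivial you suggest comparing $(r,s,j)=(0,2,0)$ with $(0,0,2)$ ``so that $r-s+j$ is unchanged'', but $r-s+j$ equals $-2$ in the first case and $2$ in the second, so the $\Toro^1$-characters do not match (and the $\UU(k)$-pieces $\upsilon_{(2)}\otimes\upsilon$ and $\upsilon$ have different central characters, so they share no constituent either). The paper's choice is $(j,r,s)=(0,1,0)$ versus $(2,0,1)$: then $r-s+j=1$ in both, the $\UU(k)$-blocks $\upsilon_{(1)}\otimes\upsilon_{(0)}\otimes\upsilon$ and $\upsilon_{(0)}\otimes\upsilon_{(1)}\otimes\upsilon$ coincide, and the $\Sp(n)$-pieces $\eta_{(0)}\otimes\eta$ and $\eta_{(2)}\otimes\eta$ share $\eta$ by Corollary~\ref{coro 1 sp}. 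Likewise, for $d\geq 1$ your sample $(i,r,s)=(0,0,0)$ versus $(1,1,1)$ gives $\Toro^1$-characters differing by~$2$. The paper instead keeps $(r,s)$ fixed and compares $(i,j)=(0,0)$ with $(1,2)$: the shift $j\mapsto j+2$ exactly cancels $-2i\mapsto -2i-2$, the $\UU(k)$-block is literally the same, and the $\Sp(n)$-pieces again overlap via Corollary~\ref{coro 1 sp}. In particular the paper never needs the $\UU(k)$-side ``box-moving lemma'' you flag as the main obstacle: the compensation for the $i$-shift is done with the $\Sp(n)$-index $j$, not with $(r,s)$.

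\textbf{The ``if'' direction.} Here the paper takes a different and much shorter route than you propose. Rather than invoking Yakimova on $\bigoplus_{r-s=c}\upsilon_{(r)}\otimes\upsilon_{(s)}\otimes\upsilon$, the paper restricts one step further, to $\Toro^1\times Z(\UU(k))\times\Sp(n)$: the centre of $\UU(k)$ acts on each block $\upsilon_{(r)}\otimes\upsilon_{(s)}\otimes\upsilon$ by the single character $\chi_{r+s+l}$, so the triple of invariants $(r-s+j,\; r+s+l,\; \eta_{(j)})$ recovers $(r,s,j)$ uniquely via the linear system $r-s=r'-s'$, $r+s=r'+s'$. Your sentence ``tensoring by the fixed $\upsilon$ and reorganizing the grading gives what we need'' is not a proof: tensoring a multiplicity-free $\UU(k)$-module by a fixed irreducible need not stay multiplicity free, so the Yakimova input does not transfer in the way you suggest. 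The centre trick sidesteps this entirely.
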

\begin{proof}Let $\tau=\nu_d\otimes \upsilon \otimes \eta\in\widehat{\SU(2)\times \UU(k)\times \Sp(n)}$.
\\ \\
Let $\eta$ be nontrivial. From Corollary \ref{coro 1 sp}, $\eta$ appears in the decomposition into irreducible representations of $\eta_{(2)}\otimes\eta$. Thus, considering on the one hand $j=0$, $r=1$, $s=0$ and on the other hand $j=2$, $r=0$, $s=1$, we obtain repetition in (\ref{meta-tau VIII}).
\\ \\
Let $\nu_{d}$ be nontrivial, i.e., $d\geq 1$. We obtain repetition of representations in (\ref{meta-tau VIII}) considering, on the one hand $i=0$ and $j=0$ and on the other hand $i=1$ and $j=2$. 
\\ \\
In conclusion, if $\nu_d$ or $\eta$ are nontrivial, from Corollary \ref{Klambda con regulares}, we obtain  non-commutative triples. 
\\ \\
Assume $\nu_d$ and $\eta$ are trivial. We will show that the decomposition \begin{equation*}
(\omega\otimes \tau)_{|_{\Toro^1\times \UU(k)\times \Sp(n)}}=\bigoplus_{r, s, j\in\Z_{\geq 0}}\chi_{r-s+j}\otimes\upsilon_{(r)}\otimes \upsilon_{(s)}\otimes\upsilon\otimes \eta_{(j)}.
\end{equation*}
is multiplicity free by noting that its restriction to $\Toro^1\times Z(\UU(k))\times \Sp(n)$ is multiplicity free, where $Z(\UU(k))$ is the center of $\UU(k)$. Let $\chi_l$ be the character associated to $\upsilon_{|_{Z(\UU(k))}}$. 
 To each representation $\upsilon_{(r)}\in\widehat{\UU(k)}$ acting on the space of homogeneous polynomials of degree $r$, the character associated corresponds to the degree $r$, i.e., $(\upsilon_{(r)})_{|_{Z(\UU(k))}}=\chi_r$. Thus,
\begin{equation}\label{rest al centro}
(\omega\otimes\upsilon)_{|_{\Toro^1\times Z(\UU(k))\times \Sp(n)}}=
\bigoplus_{r, s, j\in\Z_{\geq 0}}
\chi_{r-s+j}\otimes \chi_{r+s+l}\otimes\eta_{(j)}.
\end{equation}
The parameter $l$ is fixed and we could assume that the parameter $j$ is also fixed because when $j$ runs on $\Z_{\geq 0}$ the factor $\eta_{(j)}$ changes. Therefore we obtain multiplicity in (\ref{rest al centro}) if there exist parameters $r, s, r', s'\in \Z_{\geq 0}$, $r\neq r'$, $s\neq s'$, such that 
\begin{equation*}
     \left\{
	       \begin{array}{ll}
		 r-s=r'-s' \\
		 r+s=r'+s'
	       \end{array}
	     \right.
\end{equation*}
and this is not possible.
\\ \\
In conclusion, if $\nu_m$ or $\eta$ are trivial, from Corollary \ref{Klambda con regulares}, we obtain  commutative triples.
\end{proof}

%%%%%%%%%%%%%%%%%%%%%%%%%%X%%%%%%%%%%%%%%%%%%%%%%%%%%%%%%%
%%%%%%%%%%%%%%%%%%%%%%%%%%%%%%%%%%%%%%%%%%%%%%%%%%%%%%%%%%%

\subsection*{Case (VIII)}

\quad In this case the Lie algebra $\mathfrak{g}$ will not be  semisimple. Let $\alpha$ and $\beta$ be two nonnegative integers and let $\{m_i\}_{i=1}^\beta$ be integers greater or equal than 3.  We take $$\mathfrak{g}:=\mathfrak{su}(m_1)\oplus...\oplus\mathfrak{su}(m_\beta)\oplus\mathfrak{su}(2)\oplus...\oplus\mathfrak{su}(2)\oplus \mathfrak{c},$$ where $\mathfrak{c}$ is its center  and there are $\alpha$  copies of $\mathfrak{su}(2)$. The abelian component satisfies that $1 \leq \dim(\mathfrak{c})< \alpha+\beta$. Let us consider the vector spaces 
\begin{gather*}
V_1:=\C^{m_1}, \cdots, \ V_\beta:=\C^{m_\beta}, \  V_{\beta+1}:=\C^{2k_1+2n_1}, \cdots,  \ V_{\beta+\alpha}:=\C^{2k_\alpha+2n_\alpha}, 
\end{gather*}
where $k_j\geq 1$ and $n_j\geq 0$ for all $1\leq j\leq \alpha$ and let $V:=\bigoplus_{i=1}^{\beta+\alpha} V_i$. 
 The nilpotent group  $N(\mathfrak{g}, V)$ is given in the following way: 
 \begin{itemize}
 \item[-] For each $1\leq i\leq \beta+\alpha$, $\mathfrak{c}$ has a maximal subspace, denoted by $\mathfrak{c}_i$, of dimension one acting non- trivially on $V_i$.
 \item[-] For $1\leq i\leq \beta$,
 $\mathfrak{su}(m_i)\oplus \mathfrak{c}_i$  acts on  $V_i$ as in the case (VI). That is,  since $\mathfrak{su}(m_i)\oplus \mathfrak{c}_i$ is isomorphic to $\mathfrak{u}(m_i)$, it acts in the natural way on $\C^{m_i}$. Apart from that, $\mathfrak{su}(m_i)\oplus \mathfrak{c}_i$ acts  trivially on $V_j$ for all $j\neq i$.
 \item[-] For $\beta+1<i\leq \beta+\alpha $, $\mathfrak{su}(2)\oplus\mathfrak{c}_i$  acts on $V_i$ as in the case (VII). That is, it acts naturally as $\mathfrak{u}(2)$ on $(\C^2)^{k_i}$ and $\mathfrak{su}(2)$ acts naturally as $\mathfrak{sp}(1)$ on $(\HH)^{n_i}$ (which is isomorphic to $(\C^2)^{n_i}$ a real vector spaces). Also,   $\mathfrak{su}(2)\oplus\mathfrak{c}_i$ acts trivially on $V_j$ for all $j\neq i$. 
 \end{itemize}

    The connected group of unitary intertwining operators is $$U:=\Sphere^1\times...\times \Sphere^1\times \UU(k_1)\times \Sp(n_1)\times...\times \UU(k_\alpha)\times \Sp(n_\alpha),$$ where there are $\beta$ copies of $\Sphere^1$.
\\

% \quad  In this case the Lie algebra $\mathfrak{g}=\mathfrak{su}(m_1)\oplus...\oplus\mathfrak{su}(m_r)\oplus\mathfrak{c}$ is not semisimple, $\mathfrak{c}$ is its center and has dimension $1 \leq \dim(\mathfrak{c})< r$. The nilpotent group  $N(\mathfrak{su}(m_1)\oplus...\oplus\mathfrak{su}(m_r)\oplus\mathfrak{c}, V_1\oplus ... \oplus V_r))$ is given in the following way, with the following actions: For each $1\leq i\leq r$,  $\mathfrak{su}(m_i)$ acts trivially on $V_j$ for all $j\neq i$ and $\mathfrak{c}$ has a maximal subspace, denoted by $\mathfrak{c}_i$, of dimension one acting non trivially on $V_i$. 
% For $m_i\geq 3$,  $\mathfrak{su}(m_i)\oplus \mathfrak{c}_i$ (which is isomorphic to $\mathfrak{u}(m_i)$) acts on  $V_i=\C^{m_i}$ as in example (VI), thus its intertwining operators form the group $\Sphere^1$. For $m_i=2$, $\mathfrak{su}(2)\oplus\mathfrak{c}_i$ (which is isomorphic to $\mathfrak{u}(2)$)  acts on $V_i=(\C^2)^{k_i}\times (\C^2)^{n_i}$ as in example (VII). Therefore its intertwining operators form the group $\UU(k_i)\times \Sp(n_i)$.
% \\ 

Let us consider first the case where $\mathfrak{g}=\mathfrak{su}(m)\oplus\mathfrak{su}(2)\oplus \mathfrak{c}$ and $V=\C^m \oplus (\C^2)^k\oplus (\C^2)^n$, for some integers $m\geq 3$, $k\geq 1$ and $n\geq 0$.  Here we have $K=\SU(m)\times \SU(2)\times \Sphere^1\times \UU(k)\times \Sp(n)$. Let $\Toro^{m-1}$ be a maximal torus of $\SU(m)$ and $\Toro^1$ a maximal torus of $\SU(2)$.
\\ 

The metaplectic representation $\omega$ of $\Toro^{m-1}\times \Toro^1\times \Sphere^1\times \UU(k)\times \Sp(n)$ acts on $\mathcal{P}(\C^m\oplus\C^{2k}\oplus\C^{2n})$ in the following way,
\begin{itemize}
\item[$(i)$] $\Toro^1\times \UU(k)\times \Sp(n)$ acts as in the case (VII) on $\mathcal{P}(\C^{2k}\oplus \C^{2n})$ and
\item[$(ii)$] $\Toro^{m-1}\times \Sphere^1$ is an $m$-dimensional torus in $\UU(m)$ that acts on $\mathcal{P}(\C^m)$ in the natural way.
\end{itemize}

Thus, the representation of ${\Toro^{m-1}\times \Toro^1 \times \Sphere^1 \times \UU(k)\times \Sp(n)}$ decomposes into
\begin{equation}
\left(\bigoplus_{j_1,...j_m\in\Z_{\geq 0}}\chi_{(j_1,...,j_m)}\right)\otimes\left(\bigoplus_{r, s\in \Z_{\geq 0}}\chi_{r-s}\otimes\upsilon_{(r)}\otimes\upsilon_{(s)}\right)\otimes \left(\bigoplus_{j\in\Z_{\geq 0}}\chi_j\otimes \eta_{(j)}\right)
\end{equation}
where in the first factor of the tensor product we have a sum of characters of the form $\chi_{(j_1,...,j_m)}(\theta_1,...,\theta_m):=e^{-j_1i\theta_1}...e^{-j_mi\theta_m}$, whereas the second factor, $\left(\bigoplus_{r, s\in \Z_{\geq 0}}\chi_{r-s}\otimes\upsilon_{(r)}\otimes\upsilon_{(s)}\right)\otimes \left(\bigoplus_{j\in\Z_{\geq 0}}\chi_j\otimes \eta_{(j)}\right)$, is the same as in the case (VII). Remember that each term $\upsilon_{(r)}\otimes\upsilon_{(s)}$ is not irreducible, but anyway $\omega_{|_{\Toro^{m-1}\times \Toro^1 \times \Sphere^1 \times \UU(k)\times \Sp(n)}}$ is multiplicity free \cite{Yakimova}.
\begin{proposition}\label{teo 1 ej x}
The triple $(\SU(m)\times \SU(2)\times \Sphere^1\times \UU(k)\times \Sp(n), N(\mathfrak{su}(m)\oplus\mathfrak{su}(2)\oplus \mathfrak{c},\C^m \oplus (\C^2)^k\oplus (\C^2)^n, \tau)$ is  commutative if and only if $\tau\in\widehat{\Sphere^1\times \UU(k)}$.
\end{proposition}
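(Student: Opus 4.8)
The plan is to reduce to the two cases already handled, (V) and (VII), by exploiting the direct‑product structure. Since $\mathfrak{g}'=\mathfrak{su}(m)\oplus\mathfrak{su}(2)\neq\{0\}$ and, by the Remark, there is $X\in\mathfrak{g}'$ with $\Ker(\pi(X))=\{0\}$, Corollary~\ref{Klambda con regulares} reduces the problem to deciding when $(\omega\otimes\tau)_{|_{T\times U}}$ is multiplicity free, where $T=\Toro^{m-1}\times\Toro^1$ is a maximal torus of $G=\SU(m)\times\SU(2)$ and $U=\Sphere^1\times\UU(k)\times\Sp(n)$. Everything from now on happens at the level of this restriction.

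First I would regroup $T\times U\cong(\Toro^{m-1}\times\Sphere^1)\times(\Toro^1\times\UU(k)\times\Sp(n))$ and observe that, acting on $\mathcal{P}(\C^m)\otimes\mathcal{P}((\C^2)^k\oplus(\C^2)^n)$, the first factor acts only on $\mathcal{P}(\C^m)$ (as an $m$‑dimensional torus of $\UU(m)$) and the second only on $\mathcal{P}((\C^2)^k\oplus(\C^2)^n)$ (exactly as in Case~(VII)). Writing $\tau=\mu\otimes\nu_d\otimes\chi_r\otimes\upsilon\otimes\eta$ with $\mu\in\widehat{\SU(m)}$, $\nu_d\in\widehat{\SU(2)}$, $\chi_r\in\widehat{\Sphere^1}$, $\upsilon\in\widehat{\UU(k)}$ and $\eta\in\widehat{\Sp(n)}$, both $\omega$ and $\tau$ split accordingly, so that
\begin{equation*}
(\omega\otimes\tau)_{|_{T\times U}}\ \cong\ \Big[\Big(\bigoplus_{\vec j\in\Z_{\geq 0}^{m}}\chi_{\vec j}\Big)\otimes\mu_{|_{\Toro^{m-1}}}\otimes\chi_r\Big]\ \boxtimes\ \Big[\big(\omega'\otimes\nu_d\otimes\upsilon\otimes\eta\big)_{|_{\Toro^1\times\UU(k)\times\Sp(n)}}\Big],
\end{equation*}
where $\omega'$ is the metaplectic representation on $\mathcal{P}((\C^2)^k\oplus(\C^2)^n)$ of Case~(VII) and $\boxtimes$ is the outer tensor product over the two factor groups. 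An outer tensor product of representations of a direct product of groups is multiplicity free if and only if both of its factors are.

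Now the first bracket is exactly the representation analysed in Case~(V) (with $n$ replaced by $m$ and $\nu$ by $\mu$): by the argument given there it is multiplicity free if and only if $\mu$ is trivial. The second bracket is exactly the decomposition~(\ref{meta-tau VIII}) of Case~(VII) for the representation $\nu_d\otimes\upsilon\otimes\eta$, which by Proposition~\ref{teo ej viii} is multiplicity free if and only if $\nu_d$ and $\eta$ are trivial. Combining the two, $(\omega\otimes\tau)_{|_{T\times U}}$ is multiplicity free precisely when $\mu$, $\nu_d$ and $\eta$ are all trivial, that is, when $\tau\in\widehat{\Sphere^1\times\UU(k)}$; by Corollary~\ref{Klambda con regulares} this is equivalent to commutativity of the triple.

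The main obstacle is the bookkeeping in the second paragraph: one has to check that the intertwiner circle $\Sphere^1$ acts as scalars on $\C^m$ and trivially on $(\C^2)^k\oplus(\C^2)^n$, that $\Toro^1\subset\SU(2)$ acts only on $(\C^2)^k\oplus(\C^2)^n$, and hence that the restriction to $T\times U$ of the metaplectic representation (and of $\tau$) really is the displayed outer tensor product. Once this is in place, multiplicativity of the multiplicity‑free property over a direct product of groups and the two already‑proved cases close the argument with no new representation‑theoretic computation.
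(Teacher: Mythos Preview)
Your proof is correct and follows essentially the same route as the paper: reduce via Corollary~\ref{Klambda con regulares} to $(\omega\otimes\tau)_{|_{T\times U}}$, split along the product $(\Toro^{m-1}\times\Sphere^1)\times(\Toro^1\times\UU(k)\times\Sp(n))$, and invoke the analyses of Cases~(V) and~(VII) on the two factors. The only cosmetic difference is that you phrase the splitting as an outer tensor product and use the general fact that $A\boxtimes B$ is multiplicity free iff both $A$ and $B$ are, whereas the paper redoes the Case~(V) weight argument inline for the $\mu$-nontrivial direction; your packaging is a bit cleaner but the content is the same.
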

\begin{proof}
An irreducible representation of $K$ is given by the tensor product $\tau=\nu\otimes\nu_d\otimes\chi_t\otimes \upsilon \otimes \eta$, where $\nu\in\widehat{\SU(m)}$, $(\nu_d,\mathcal{P}_d(\C^{2}))$ is the well known irreducible representation of $\SU(2)$ on the homogeneous polynomials on $\C^{2}$ of degree $d$, $\chi_t$ is a character of $\Sphere^1$ ($t\in\Z$), $\upsilon\in\widehat{\UU(k)}$ and $\eta\in \widehat{\Sp(n)}$.
\\ \\
First of all note that if $\nu_d$ or $\eta$ are nontrivial,  from Proposition \ref{teo ej viii},  we have multiplicity in the decomposition of $\omega\otimes\tau$ when we restrict it to  $\Toro^{m-1}\times \Toro^1 \times \Sphere^1 \times \UU(k)\times \Sp(n)$. Hence we suppose $\nu_d$ and $\eta$ trivial.
\\ \\
Now, let $\nu\in\widehat{\SU(m)}$ be nontrivial. 
% The complex Lie algebra $\mathfrak{sl}(m)$ is the complexification of $\mathfrak{su}(m)$. Let $\mathfrak{h}$ be the Cartan subalgebra of $\mathfrak{sl}(m)$ consisting of the all the $m\times m$ diagonal matrices over $\C$ with zero trace.
% Let $\delta_{i,j}$ denote the Kronecker delta. Writing 
% $H_i:=\delta_{i,i}-\delta_{m,m}$, we have that $\mathfrak{h}$ is a complex vector space generated by $\{ H_1,...,H_{m-1} \}$. Let 
% $\{L_1,...,L_{m-1}\}$
% be its dual basis in the dual space $\mathfrak{h}^*$. The fundamental weights are given by $\sum_{i=1}^lL_i$ for $1\leq l\leq m-1$ and the simple roots are $L_1-L_2, ,..., L_{m-2}-L_{m-1}$. From the Theorem of the Highest Weight,  every irreducible representation of $\mathfrak{sl}(m)$ corresponds to a nonnegative integer linear combination of the fundamental weights. Also, fixed a representation of $\mathfrak{sl}(m)$, every weight of it is given by the highest weight minus a finite linear combinations over $\Z_{\geq 0}$ of simple roots. 
From the theorem of the highest weight, $\nu $ can be parametrized in terms of a partition $(\nu_1,...,\nu_{m-1})$  %(where  $\nu_i\in\Z_{\geq 0}$ $\forall 1\leq i\leq m-1$ and $\nu_1\geq \nu_2\geq ... \geq \nu_{m-1}$). 
%We also denote $(\nu_1,...,\nu_{m-1})$ by $\nu$. 
%Let $a:=(a_1,-a_1+a_2,...,-a_{n-3}+a_{n-2},-a_{m-1})$.  
and every weight of $\nu$ is of the form
\begin{equation*}
\nu-a:=(\nu_1-a_1,\nu_2+a_1-a_2,...,\nu_{n-2}+a_{n-3}-a_{n-2},\nu_{m-1}+a_{m-1}),
\end{equation*}
where $a_i\in\Z_{\geq 0}$ for all $1\leq i\leq m-1$. 
% \\ \\
% Every weight of the representation $\nu$ is of the form
% \begin{align*}
% &\sum_{i=1}^{m-1}\nu_iL_i - \sum_{i=1}^{m-2}a_{i}(L_{i}-L_{i+1})=\\
% &(\nu_1-a_1)L_1+(\nu_2+a_1-a_2)L_2+...+(\nu_{n-2}+a_{n-3}-a_{n-2})L_{m-2}+(\nu_{m-1}+a_{m-1})L_{m-1},
% \end{align*}
% where $a_i\in\Z_{\geq 0}$ for all $1\leq i\leq m-1$. 
Let $(\nu_1,...,\nu_{m-1})$ and $\nu-a$
% \begin{equation*}
%      \left\{
% 	       \begin{array}{ll}
% 		 \nu=(\nu_1,...,\nu_{m-1}) \text{ and}\\
% 		 \nu-a:=(\nu_1-a_1,\nu_2+a_1-a_2,...,\nu_{n-2}+a_{n-3}-a_{n-2},\nu_{m-1}+a_{m-1})
% 	       \end{array}
% 	     \right.
% \end{equation*}
be two different weights of the representation $\nu$ with $a_i\in\Z_{\geq 0}$ $\forall i$. 
In the decomposition of $\omega\otimes(\nu\otimes\chi_t\otimes\upsilon)$ restricted to $\Toro^{m-1}\times \Toro^1 \times \Sphere^1 \times \UU(k)\times \Sp(n)$ we have, in particular, the following terms
\begin{equation*}
\left(\bigoplus_{j_1,...j_m\in\Z_{\geq 0}}\chi_{\nu+(j_1,...,j_{m-1})}\otimes\chi_{j_m+t}\right)\otimes\left(\bigoplus_{r, s\in \Z_{\geq 0}}\chi_{r-s}\otimes\upsilon_{(r)}\otimes\upsilon_{(s)}\otimes \upsilon\right)\otimes \left(\bigoplus_{j\in\Z_{\geq 0}}\chi_j\otimes \eta_{(j)}\right)
\end{equation*}
and
\begin{equation*}
\left(\bigoplus_{j_1',...j_m'\in\Z_{\geq 0}}\chi_{\nu-a+(j_1',...,j_{m-1}')}\otimes\chi_{j_m'+t}\right)\otimes\left(\bigoplus_{r, s\in \Z_{\geq 0}}\chi_{r-s}\otimes\upsilon_{(r)}\otimes\upsilon_{(s)}\otimes \upsilon\right)\otimes \left(\bigoplus_{j\in\Z_{\geq 0}}\chi_j\otimes \eta_{(j)}\right).
\end{equation*}
We obtain multiplicity considering for example,
\begin{equation*}
     \left\{
	       \begin{array}{ll}
		 j_1=0, \ \ \ \ \ \ \ \ \ \ \ \ j_1'=a_1, \\
		 j_i=a_i-a_{i+1}, \ \ j_i'=0\ \ \ \ \ \ \ \ \ \ \ \ \ \ \ \ \  \text{ if } a_i-a_{i+1}\geq 0 \ \text{ for } 1<i<m-1,\\
         j_i=0, \ \ \ \ \ \ \ \ \ \ \ \ j_i'=-(a_i-a_{i+1})\ \ \text{ if } a_i-a_{i+1}< 0 \ \text{ for } 1<i<m-1, \\
         j_{m-1}=a_{m-1}, \ \ \ j_{m-1}'=0.
	       \end{array}
	     \right.
\end{equation*}
In conclusion, if $\nu_d$, $\eta$ or $\nu$ are nontrivial, from Corollary \ref{Klambda con regulares}, we obtain  non-commutative triples. This leads us to suppose that $\nu_d$, $\eta$ and also $\nu$ are trivial representations.
\\ \\
Finally, for any $\chi_t\in\widehat{\Sphere^1}$ and any $\upsilon\in \widehat{\UU(k)}$, we must analyze
\begin{equation}
\left(\bigoplus_{j_1,...j_m\in\Z_{\geq 0}}\chi_{(j_1,...,j_m+t)}\right)\otimes\left(\bigoplus_{r, s\in \Z_{\geq 0}}\chi_{r-s}\otimes\upsilon_{(r)}\otimes\upsilon_{(s)}\otimes \upsilon\right)\otimes \left(\bigoplus_{j\in\Z_{\geq 0}}\chi_j\otimes \eta_{(j)}\right).
\end{equation}
The sum $\bigoplus_{j_1,...j_m\in\Z_{\geq 0}}\chi_{(j_1,...,j_m+t)}$ is multiplicity free for all $t\in\Z$ and from Proposition \ref{teo ej viii}, the decomposition   $\left(\bigoplus_{r, s\in \Z_{\geq 0}}\chi_{r-s}\otimes\upsilon_{(r)}\otimes\upsilon_{(s)}\otimes \upsilon\right)\otimes \left(\bigoplus_{j\in\Z_{\geq 0}}\chi_j\otimes \eta_{(j)}\right)$ is multiplicity free for all $\upsilon\in \widehat{\UU(k)}$. Thus,  by Corollary \ref{Klambda con regulares}, we obtain  commutative triples when $\tau=\chi_t\otimes\upsilon$.
\end{proof}

\bigskip
With slight changes in the proof of the above proposition we can easily finish the case (VIII).

\begin{proposition}\label{teo 2 ej x}
 The triple $(K,N(\mathfrak{g},V),\tau)$ is commutative if and only if   $\tau=\chi_{t_1}\otimes...\otimes\chi_{t_\beta}\otimes \upsilon_1\otimes...\otimes\upsilon_\alpha$, where $\chi_{t_i}\in\widehat{\Sphere^1}$ ($t_i\in\Z$) for all $1\leq i\leq \beta$ and $\upsilon_j\in \widehat{\UU(k_j)}$ for all $1\leq j\leq \alpha$.
\end{proposition}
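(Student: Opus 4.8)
The plan is to reduce the general case (VIII) to the special case already handled in Proposition \ref{teo 1 ej x} by exploiting the product structure of both the group and the metaplectic representation. First I would observe that the nilpotent group $N(\mathfrak{g},V)$ is built from $\beta+\alpha$ ``blocks'': for $1\leq i\leq\beta$ the block $\mathfrak{su}(m_i)\oplus\mathfrak{c}_i$ acting on $V_i=\C^{m_i}$ is exactly a case-(VI) piece, while for $\beta+1\leq i\leq\beta+\alpha$ the block $\mathfrak{su}(2)\oplus\mathfrak{c}_i$ acting on $V_i=(\C^2)^{k_i}\oplus(\C^2)^{n_i}$ is exactly a case-(VII) piece. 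Accordingly, $K=G\times U$ decomposes as a direct product over these blocks, and the metaplectic representation $\omega$ on $\mathcal{P}(V)=\bigotimes_i\mathcal{P}(V_i)$ is the outer tensor product of the block metaplectic representations. Since $\mathfrak{g}$ is not semisimple, I would invoke Corollary \ref{Klambda con regulares} rather than Theorem \ref{torus}: because there is $X\in\mathfrak{g}'$ with $\pi(X)$ invertible (noted in the Remark after the classification), the triple $(K,N,\tau)$ is commutative if and only if $(\omega\otimes\tau)_{|_{T\times U}}$ is multiplicity free, where $T\subset G$ is a maximal torus, which here is a product of the maximal tori $\Toro^{m_i-1}$ of $\SU(m_i)$ and $\Toro^1$ of each $\SU(2)$.

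The key steps, in order, are as follows. Write a general $\tau\in\widehat{K}$ as $\tau=\big(\bigotimes_{i=1}^\beta \nu^{(i)}\big)\otimes\big(\bigotimes_{i=1}^\alpha \nu_{d_i}\big)\otimes\big(\bigotimes_{i=1}^\beta\chi_{t_i}\big)\otimes\big(\bigotimes_{i=1}^\alpha(\upsilon_i\otimes\eta_i)\big)$, where $\nu^{(i)}\in\widehat{\SU(m_i)}$, $\nu_{d_i}\in\widehat{\SU(2)}$, $\chi_{t_i}\in\widehat{\Sphere^1}$, $\upsilon_i\in\widehat{\UU(k_i)}$ and $\eta_i\in\widehat{\Sp(n_i)}$. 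Because both $\omega$ and $\tau_{|_{T\times U}}$ factor as tensor products over the $\beta+\alpha$ blocks, the restriction $(\omega\otimes\tau)_{|_{T\times U}}$ is multiplicity free if and only if each block factor is multiplicity free. For the semisimple factors this is precisely the content of the computations in Proposition \ref{teo 1 ej x} (and, for the $\SU(m_i)$ pieces, of the Cases (V)-(VI) analysis): each $\nu^{(i)}$, each $\nu_{d_i}$ and each $\eta_i$ must be trivial, while any $\chi_{t_i}$ and any $\upsilon_i$ are allowed. Conversely, when all the $\nu^{(i)}$, $\nu_{d_i}$, $\eta_i$ are trivial, each block factor reduces to the multiplicity-free sums already established in Proposition \ref{teo ej viii} and in the case-(VI) analysis, so their tensor product is again multiplicity free. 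Assembling the blocks yields exactly the claimed list $\tau=\chi_{t_1}\otimes\dots\otimes\chi_{t_\beta}\otimes\upsilon_1\otimes\dots\otimes\upsilon_\alpha$.

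The only subtlety --- and the ``hard part'' --- is justifying that multiplicity-freeness of a tensor product of representations of a direct product group is equivalent to multiplicity-freeness of each tensor factor. This is immediate in the abstract (the isotypic decomposition of an outer tensor product is the tensor product of the isotypic decompositions), but one must be careful that the ``mixed'' characters appearing in the block decompositions --- for instance the $\chi_{r-s}$ coming from the $\Toro^1$ inside $\SU(2)$ in a case-(VII) block, or the $\chi_{j_m+t}$ coming from the shared $\Sphere^1$ --- do not create cross-block collisions. In the case-(VIII) setup each $\mathfrak{c}_i$ has its own one-dimensional piece acting on a single $V_i$, so the torus characters attached to distinct blocks live on distinct circle factors of $T\times U$ and no cross-block interference occurs; this is why the argument localizes cleanly to each block. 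I would therefore spell out the product-group multiplicity-free criterion as the one genuinely new lemma, and then the rest of the proof is a bookkeeping assembly of Propositions \ref{teo ej viii} and \ref{teo 1 ej x}.
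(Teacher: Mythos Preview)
Your proposal is correct and follows essentially the same approach as the paper, which gives no detailed argument at all beyond the remark ``With slight changes in the proof of the above proposition we can easily finish the case (VIII).'' Your explicit factorization into $\beta+\alpha$ blocks, together with the observation that $(\omega\otimes\tau)_{|_{T\times U}}$ is an outer tensor product of the block representations (so that multiplicity-freeness is equivalent to multiplicity-freeness of each factor), is exactly the ``slight change'' the authors have in mind, and your invocation of Corollary~\ref{Klambda con regulares} in place of Theorem~\ref{torus} is the correct adjustment for the non-semisimple $\mathfrak{g}$.
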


%%%%%%%%%%%%%%%%%%%%%%%%%%%%%%%%%%%%%%%%%%%%%%SUMMARY%%%%%%%%%%%%%%%%%%%%%%%%%%%%%%%%%%%%%%%%%%%%%%%%%%%%%%%%%%%%%%%%%%%%%%%%%%

\subsection*{Summary}\label{section conclusion}

Putting together Proposition 1 to 9 we obtain the complete list of nontrivial commutative triples $(G\times U, N(\mathfrak{g},V),\tau)$ as in Theorem \ref{teo 1}:%ing that $N(\mathfrak{g},V)$ is indecomposable and has a square integrable representation:
\begin{itemize}
\item $\left(\SU(2)\times \Sp(n), N(\mathfrak{su}(2),(\mathbb{C}^2)^n),\tau\right)$, for all $\tau\in\widehat{\SU(2)}$ and for all $\tau\in\widehat{\Sp(n)}$ associated to a constant partition of length at most $n$, where $n\geq 1$,
\item $(\SU(n)\times \Sphere^1, N(\mathfrak{su}(n),\mathbb{C}^n),\tau)$, for all $\tau\in\widehat{\Sphere^1}$, where $n\geq 3$,
\item $(\SU(n)\times \Sphere^1, N(\mathfrak{u}(n),\mathbb{C}^n),\tau)$, for all $\tau\in\widehat{\Sphere^1}$, where $n\geq 3$,
\item $(\SU(2)\times \UU(k)\times \Sp(n), N(\mathfrak{u}(n),(\mathbb{C}^2)^k\oplus(\mathbb{C}^2)^n),\tau)$,   for all $\tau\in \widehat{\UU(k)}$, where $k\geq 1, n\geq 0$,
\item $(G\times U, N(\mathfrak{g}, V), \tau)$ where
\begin{align*}
& \mathfrak{g}=\mathfrak{su}(m_1)\oplus...\oplus\mathfrak{su}(m_\beta)\oplus\mathfrak{su}(2)\oplus...\oplus\mathfrak{su}(2)\oplus\mathfrak{c} \,
\text{ with } \mathfrak{c} \text{ an abelian component,}  
\\ & G=\SU(m_1)\times...\times \SU(m_\beta)\times \SU(2)\times...\times \SU(2) \ ,\\
& V= \C^{m_1}\oplus...\oplus \C^{m_\beta}\oplus\C^{2k_1+2n_1}\oplus...\oplus\C^{2k_\alpha+2n_\alpha} \text{ and }\\
& U=\Sphere^1\times...\times \Sphere^1\times \UU(k_1)\times \Sp(n_1)\times...\times \UU(k_\alpha)\times \Sp(n_\alpha) 
\end{align*}
for all $\tau\in\widehat{\Sphere^1}\otimes...\otimes \widehat{\Sphere^1}\otimes  \widehat{\UU(k_1)}\otimes...\otimes \widehat{\UU(k_\alpha)}$, where $m_j\geq 3$ for all $1\leq j\leq\beta$, $k_i\geq 1$, $n_i\geq 0$ for all $1\leq i\leq\alpha$ and
\item $(\UU(n), N(\R,\mathbb{C}^n),\tau)$, for all $\tau\in\widehat{\UU(n)}$, where $n\geq 1$ (proved by Yakimova in \cite{Yakimova}).
\end{itemize}

\begin{remark}
The Gelfand pair $(\UU(n), N(\R,\mathbb{C}^n))$ (where $N(\R,\mathbb{C}^n)$ is the Heisenberg group and $\UU(n)$ is its maximal group of orthogonal automorphisms) is the only strong Gelfand pair in this family. 
\end{remark}

\begin{remark}
Let $U$ be the connected component of the unitary intertwining operators of $(\pi,V)$. Assume that $U$ is a direct product of groups and $U(n)$ appears in $U$ for some $n\geq 1$.  For $\tau\in\widehat{U(n)}$ the triple $(G\times U, N(\mathfrak{g},V),\tau)$ is a commutative triple. (See cases (V) to (IX).)
\end{remark}

\begin{theorem}\label{main result} Let $N(\mathfrak{g},V)$ be indecomposable which has a square integrable representation. The nontrivial triple 
$(G\times U, N(\mathfrak{g},V),\tau)$ is commutative if and only if one of the following holds:
\begin{itemize}
\item[$(i)$] $U$ can be written as a direct product of groups $U=L\times M$ where $L:=\UU(n_1)\times ... \times \UU(n_m)$ (for some $m\geq 1$ and $n_i\geq 1$) and $\tau\in \widehat{L}$. %and is the maximal component of this form, that is the group $M$ is not a direct product between $\UU(n)$ (for some $n\geq 1$) and some other group. 
\item[$(ii)$] $N(\mathfrak{g},V)$ is the H-type group and $\tau$ is as in Proposition \ref{teo ej i}.
\end{itemize}  
% In particular, the Heisenberg group $N(\R,\mathbb{C}^n)$ together with its maximal group of orthogonal automorphisms $K=U=\UU(n)$ is the only strong Gelfand pair of this family of Gelfand pairs of the form $(G\times U, N(\mathfrak{g},V))$.
\end{theorem}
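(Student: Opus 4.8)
The plan is to derive Theorem~\ref{main result} as a repackaging of Theorem~\ref{teo 1}, i.e.\ of Propositions~\ref{teo ej i}--\ref{teo 2 ej x} together with Yakimova's result \cite{Yakimova}, organized according to Lauret's classification (I)--(IX). Throughout I would work, exactly as in the statement of Theorem~\ref{teo 1}, inside the family of those classified Gelfand pairs; this costs nothing for the present statement, because by \cite{Fulvio} the existence of a nontrivial commutative triple already forces $(K,N(\mathfrak{g},V))$ to be a Gelfand pair, hence to occur in the list. So the whole argument is a finite check over the nine items.

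The key structural observation I would record first is that in \emph{every} item of that list the connected intertwining group $U$ is a direct product whose factors are each isomorphic either to $\Sphere^1=\UU(1)$, to some $\UU(k)$, or to some $\Sp(n)$. Collecting the unitary-group factors into a subgroup $L$ and the symplectic factors into a subgroup $M$, one gets $U=L\times M$ with $L=\UU(n_1)\times\cdots\times\UU(n_m)$; moreover $L$ is nontrivial (that is, $m\geq 1$) precisely in cases (V), (VI), (VII), (VIII), (IX), and trivial in (I), (II), (III), (IV). Hence clause (i) of the theorem singles out exactly cases (V)--(IX), while clause (ii) singles out case (I), the H-type group there being $N(\mathfrak{su}(2),(\C^2)^n)$. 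These two possibilities are disjoint inside the list, since in case (I) one has $U=\Sp(n)$, which carries no unitary factor ($\Sp(n)$ is simple and is never isomorphic to a $\UU(k)$), so clause (ii) is genuinely needed and cannot be subsumed under clause (i).

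With this dictionary, both implications follow at once from the case-by-case results of Section~\ref{section case by case}. In case (I), Proposition~\ref{teo ej i} states that the commutative $\tau$ are exactly those in its statement, which is clause (ii). In cases (II), (III), (IV) the only commutative triple is the trivial one, in agreement with $L$ being trivial and $N(\mathfrak{g},V)$ not of type (I): no nontrivial triple occurs, and nothing has to be matched. In cases (V)--(IX), the corresponding propositions of Section~\ref{section case by case} and \cite{Yakimova} each assert that $(G\times U,N(\mathfrak{g},V),\tau)$ is commutative if and only if $\tau\in\widehat{L}$, which is clause (i); for the reverse implication one reads off from the structural observation which item one is in and applies the pertinent proposition to $\tau\in\widehat{L}$. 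I do not expect a genuine obstacle, since all the analytic content sits in the earlier propositions; the only point deserving care is the bookkeeping of the list, and in particular checking that the uniform answer $\tau\in\widehat{L}$ persists for the genuinely mixed product $U=\UU(k)\times\Sp(n)$ of case (VII) and the iterated products of case (VIII), where one must confirm that the non-unitary factor $M$ introduces no new multiplicity into the metaplectic picture.
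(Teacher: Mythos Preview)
Your proposal is correct and matches the paper's own treatment: the paper states Theorem~\ref{main result} immediately after the Summary section without a separate proof, precisely because it is the repackaging of Theorem~\ref{teo 1} (equivalently, of Propositions~1--9 and \cite{Yakimova}) that you describe. Your structural observation---that $U$ contains a nontrivial unitary-group factor exactly in cases (V)--(IX) and that the commutative $\tau$ there coincide with $\widehat{L}$, while cases (II)--(IV) admit only the trivial $\tau$ and case (I) is covered by Proposition~\ref{teo ej i}---is exactly the bookkeeping the paper leaves implicit.
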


%For future work we propose to find the spherical functions associated to each commutative triple, observing, in the first instance, relations with the spherical functions associated with the Gelfand pairs. ESTO ES  TRIVIAL PARA LOS CASOS (3) Y (4) Y EN EL (5) Y (6) LAS ESFÉRICAS "COINCIDEN"

%%%%%%%%%%%%%%%%%%%%%%%%%%%%%%%%%%%%%%%%%%%%%%%%%%%%%%%%%%%%%%%%%%%%%%%%%%%%%%%%%%%%%%%%%%%%%%%%%%%%%%%%%%
%%%%%%%%%%%%%%%%%%%%%%%%%%%%%%%%%%%%%%%%%%%%%%%%%%%%%%%%%%%%%%%%%%%%%%%%%%%%%%%%%%%%%%%%%%%%%%%%%%%%%%%%%

\end{document}